\documentclass[10pt]{amsart}

\usepackage{amsmath}
\usepackage{amsthm}
\usepackage{amssymb}
\usepackage[all]{xy}
\usepackage{color}
\usepackage{graphicx}

\theoremstyle{plain}
\newtheorem{lemma}[subsection]{Lemma}
\newtheorem{proposition}[subsection]{Proposition}
\newtheorem{theorem}[subsection]{Theorem}
\newtheorem{corollary}[subsection]{Corollary}
\theoremstyle{definition}
\newtheorem{definition}[subsection]{Definition}
\newtheorem{example}[subsection]{Example}
\newtheorem{remark}[subsection]{Remark}

\numberwithin{equation}{section}

\DeclareMathOperator*{\colim}{colim}
\DeclareMathOperator*{\hocolim}{hocolim}

\newcommand{\sA}{\mathsf A}
\newcommand{\B}{\mathfrak{B}}
\newcommand{\BrCat}{\text{$\Br$-$\Cat$}}
\newcommand{\BrCatB}{\text{$\Br$-$\Cat^{\B}$}}
\newcommand{\bld}{\mathbf}
\newcommand{\br}{\mathfrak{b}}
\newcommand{\co}{\colon\thinspace}
\newcommand{\down}{\!\downarrow\!}
\newcommand{\ev}{\mathrm{ev}}
\newcommand{\id}{\mathrm{id}}
\newcommand{\hB}{_{h\mathfrak{B}}}

\newcommand{\sM}{\mathsf M}
\newcommand{\op}{\mathrm{op}}
\newcommand{\ra}{\rightarrow}
\newcommand{\SB}{{\mathcal{S}^\mathfrak{B}}}

\newcommand{\xr}{\xrightarrow}
\newcommand{\xl}{\xleftarrow}

\newcommand{\Cat}{\mathit{Cat}}% The category of small categories 
% The operad that define permutative categories
\newcommand{\Sym}{\mathsf{Sym}}% The operad that define permutative categories
\newcommand{\Sp}{\mathit{Sp}}
\newcommand{\Br}{\mathsf{Br}}% The operad that define braided strict monoidal categories
% Small symmetric monoidal categories
% Small braided monoidal categories
%The strictification functor
\newcommand{\nerve}{\mathrm{N}}%The nerve functor

\newcommand{\BBox}{\mathrm{B}^{^{_\boxtimes}}}

\newcommand{\cA}{\mathcal A}
\newcommand{\cB}{\mathcal B}
\newcommand{\cC}{\mathcal C}
\newcommand{\cD}{\mathcal D}
\newcommand{\cE}{\mathcal E}
\newcommand{\cI}{\mathcal I}
\newcommand{\cK}{\mathcal K}
\newcommand{\cM}{\mathcal M}
\newcommand{\cP}{\mathcal P}
\newcommand{\cS}{\mathcal S}
\newcommand{\cV}{\mathcal V}
\newcommand{\cW}{\mathcal W}

\title{Braided injections and double loop spaces}

\date{\today}

\author{Christian Schlichtkrull} \address{Christian Schlichtkrull,
    Department of Mathematics, University of Bergen, P.O. Box 7800, N-5020 Bergen, Norway} \email{christian.schlichtkrull@math.uib.no}

\author{Mirjam Solberg} \address{Mirjam Solberg,
    Department of Mathematics, University of Bergen, P.O. Box 7800, N-5020 Bergen, Norway} \email{mirjam.solberg@math.uib.no}

\begin{document}
\begin{abstract}
We consider a framework for representing double loop spaces (and more generally $E_2$ spaces) as commutative monoids. There are analogous commutative rectifications of braided monoidal structures and we use this framework to define iterated double deloopings. We also consider commutative rectifications of $E_{\infty}$ spaces and symmetric monoidal categories and we relate this to the category of symmetric spectra. 
\end{abstract}

\subjclass[2000]{Primary 18D10, 18D50, 55P48; Secondary 55P43}
\keywords{Braided monoidal categories, double loop spaces, diagram spaces}
\maketitle

\section{Introduction}
The study of multiplicative structures on spaces has a long history in algebraic topology. For many spaces of interest the notion of a strictly associative and commutative multiplication is too rigid and must be replaced by the more flexible notion of an $E_{\infty}$ multiplication encoding higher homotopies between iterated products. This is analogous to the situation for categories where strictly commutative multiplications  rarely occur in practice and  the more useful $E_{\infty}$ notion is that of a symmetric monoidal structure. Similar remarks apply to multiplicative structures on other types of objects. However, for certain kinds of applications it is desirable to be able to replace $E_{\infty}$ structures by strictly commutative ones, and this can sometimes be achieved by modifying the underlying category of objects under consideration. An example of this is the introduction of modern categories of spectra (in the sense of stable homotopy theory) \cite{EKMM,HSS,MMSS} equipped with symmetric monoidal smash products. These categories of spectra have homotopy categories equivalent to the usual stable homotopy category but come with refined multiplicative structures allowing the rectification of $E_{\infty}$ ring spectra to strictly commutative ring spectra. This has proven useful for the import of ideas and constructions from commutative algebra into stable homotopy theory. Likewise there are symmetric monoidal refinements of spaces \cite{BCS,Sagave-Schlichtkrull} allowing for analogous rectifications of $E_{\infty}$ structures. 

Our main objective in this paper is to construct similar commutative rectifications in braided monoidal contexts. In order to provide a setting for this we introduce the category $\B$ of \emph{braided injections}, see Section \ref{sec:braided-injections}. This is a braided monoidal small category that relates to the category $\cI$ of finite sets and injections in the same way the braid groups relate to the symmetric groups. 
We first explain how our rectification works in the setting of small categories $\Cat$ and let $\Br$-$\Cat$ denote the category of braided (strict) monoidal small categories.
Let $\Cat^{\B}$ be the diagram category of functors from $\B$ to $\Cat$ and let us refer to such functors as 
\emph{$\B$-categories}. The category $\Cat^{\B}$ inherits a braided monoidal convolution product from $\B$ and there is a corresponding category $\Br$-$\Cat^{\B}$ of braided monoidal $\B$-categories. A morphism $A\to A'$ in $\Br$-$\Cat^{\B}$ is said to be a \emph{$\B$-equivalence} if the induced functor of Grothendieck constructions $\B\!\int\!A\to \B\!\int\!A'$ is a weak equivalence of categories in the usual sense. We write $w_{\B}$ for the class of $\B$-equivalences and $w$ for the class of morphisms in $\Br$-$\Cat$ whose underlying functors are weak equivalences. The following rectification theorem is obtained by combining Proposition~\ref{prop:Bint-Delta--braided-equivalence} and Theorem~\ref{thm:Braided-B-Cat-rectification}. 

\begin{theorem}\label{thm:main-theorem-1}
The Grothendieck construction $\B\!\int\!$ and the constant embedding $\Delta$ define an equivalence of the localized categories
\[
\textstyle\B\!\int\colon \text{$\Br$-$\Cat^{\B}$}[w_{\B}^{-1}]\simeq
\text{$\Br$-$\Cat$}[w^{-1}]:\!\Delta
\]
and every object in $\Br$-$\Cat^{\B}$ is naturally $\B$-equivalent to a strictly commutative $\B$-category monoid. 
\end{theorem}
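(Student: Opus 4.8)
The plan is to establish the two assertions separately: the equivalence of localized categories (Proposition~\ref{prop:Bint-Delta--braided-equivalence}) and the commutative rectification (Theorem~\ref{thm:Braided-B-Cat-rectification}).

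For the equivalence of localized categories I would first pin down the two functors. The constant embedding $\Delta$ sends a braided monoidal category $\cC$ to the constant diagram $\B\to\Cat$ with value $\cC$; this is a braided monoidal $\B$-category because $\Delta$ is lax monoidal, being right adjoint to the strong monoidal functor $\colim\colon(\Cat^{\B},\boxtimes)\to(\Cat,\times)$ induced by $\B\to\ast$. The Grothendieck construction $\B\!\int\!A$ of a braided monoidal $\B$-category $A$ carries a braided monoidal structure in which $(\bld m,a)\otimes(\bld n,b)=(\bld m\sqcup\bld n,\mu_{\bld m,\bld n}(a,b))$, assembled from the braided monoidal structures of $\B$ and of $A$. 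There are then natural braided monoidal comparison functors $A\to\Delta\B\!\int\!A$, sending $a\in A(\bld n)$ to $(\bld n,a)$, and $\B\!\int\!\Delta\cC\to\cC$, the latter being literally the projection $\B\times\cC\to\cC$ since $\B\!\int\!$ of a constant diagram is a product. The geometric input is that $\nerve\B$ is contractible --- for instance because $\bld 0$ is an initial object of $\B$, just as in $\cI$ --- so this projection is a weak equivalence of categories and hence lies in $w$. Applying $\B\!\int\!$ to $A\to\Delta\B\!\int\!A$ and postcomposing with the projection $\B\times\B\!\int\!A\to\B\!\int\!A$ gives the identity of $\B\!\int\!A$, so by the two-out-of-three property the functor $\B\!\int\!A\to\B\!\int\!\Delta\B\!\int\!A$ is a weak equivalence, i.e.\ $A\to\Delta\B\!\int\!A$ lies in $w_{\B}$. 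Since $\B\!\int\!$ carries $w_{\B}$ into $w$ by the very definition of a $\B$-equivalence, and $\Delta$ carries $w$ into $w_{\B}$ (again because $\B\!\int\!\Delta(-)\cong\B\times(-)$), both functors descend to the localizations, where the two comparisons become natural isomorphisms; this yields the stated equivalence.

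For the commutative rectification I would argue model-categorically. The starting point is a positive $\B$-model structure on $\Cat^{\B}$ --- the braided analogue of the positive $\cI$-model structure on $\cI$-spaces --- obtained by localizing a level model structure so that its weak equivalences are the $\B$-equivalences, and arranged to be monoidal for the convolution product $\boxtimes$. One then lifts this model structure along the forgetful functors to braided monoidal $\B$-categories and to strictly commutative $\B$-category monoids. The heart of the matter, and the step I expect to be the main obstacle, is a homotopical analysis of the convolution powers: for $X$ cofibrant in the positive $\B$-model structure, the canonical map $E\Sigma_n\times_{\Sigma_n}X^{\boxtimes n}\to X^{\boxtimes n}/\Sigma_n$ from homotopy orbits to strict orbits must be a $\B$-equivalence. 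This is the braided counterpart of the analogous statement for $\cI$-spaces (and of flatness and semistability phenomena for symmetric spectra), and it is precisely what positivity is designed to deliver: after applying $\B\!\int\!$, the relevant symmetric-group actions on convolution powers become free up to homotopy because the braid groups supply enough room. Granting this, the lifted model structure on strictly commutative $\B$-category monoids exists, its cofibrant objects have the expected underlying homotopy type, and the free commutative monoid functor is homotopically well behaved on cofibrant input. Finally, because braidings are already built into the morphisms of $\B$ --- just as $E_{\infty}$ structures are absorbed by $\cI$ --- the forgetful functor from strictly commutative $\B$-category monoids to braided monoidal $\B$-categories is (the right adjoint in) a Quillen equivalence; tracing a fibrant--cofibrant replacement through it produces, functorially in $A$, a natural zig-zag of $\B$-equivalences from $A$ to a strictly commutative $\B$-category monoid. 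Combining the two halves gives Theorem~\ref{thm:main-theorem-1}.
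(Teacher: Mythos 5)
Your proposal diverges from the paper in both halves, and in each case there is a genuine obstruction.

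\textbf{The localization equivalence.} You assert a natural comparison map of $\B$-categories $A\to\Delta(\B\!\int\!A)$ sending $a\in A(\bld n)$ to $(\bld n,a)$. This is not a morphism in $\Cat^{\B}$: for $\alpha\colon\bld m\to\bld n$ the square
\[
\xymatrix{
A(\bld m)\ar[r]\ar[d]_{A(\alpha)} & \B\!\int\!A \ar@{=}[d]\\
A(\bld n)\ar[r] & \B\!\int\!A
}
\]
sends $a$ to $(\bld m,a)$ along the top and to $(\bld n,A(\alpha)(a))$ along the bottom, and these are different objects of $\B\!\int\!A$ whenever $\alpha$ is not an identity. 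They are connected by the morphism $(\alpha,\id)$, so what you have is a pseudo-natural transformation, not a strict one; but a map of $\B$-categories is a strict natural transformation of functors $\B\to\Cat$. Everything downstream in your first paragraph (applying $\B\!\int$, invoking two-out-of-three, descending to localizations) depends on this map existing. The paper circumvents the problem with the bar resolution $\overline A(\bld n)=(\B\down\bld n)\!\int\!A\circ\pi_n$, which gives a zig-zag of genuine morphisms $A\xleftarrow{\ev}\overline A\xrightarrow{\pi}\Delta(\B\!\int\!A)$ in $\Cat^{\B}$ (Lemmas~\ref{lem:ev-equivalence} and \ref{lem:pi-equivalence}), and then checks that $\overline{(\,\cdot\,)}$, $\ev$, and $\pi$ lift to $\Br$-$\Cat^{\B}$ (Lemma~\ref{lem:B-down-bullet-braided}). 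The rest of your outline (identifying $\B\!\int\!\Delta\cC\cong\B\times\cC$, using contractibility of $\nerve\B$, applying a general localization lemma) matches the paper's argument, but it cannot start without that rigidification step.

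\textbf{The commutative rectification.} Here your route is fundamentally different from the paper's and, as proposed, does not go through. Two problems. First, the $\boxtimes$-product on $\Cat^{\B}$ is braided but not symmetric, so $\Sigma_n$ does not act on $X^{\boxtimes n}$; only the braid group $\cB_n$ does (this is precisely why Section~\ref{subsec:operads} is forced to pass to \emph{braided} operads). The comparison map $E\Sigma_n\times_{\Sigma_n}X^{\boxtimes n}\to X^{\boxtimes n}/\Sigma_n$ that you place at ``the heart of the matter'' therefore does not even typecheck in this setting. Second, even setting that aside, Remark~\ref{rem:sagave-schlichtkrull-remark} in the paper explicitly flags that $\B$ is \emph{not} a well-structured index category (the comma categories $(\bld k\sqcup-\!\downarrow\!\bld l)$ lack terminal objects), and that it is unclear whether the Sagave--Schlichtkrull machinery for lifting model structures to (commutative) monoids carries over. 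Your proposal assumes exactly the lifting that the paper says is in doubt. The paper instead gives an explicit, model-structure-free rectification: the functor $\Phi\colon\BrCat\to\cC(\Cat^{\B})$ of Section~\ref{subsec:rectification} (a reinterpretation of the monoidal strictification of Joyal--Street to land in commutative $\B$-category monoids), the verification that $\Phi$ is valued in commutative monoids, Proposition~\ref{prop:Phi-Bint-equivalence} showing $\B\!\int\!\Phi(\cA)\simeq\cA$, and then the chain
\[
A\simeq\Delta(\B\!\int\!A)\simeq\Delta\bigl(\B\!\int\!\Phi(\B\!\int\!A)\bigr)\simeq\Phi(\B\!\int\!A)
\]
of Theorem~\ref{thm:Braided-B-Cat-rectification}. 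If you want a model-categorical proof, you would first have to establish exactly the structures that the paper says it cannot, and you would have to rephrase the homotopy-orbit comparison in terms of braid groups rather than symmetric groups; as written the plan has a genuine gap at its central step.
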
  
Thus, working with braided monoidal categories is weakly equivalent to working with braided monoidal $\B$-categories and the latter category has the advantage that we may assume multiplications to be strictly commutative. 
This implies in particular that every braided monoidal small category is weakly equivalent to one of the form $\B\!\int\!A$ for a commutative $\B$-category monoid $A$.

Let $\Br$ be the categorical operad such that the category of $\Br$-algebras can be identified with $\Br$-$\Cat$ (see Section~\ref{subsec:operads} for details). For the analogous rectification in the category of spaces $\cS$ (which we interpret as the category of simplicial sets) we consider the operad $\nerve \Br$ in $\cS$ obtained by evaluating the nerve of $\Br$. This is an $E_2$ operad in the sense of being equivalent to the little 2-cubes operad and we may think of the category of algebras $\nerve\Br$-$\cS$ as the category of $E_2$ spaces. In order to rectify $E_2$ spaces to strictly commutative monoids we work in the diagram category of $\B$-spaces $\cS^{\B}$ equipped with the braided monoidal convolution product inherited from $\B$. There is an analogous category of $E_2$ $\B$-spaces $\nerve\Br$-$\cS^{\B}$. After localization with respect to the appropriate classes of $\B$-equivalences $w_{\B}$ in \mbox{$\nerve\Br$-$\cS^{\B}$} and weak equivalences $w$ in  
$\nerve\Br$-$\cS$, Proposition~\ref{prop:hocolim-B-equivalence} and Theorem~\ref{thm:Br-SB-rectification} combine to give the following result.

\begin{theorem}\label{thm:main-theorem-2}
The homotopy colimit $(-)_{h\B}$ and the constant embedding $\Delta$ define an equivalence of the localized categories
\[
(-)_{h\B}\colon \text{$\nerve\Br$-$\cS^{\B}$}[w_{\B}^{-1}]\simeq
\text{$\nerve\Br$-$\cS$}[w^{-1}] :\!\Delta
\]
and every object in $\nerve\Br$-$\cS^{\B}$ is naturally $\B$-equivalent to a strictly commutative $\B$-space monoid. 
\end{theorem}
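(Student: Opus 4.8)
I would prove the two assertions separately, in parallel with the categorical Theorem~\ref{thm:main-theorem-1}: the equivalence of localized categories is Proposition~\ref{prop:hocolim-B-equivalence}, and the strictification statement is Theorem~\ref{thm:Br-SB-rectification}. The dictionary is that one replaces the Grothendieck construction $\B\!\int\!$ by the homotopy colimit $(-)_{h\B}$, which on a $\B$-space $X$ is the nerve of $\B\!\int\!X$; with this dictionary the two statements mirror the categorical ones.

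For the equivalence of localizations, I would first check that the functors descend: $(-)_{h\B}$ is homotopy invariant and, since a $\B$-equivalence is by definition a map inducing a weak equivalence of Grothendieck constructions, it carries $w_{\B}$ into $w$; and $\Delta$ turns a weak equivalence into a levelwise equivalence, hence into a $\B$-equivalence. Next I would produce the derived unit and counit of the adjunction $(-)_{h\B}\dashv\Delta$. The counit $(\Delta A)_{h\B}\to A$ is naturally identified with the projection $\nerve\B\times A\to A$, which is a weak equivalence because $\nerve\B$ is contractible --- a basic property of the braided-injections category (Section~\ref{sec:braided-injections}), and exactly what makes the categorical statement work. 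For the unit $X\to\Delta(X_{h\B})$, applying $(-)_{h\B}$ and composing with the counit at $X_{h\B}$ gives the identity by the triangle identity, and since $(\Delta(X_{h\B}))_{h\B}\cong\nerve\B\times X_{h\B}\to X_{h\B}$ is an equivalence, two-out-of-three shows $X\to\Delta(X_{h\B})$ is a $\B$-equivalence. The one point beyond the underlying-space statement is to carry the $\nerve\Br$-algebra structures along: $(-)_{h\B}$ is lax monoidal for the braided convolution and cartesian products via the map $X_{h\B}\times Y_{h\B}=\hocolim_{\B\times\B}(X\times Y)\to\hocolim_{\B}(X\boxtimes Y)$ induced by the monoidal product of $\B$, and $\Delta$ is lax monoidal via the canonical map $\Delta A\boxtimes\Delta B\to\Delta(A\times B)$; these lax structures are compatible with the braid-operad structure --- the comparison maps being built from the braided monoidal structure of $\B$ --- so both functors lift to the categories of $\nerve\Br$-algebras, the unit and counit lift to maps of $\nerve\Br$-algebras, and the argument above applies there verbatim.

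For the strictification I would follow the diagram-space strategy used for $E_{\infty}$ structures on $\cI$-spaces, adapted to the braided setting. Concretely: equip $\cS^{\B}$ with a positive $\B$-model structure --- a left Bousfield localization of a positive projective structure at the $\B$-equivalences --- whose cofibrant objects $X$ have convolution powers $X^{\boxtimes n}$ with homotopically free braid-group actions, so that passing to strict orbits computes the homotopy orbits after $(-)_{h\B}$; transport this along the relevant free--forgetful adjunctions to model structures on $\nerve\Br$-$\cS^{\B}$ and on the category of commutative $\B$-space monoids; and prove that the canonical adjunction between commutative $\B$-space monoids and $\nerve\Br$-$\cS^{\B}$ is a Quillen equivalence. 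Deriving the comparison together with cofibrant replacement then gives, functorially in $X$, a natural chain of $\B$-equivalences from $X$ to a strictly commutative $\B$-space monoid; alternatively one can exhibit such a chain directly by a suitable functorial bar construction with respect to $\boxtimes$.

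The main obstacle is this last Quillen equivalence, which is the heart of Theorem~\ref{thm:Br-SB-rectification}. Unlike the $\cI$-space situation --- where $\nerve\Br$ would be replaced by an $E_{\infty}$ operad mapping by a weak equivalence to the commutative operad, so that the rectification is essentially formal --- here $\nerve\Br$ is only an $E_2$ operad, and the comparison is not forced operad-theoretically: one must show that over $\B$ no homotopical information is lost in passing from an $E_2$-structure to a strictly commutative multiplication, i.e.\ that the orbit constructions applied to convolution powers of positively cofibrant objects are $\B$-equivalences. This is where the geometry of the braided-injections category is decisive: because $\B$ is braided and not symmetric, the homotopy orbits over $\B$ of convolution powers of representable $\B$-spaces realize the braid-group classifying spaces --- equivalently the $E_2$ configuration-space data --- so that a strictly commutative $\B$-space monoid models precisely an $E_2$-space and nothing stronger. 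Carrying out the braid-group (rather than symmetric-group) bookkeeping in the braided Day convolution, and verifying that the positive $\B$-model structure interacts well enough with $\boxtimes$ for the relevant orbit maps to be $\B$-equivalences, is the delicate part; granting that, the remainder is formal and follows the template already in place for the categorical Theorem~\ref{thm:main-theorem-1}.
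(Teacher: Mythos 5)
Your plan for the first half has the right shape but the wrong mechanism. The Bousfield--Kan homotopy colimit $(-)\hB$ is \emph{not} left adjoint to $\Delta$ (the ordinary colimit is), so there is no unit $X\to\Delta(X\hB)$ and no triangle identity to invoke: the canonical maps $X(\bld n)\to X\hB$ are not strictly natural in $\bld n$. The paper replaces this by the bar resolution $\overline X(\bld n)=\hocolim_{(\B\downarrow\bld n)}X\circ\pi_{\bld n}$, which supplies a genuine zig-zag of strict maps $X\xleftarrow{\ \ev\ }\overline X\xrightarrow{\ \pi\ }\Delta(X\hB)$ of $\B$-equivalences, and then applies Proposition~\ref{prop:localization-equivalence}, which needs only chains of natural transformations in the weak equivalences, not an adjunction. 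You also gloss over the fact that $(-)\hB$ must first be shown to land in $\nerve\Br$-$\cS$ at all: the operad acting on $\B$-spaces is the \emph{braided} operad $\nerve\Br$ (acting through $\boxtimes$-powers modulo $\cB_k$), whereas the target carries the unbraided operad, and the passage between them is the debraiding construction of Lemmas~\ref{lem:hocolim-promoted} and \ref{lem:hocolim-Br-promoted} --- more than ``lax monoidality compatible with the operad structure.''

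The genuine gap is in the rectification half. You propose lifting a positive model structure to commutative $\B$-space monoids and proving a Quillen equivalence with $\nerve\Br$-$\cS^{\B}$. The paper explicitly warns (Remark~\ref{rem:sagave-schlichtkrull-remark}) that it is not clear the diagram-space machinery can be generalized to produce model structures on monoids and commutative monoids in $\B$-spaces, precisely because $\B$ is not a well-structured index category (the comma categories $(\bld k\sqcup-\downarrow\bld l)$ lack terminal objects); and your paragraph on why the braid-group orbit maps should be $\B$-equivalences is a heuristic, not an argument. The paper's actual proof of Theorem~\ref{thm:Br-SB-rectification} avoids this entirely: lacking a space-level rectification functor, it descends to $\Cat$ via the Fiedorowicz--Stelzer--Vogt functor $F\colon\text{$\nerve\Br$-$\cS$}\to\BrCat$, applies the categorical rectification $\Phi$, and takes levelwise nerves, obtaining the commutative $\B$-space monoid $\nerve\Phi(F(A\hB))$ together with the chain of $\B$-equivalences $A\simeq\Delta(A\hB)\simeq\Delta\bigl(\nerve\Phi(F(A\hB))\hB\bigr)\simeq\nerve\Phi(F(A\hB))$ assembled from Propositions~\ref{prop:Phi-Bint-equivalence}, \ref{prop:Thomason-diagram} and \ref{prop:hocolim-B-equivalence}. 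Without either carrying out the model-categorical program (which this paper does not know how to do) or supplying this categorical detour, the second assertion of the theorem remains unproved.
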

This implies in particular that every double loop space is equivalent to an $E_2$ space of the form $A_{h\B}$ for a commutative $\B$-space monoid $A$. To give an example why this may be useful, notice that if $A$ is a commutative $\B$-space monoid, then the category $\cS^{\B}/A$ of $\B$-spaces over $A$ inherits the structure of a braided monoidal category. It is less obvious how to define such a structure for the corresponding category of spaces over an $E_2$ space.

The above rectification theorems have corresponding versions for symmetric monoidal categories and $E_{\infty}$ spaces that we spell out in Section~\ref{sec:I-section}. As an application of this we show how to rectify certain $E_{\infty}$ ring spectra to strictly commutative symmetric ring spectra. However, the braided monoidal setting is somewhat more subtle and is the main focus of this paper.

Our main tool for replacing braided monoidal structures by strictly commutative ones is a refinement of the usual strictification construction used to replace monoidal categories by strictly monoidal ones, see e.g.\ \cite[Section~1]{JS}. While it is well-known that this construction cannot be used to turn braided monoidal categories into categories with a strictly commutative multiplication, we shall see that it can be reinterpreted so as to take values in commutative $\B$-category monoids instead. This gives rise to the \emph{$\B$-category rectification functor} $\Phi$ introduced in Section~\ref{subsec:rectification}. In order to obtain an analogous rectification on the space level we apply the results of Fiedorowicz-Stelzer-Vogt~\cite{Fiedorowicz-V_simplicial,FSV} that show how to associate braided monoidal categories to $E_2$ spaces. Our rectification functor $\Phi$ then applies to these braided monoidal categories and we can apply the nerve functor level-wise to get back into the category of commutative $\B$-space monoids. 

It was pointed out by Stasheff and proved by Fiedorowicz~\cite{fied} and Berger \cite{Berger_double} that the classifying space of a braided monoidal small category becomes a double loop space after group completion. As an application of our techniques we show in Section~\ref{sec:classifying} how one can very simply define the double delooping: Given a braided monoidal category $\cA$, we apply the rectification functor $\Phi$ and the level-wise nerve to get a commutative $\B$-space monoid $\nerve\Phi(\cA)$. The basic fact (valid for any commutative monoid in a braided monoidal category whose unit is terminal) is now that the bar construction applied to $\nerve\Phi(\cA)$ is a simplicial monoid and hence can be iterated once to give a bisimplicial $\B$-space. Evaluating the homotopy colimit of this $\B$-space we get the double delooping. This construction in fact gives an alternative proof of Stasheff's result independent of the operadic recognition theorem for double loop spaces. 

Another ingredient of our work is a general procedure for constructing equivalences between localized categories that we detail in Appendix~\ref{app:localization}. This improves on previous work by 
Fiedorowicz-Stelzer-Vogt~\cite[Appendix~C]{FSV} and has subsequently been used by these authors in \cite{FSV2} to sharpen some of their earlier results.

\subsection{Organization}
We begin by introducing the category of braided injections in Section~\ref{sec:braided-injections} and establish the basic homotopy theory of $\B$-spaces in Section~\ref{sec:homotopy-B-spaces}. Then we switch to the categorical setting in 
Section~\ref{sec:B-categories} where we prove Theorem~\ref{thm:main-theorem-1}. In Section~\ref{sec:E2-spaces} we return to the analysis of $\B$-spaces and prove Theorem~\ref{thm:main-theorem-2}, whereas Section~\ref{sec:classifying} is dedicated to double deloopings of commutative $\B$-space monoids. Finally, we consider the symmetric monoidal version of the theory and relate this to the category of symmetric spectra in Section~\ref{sec:I-section}. The material on localizations of categories needed for the paper is collected in Appendix~\ref{app:localization}. 

\section{The category of braided injections}\label{sec:braided-injections}

We generalize the geometric definition of the braid groups by introducing the notion of a \emph{braided injection}. In this way we obtain a category $\B$ of braided injections such that the classical braid groups appear as the endomorphism monoids. 

In the following we write $I$ for the unit interval. Let $\mathbf{n}$ denote the ordered set $\{1,\ldots,n\}$ for $n\geq 1$. A braided injection $\alpha$ from 
$\mathbf{m}$ to $\mathbf{n}$, written $\alpha\colon \bld m\to\bld n$,  
is a homotopy class of $m$-tuples $(\alpha_1,\ldots,\alpha_m)$, where each $\alpha_i$ is a path 
$\alpha_i \co I \rightarrow \mathbb{R}^{2}$ starting in $(i,0)$ and ending in one of the points 
$(1,0),...,(n,0)$ with the requirement that $\alpha_i(t)\neq \alpha_j(t)$ for all $t$ in $I$, whenever $i\neq j$. 
Two $m$-tuples $(\alpha_1,\ldots,\alpha_m)$ and $(\beta_1,\ldots,\beta_m)$ are homotopic if 
there exists an $m$-tuple of homotopies $H_i \co I \times I \rightarrow \mathbb{R}^2$ 
from $\alpha_i$ to $\beta_i$, fixing endpoints, 
such that $H_i(s,t) \neq H_j(s,t)$ for all $(s,t)$ in $I\times I$ whenever $i\neq j$. 
The requirement that $H_i$ fixes endpoints ensures that a braided injection $\alpha$ from $\bld m$ to 
$\bld n$ defines an underlying injective function $\bar\alpha\colon \bld m\to \bld n$ by writing 
$\alpha_i(1)=(\bar\alpha(i),0)$. When visualising an injective braid, we think of the points $\alpha_i(t)$ for $i=1,\dots,m$ as a family of distinct points in $\mathbb{R}^2$ moving downwards from the initial position $(1,0)$, \dots, $(m,0)$, for $t=0$, to the final position $(\bar\alpha(1),0)$,\dots, 
$(\bar\alpha(m),0)$, for $t=1$.

\begin{figure}[ht]
\centering
\includegraphics[height=3cm]{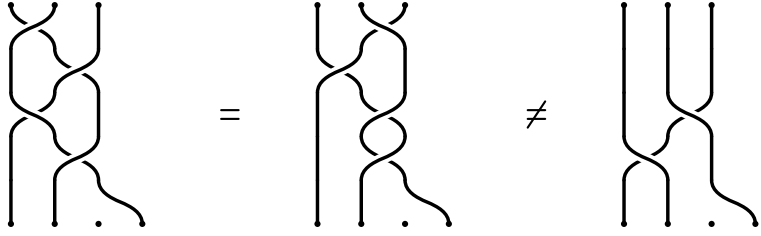} 
\caption{Braided injections with the same underlying injective map: $1 \mapsto 2$, $2 \mapsto 4$, $3 \mapsto 1$.}
\label{injective braids}
\end{figure}

We can compose two braided injections $\alpha\co \mathbf{m}\ra \mathbf{n}$ and 
$\beta\co\mathbf{n}\ra \mathbf{p}$ by choosing representatives 
$(\alpha_1,\ldots,\alpha_m)$ and $(\beta_1,\ldots,\beta_n)$, 
and set $\beta\circ\alpha$ to be the homotopy class of the  paths
$$(\beta_{\bar\alpha(1)}\cdot\alpha_1,...,\beta_{\bar\alpha(m)}\cdot\alpha_m)\text{.}$$ 
Here $\beta_{\bar\alpha(i)}\cdot\alpha_i$ denotes the usual composition of paths,
\[
\beta_{\bar\alpha(i)}\cdot\alpha_i(t)=
\begin{cases}
\alpha_i(2t), &\text{for $0\leq t\leq 1/2$},\\
\beta_{\bar\alpha(i)}(2t-1),& \text{for $1/2\leq t\leq1$}.
\end{cases}
\]
We let $\mathbf{0}$ denote the empty set and say that there is exactly one braided injection from 
$\mathbf{0}$ to $\mathbf{n}$ for $n\geq 0$.

\begin{definition}\label{def B}
The category $\B$ of braided injections has objects the finite sets $\bld n$ for $n\geq 0$ and morphisms the 
braided injections between these sets.
\end{definition}

Next we recall the definitions of some categories closely related to $\mathfrak{B}$.

\begin{definition}\label{def B S I M}
The categories $\mathcal{B}$, $\Sigma$, $\mathcal{I}$ and $\mathcal{M}$ all have as objects the finite sets 
$\mathbf{n}$ for $n\geq 0$. Here the \emph{braid category} 
$\mathcal{B}$ and the \emph{permutation category} $\Sigma$ have respectively the braid group 
$\mathcal{B}_n$ and the permutation group $\Sigma_n$ as the endomorphism set of $\mathbf{n}$, and no other morphisms. 
The morphisms in $\mathcal{I}$ and $\mathcal{M}$ are the injective functions and
the order preserving injective functions, respectively. 
\end{definition}

There is a canonical functor $\varPi$ from $\mathfrak{B}$ to $\mathcal{I}$ 
that takes a braided injection $\alpha\co\bld m \ra \bld n$ to the underlying injective 
function $\bar\alpha\co \bld m\ra\bld n$.
By definition, $\mathcal{B}$ is a subcategory of $\mathfrak{B}$ and 
$\Sigma$ is a subcategory of $\mathcal{I}$. Clearly $\varPi$ restricts to a functor from 
$\mathcal{B}$ to $\Sigma$, which we also denote by $\varPi$. 
The category $\mathcal{M}$ is a subcategory of $\mathcal{I}$ and there is a canonical embedding 
$\Upsilon\co\mathcal{M} \ra\mathfrak{B}$ with 
$\Upsilon(\mathbf{n})=\mathbf{n}$. For an injective order preserving function 
$\mu\co\mathbf{m}\ra\mathbf{n}$, let $\mu_i$  be the straight path from 
$(i,0)$ to $(\mu(i),0)$ for $1\leq i \leq m$.
Since $\mu$ is order preserving, $\mu_i(t)$ is different from $\mu_j(t)$ whenever $i\neq j$, and we can define $\Upsilon(\mu)$ as the braided injection represented by the tuple 
$(\mu_1,\ldots,\mu_m)$. 
 These functors fit into the following commutative diagram
\begin{equation}\label{diagram B B Sigma I M}
\xymatrix{
\mathcal{B} \ar@{}[r]|\subseteq \ar[d]_\varPi & \mathfrak{B} \ar[d]_\varPi \\
\Sigma \ar@{}[r]|\subseteq & \mathcal{I} & \mathcal{M}. \ar[lu]_\Upsilon
\ar@{}[l]|\supseteq 
} \end{equation}

The categories $\cB$, $\Sigma$, $\cI$ and $\cM$ are all monoidal categories with monoidal product 
$\sqcup$ given on objects by $\bld m\sqcup\bld n=\bld{m\!+\!n}$. In addition, $\cB$ is braided 
monoidal and $\Sigma$ and $\cI$ are symmetric monoidal. We will extend these monoidal structures 
to a braided monoidal structure on $\B$ such that all functors in the diagram are strict monoidal 
functors and functors between braided monoidal categories are braided strict monoidal functors. 
In order to do this, we will show that every morphism in $\B$ can be uniquely written in terms 
of a braid and a morphism in $\cM$.

\begin{lemma}\label{lemma unique decomposition}
Every braided injection $\alpha\colon \bld m\to\bld n$ can be written uniquely as a composition 
$\alpha=\Upsilon(\mu)\circ\zeta$ with $\mu$ in $\mathcal M(\bld m,\bld n)$ and $\zeta$ in the braid group $\mathcal B_m$.
\end{lemma}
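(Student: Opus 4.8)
The plan is to reduce the statement to the classical fact that an injective function factors uniquely as an order preserving injection followed by a permutation, and then to lift that factorization path by path. Concretely: any injective $\bar\alpha\co\bld m\ra\bld n$ factors uniquely as $\bar\alpha=\mu\circ\sigma$ with $\sigma\in\Sigma_m$ and $\mu\in\mathcal{M}(\bld m,\bld n)$ the order preserving injection whose image equals the image of $\bar\alpha$. Applying $\varPi$ to any decomposition $\alpha=\Upsilon(\mu')\circ\zeta$ as in the lemma yields $\bar\alpha=\mu'\circ\bar\zeta$ with $\bar\zeta$ a bijection, which forces $\mu'=\mu$ and $\bar\zeta=\sigma$; so $\mu$ is already pinned down and the task is to show that there is exactly one $\zeta\in\mathcal{B}_m$ with $\bar\zeta=\sigma$ and $\Upsilon(\mu)\circ\zeta=\alpha$. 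Throughout I write $\mu_1,\dots,\mu_n$ for the straight paths defining $\Upsilon(\mu)$; the one property of them I use repeatedly is that $\mu_a(t)\neq\mu_b(t)$ for all $t$ whenever $a\neq b$.

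For existence I would fix a representing tuple $(\alpha_1,\dots,\alpha_m)$ of $\alpha$ and let $\zeta_i$ be $\alpha_i$ followed by the reversed path $\overline{\mu_{\sigma(i)}}$, a path from $(i,0)$ to $(\sigma(i),0)$. The tuple $(\zeta_i)$ is collision free: on its first half it is the collision free tuple $(\alpha_i)$, and on its second half the $i$-th and $j$-th strands are $\mu_{\sigma(i)}$ and $\mu_{\sigma(j)}$ run backwards at a common parameter, hence distinct. So $\zeta\in\mathcal{B}_m$ with $\bar\zeta=\sigma$, and a variant of the same estimate shows that the homotopy class of $\zeta$ depends only on $\alpha$, not on the chosen representative. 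The $i$-th strand of $\Upsilon(\mu)\circ\zeta$ is then $\alpha_i$ followed by $\mu_{\sigma(i)}$ traversed backwards and then forwards; contracting this backtrack \emph{along} the path $\mu_{\sigma(i)}$, so that at every stage of the contraction each strand is a $\mu$-path evaluated at a single common parameter, produces an endpoint fixing homotopy to $(\alpha_i)$ that is collision free by the same property of $\Upsilon(\mu)$. Hence $\Upsilon(\mu)\circ\zeta=\alpha$.

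For uniqueness, suppose $\zeta'\in\mathcal{B}_m$ also satisfies $\Upsilon(\mu)\circ\zeta'=\alpha$, with a chosen representative $(\zeta'_i)$, so $\overline{\zeta'}=\sigma$. The $i$-th strand of $\Upsilon(\mu)\circ\zeta'$ is $\zeta'_i$ followed by $\mu_{\sigma(i)}$, so a collision free homotopy rel endpoints joins $(\zeta'_i\text{ followed by }\mu_{\sigma(i)})_i$ to $(\alpha_i)_i$; appending the fixed reversed path $\overline{\mu_{\sigma(i)}}$ to every path of this homotopy keeps it collision free rel endpoints—the appended strands are $\mu$-paths at a common parameter, lying in a parameter range disjoint from the rest—and identifies $(\zeta'_i\text{ followed by }\mu_{\sigma(i)}\text{ then }\overline{\mu_{\sigma(i)}})_i$ with the representative $(\alpha_i\text{ followed by }\overline{\mu_{\sigma(i)}})_i$ of $\zeta$ from the existence step. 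But the former tuple is $(\zeta'_i\text{ followed by a backtrack along }\mu_{\sigma(i)})_i$, which contracts collision freely to $(\zeta'_i)_i$ exactly as above. Therefore $\zeta'=\zeta$. The step I expect to need the most care is keeping track of collisions through these two contractions and the appending operation; this stays routine rather than delicate because every time-dependent strand occurring in the homotopies has either the form $t\mapsto\mu_a(g(t))$ for one reparametrization $g$ independent of $a$—so collision freeness is inherited directly from that of $\Upsilon(\mu)$—or belongs to the $\alpha$-part, which is collision free by hypothesis, and the two pieces interact only at the points $(\bar\alpha(i),0)$, which are distinct for distinct $i$.
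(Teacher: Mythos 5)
Your proof is correct and takes essentially the same route as the paper: both define $\mu$ via the image of $\bar\alpha$, build $\zeta$ by concatenating each $\alpha_i$ with the reversed straight-line path $\overline{\mu_{\sigma(i)}}$, and verify the identity $\Upsilon(\mu)\circ\zeta=\alpha$ by contracting the resulting backtrack. You spell out the collision-free checks and the uniqueness step in more detail than the paper, which dispatches uniqueness with ``we see from the construction that $\zeta$ is then also uniquely determined,'' but the underlying argument is the same.
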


\begin{proof}
Let $\mu\colon \bld m\to\bld n$ be the unique order preserving injective function whose image equals that of $\bar \alpha$, and let $\{j_1,\ldots,j_m\}$ be the permutation of the set $\mathbf{m}=\{1,\ldots,m\}$ determined by $\bar\alpha(i)=\mu(j_i)$ for $i=1,\dots,m$.  Choose representatives $(\mu_1,\ldots,\mu_m)$ and $(\alpha_1,\ldots,\alpha_m)$ for $\Upsilon(\mu)$ 
and $\alpha$ respectively. Let ${\mu}'_i$ be the reverse path of $\mu_i$ for $1\leq i\leq m$.
Since the path ${\mu}'_{j_i}$ starts in $(\mu(j_i),0)=\alpha_i(1)$ and ends in $(j_i,0)$, 
the homotopy class of the concatenated paths 
$({\mu}'_{j_1}\cdot\alpha_1,\ldots,{\mu}'_{j_m}\cdot\alpha_m)$ 
is a braid on $m$ strings and we define this to be $\zeta$. 
The composite $\Upsilon(\mu)\circ\zeta$ is represented by 
$(\mu_{j_1}\cdot{\mu}'_{j_1}\cdot\alpha_1,\ldots,\mu_{j_m}\cdot{\mu}'_{j_m}\cdot\alpha_m)$, 
which is clearly homotopic to  $(\alpha_1,\ldots,\alpha_m)$.
The morphism $\mu$ is uniquely determined by $\bar\alpha$ and we see from 
the construction that $\zeta$ is then also uniquely determined. 
\end{proof}

The above lemma implies that there is a canonical identification
\begin{equation}\label{eq:B-identification}
\B(\bld m,\bld n) \cong \mathcal M(\bld m,\bld n)\times \mathcal B_m.
\end{equation}
Now consider a pair $(\mu,\zeta)$ in $\mathcal M(\bld m,\bld n)\times \mathcal B_m$ and a pair $(\nu,\xi)$ in $\mathcal M(\bld n,\bld p)\times\mathcal B_n$. By Lemma \ref{lemma unique decomposition}  there exists a unique morphism $\xi_*(\mu)$ in $\mathcal M(\bld m,\bld n)$ and a unique braid $\mu^*(\xi)$ in $\mathcal B_m$ such that the diagram
$$\xymatrix@-1.pt{
\mathbf{m} \ar[rr]^{\Upsilon(\mu)} \ar[d]_{\mu^\ast(\xi)} && \mathbf{n} \ar[d]^\xi \\
\mathbf{m} \ar[rr]^{\Upsilon(\xi_\ast(\mu))} && \mathbf{n}
}$$
commutes in $\B$. Hence we see that composition in $\B$ translates into the formula
\[
(\nu,\xi)\circ(\mu,\zeta)=(\nu\circ\xi_*(\mu),\mu^*(\xi)\circ\zeta)
\]
under the identification in \eqref{eq:B-identification}.

In order to define functors out of the categories considered in Definition~\ref{def B S I M}, it is sometimes convenient to have these categories expressed in terms of generators and relations. Consider first the case of $\mathcal M$ and write $\partial_n^i\colon \bld n\to \bld{n\sqcup 1}$ for the morphism that misses the element $i$ in $\{1,\dots,n+1\}$. It is well known that 
 $\cM$ is generated by the morphisms  $\partial^i_n$ subject to the relations 
 \begin{equation*}
  \partial^i_{n+1}\partial^j_n = \partial^{j+1}_{n+1}\partial^{i}_n \quad\text{ for } i\leq j. 
 \end{equation*}

Now consider the category $\B$ and let $\zeta_n^1,\dots,\zeta_n^{n-1}$ be the standard generators for the braid group $\mathcal B_n$, see e.g. \cite[Theorem~1.8]{bir}.
\begin{figure}[ht]
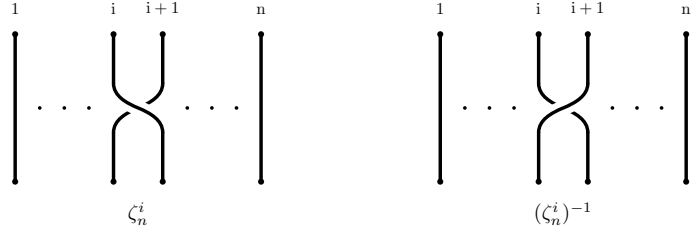

\centering \includegraphics[height=3cm]{zeta.mps} 
 \hspace{2cm}
 \includegraphics[height=3cm]{zeta-inverse.mps}
 \caption{The generator $\zeta_n^i$ and its inverse \label{fig generator zeta}}
\end{figure}

We also write $\partial_n^i\colon \bld n\to \bld{n\sqcup 1}$ for the braided injections obtained by applying the functor $\Upsilon$ to the corresponding morphisms in $\mathcal M$. 

\begin{lemma}\label{lemma generating morphisms for B}
The category $\mathfrak{B}$ is generated by the morphisms $\zeta^i_n\co \mathbf{n}\ra \mathbf{n}$ 
for $n\geq 2$ and $1\leq i \leq n-1$, and the morphisms $\partial^i_n\co \mathbf{n}\ra\mathbf{n\sqcup1}$ for 
$n\geq 0$ and $1\leq i \leq n\!+\!1$, subject to the relations 
\begin{align*}
&\!\begin{aligned}
&\zeta^i_n\zeta^j_n=\zeta^j_n\zeta^i_n &&\text{for } |i-j|\geq2 \\
&\zeta^i_n\zeta^{i+1}_n\zeta^i_n=\zeta^{i+1}_n\zeta^i_n\zeta^{i+1}_n &&\text{for } 1\leq i\leq n-2 \\
&\partial^i_{n+1}\partial^j_n = \partial^{j+1}_{n+1}\partial^{i}_n &&\text{for } i\leq j 
\end{aligned}\\
&\zeta^i_{n+1}\partial^j_n = 
\begin{cases} \partial^j_n\zeta^{i-1}_n & \text{for } j<i \\ 
\partial^{j+1}_n & \text{for } j=i \\
\partial^{j-1}_n & \text{for } j=i+1 \\
\partial^j_n\zeta^i_n & \text{for } j>i+1.
\end{cases}
\end{align*}
\end{lemma}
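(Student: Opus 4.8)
The plan is to leverage the two structural results already at hand: Lemma~\ref{lemma unique decomposition}, which gives the unique factorization $\alpha=\Upsilon(\mu)\circ\zeta$ and the bijection \eqref{eq:B-identification}, and the known presentation of $\mathcal M$ by the $\partial^i_n$ with the cosimplicial-type relation, together with the standard Artin presentation of each braid group $\mathcal B_n$ by $\zeta^1_n,\dots,\zeta^{n-1}_n$ with the far-commutation and braid relations. Since every morphism of $\B$ is $\Upsilon(\mu)\circ\zeta$ and $\Upsilon(\mu)$ is a composite of $\partial^i$'s while $\zeta$ is a word in the $\zeta^i_n$'s, the listed morphisms certainly generate $\B$; the content is that the listed relations suffice.

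First I would verify that all four families of relations actually hold in $\B$. The first two are the Artin braid relations inside $\mathcal B_n\subseteq\B$, and the third is the image under $\Upsilon$ of the defining relation of $\mathcal M$ (using that $\Upsilon$ is a functor and is injective on morphisms). The fourth family, expressing $\zeta^i_{n+1}\partial^j_n$, is a direct geometric computation with the straight-line paths defining $\partial^j_n$ and the elementary-braid paths defining $\zeta^i_{n+1}$: when the string being inserted by $\partial^j_n$ lies to the left of or far to the right of the crossing site $\{i,i+1\}$ it passes through untouched (reindexing the crossing accordingly), and when $j=i$ or $j=i+1$ the inserted string is precisely one of the two crossing strands, so the crossing can be absorbed into a reindexing of $\partial$, giving $\partial^{j+1}_n$ resp.\ $\partial^{j-1}_n$. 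Each case is checked by drawing the two pictures and exhibiting the evident isotopy fixing endpoints.

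For completeness of the relations, let $\B'$ be the category freely generated by these morphisms modulo the listed relations, so there is a canonical full functor $F\co\B'\to\B$ which is a bijection on objects; I must show $F$ is faithful. The strategy is a normal form argument. Using the relations of the fourth family as rewriting rules from left to right, one can push every generator $\partial^i$ in a word to the left past every $\zeta^j$ to its right, at the cost of relabelling indices; since this strictly decreases (say) the number of $\zeta$-generators occurring to the left of some $\partial$-generator, the process terminates. Hence every morphism of $\B'$ can be represented by a word of the form $(\text{word in }\partial\text{'s})\cdot(\text{word in }\zeta\text{'s})$, i.e.\ in the image of $\Upsilon(\mathcal M)$ composed with $\mathcal B_m$. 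Then the third family of relations lets one normalize the $\partial$-part to the unique standard $\mathcal M$-normal form (the presentation of $\mathcal M$), and the first two families let one normalize the $\zeta$-part to a chosen normal form for the braid group $\mathcal B_m$ (for instance a solution to the word problem via the Artin presentation). Counting: the set of such normal-form pairs is in bijection with $\mathcal M(\bld m,\bld n)\times\mathcal B_m$, which by \eqref{eq:B-identification} has exactly the same cardinality as $\B(\bld m,\bld n)$; since $F$ is surjective on each hom-set and each hom-set of $\B'$ is covered by these normal forms, $F$ must be bijective on hom-sets, i.e.\ an isomorphism of categories.

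The main obstacle I anticipate is making the ``push the $\partial$'s to the left'' rewriting argument genuinely terminating and confluent enough to conclude that every element has a \emph{unique} normal form purely from the listed relations, rather than merely that normal forms exist and surject onto $\B$. One clean way around a full confluence analysis is the counting shortcut above: one only needs that normal forms \emph{exist} (termination of the rewriting, plus the known normal forms for $\mathcal M$ and for $\mathcal B_m$ from their own presentations), that $F$ is surjective on objects and on each hom-set, and that $|\B'(\bld m,\bld n)|\le |\mathcal M(\bld m,\bld n)\times\mathcal B_m|=|\B(\bld m,\bld n)|$; these force $F$ to be an isomorphism without ever proving uniqueness of normal forms directly. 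Care is needed only in checking that the index bookkeeping in the fourth relation is exactly as stated so that the rewriting really does land in $\Upsilon(\mathcal M)\cdot\mathcal B_m$, and in treating the boundary cases $n=0,1$ and small $i$ where some generators are absent.
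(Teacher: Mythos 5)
Your overall approach coincides with the paper's: use Lemma~\ref{lemma unique decomposition} to see that the listed morphisms generate, verify that the four families of relations hold, rewrite an arbitrary word into the form $\Upsilon(\mu)\circ\zeta$, and then invoke the uniqueness of that decomposition. The one genuine flaw is in the ``counting shortcut'' you propose for the final step. You want to conclude that the full, bijective-on-objects functor $F\co\B'\to\B$ is faithful from the two facts that $F$ is surjective on each hom-set and that $|\B'(\bld m,\bld n)|\le |\mathcal M(\bld m,\bld n)\times\mathcal B_m|=|\B(\bld m,\bld n)|$. But these hom-sets are infinite for $m\ge 2$ (they contain a copy of the infinite group $\mathcal B_m$), and a surjection between infinite sets of equal cardinality need not be injective; for instance $n\mapsto\lfloor n/2\rfloor$ surjects $\mathbb Z$ onto $\mathbb Z$. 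So cardinality alone does not force $F$ to be an isomorphism.

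The repair is cheap and is in effect what the paper does. Rather than comparing cardinalities, observe that the composite
\[
\mathcal M(\bld m,\bld n)\times\mathcal B_m \longrightarrow \B'(\bld m,\bld n) \xr{\;F\;} \B(\bld m,\bld n),
\qquad (\mu,\zeta)\mapsto \Upsilon(\mu)\circ\zeta,
\]
is precisely the bijection \eqref{eq:B-identification} furnished by Lemma~\ref{lemma unique decomposition}. Your rewriting argument shows the first arrow is surjective, and bijectivity of the composite then forces the first arrow to be injective (hence bijective) and $F$ to be bijective on each hom-set. This is a factorization argument, not a cardinality argument, and it sidesteps any confluence analysis exactly as you hoped. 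The rest of your write-up (verification of the relations, the left-to-right rewriting using the fourth family, the termination measure, and the use of the known presentations of $\mathcal M$ and of $\mathcal B_m$) is sound; only the phrasing of this final step needs the adjustment.
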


\begin{proof}
The identification $\mathfrak{B}(\mathbf{m},\mathbf{n}) \cong 
\mathcal{M}(\mathbf{m},\mathbf{n})\times \mathcal{B}_m$ 
makes it clear that any morphism can be written in terms of the generators. The two first 
relations are the relations for the braid groups (see e.g.,\ \cite[Theorem~1.8]{bir}), the next are the relations in $\mathcal{M}$, so that leaves the relations between the $\partial^i_n$'s and $\zeta^i_n$'s. 
It is easy to see that these relations hold in $\B$ and that they can be used to decompose 
any product of the $\partial^i_n$'s and the $\zeta^i_n$'s into the form $\Upsilon(\mu)\circ \zeta$ for a braid $\zeta$ and a morphism $\mu$ in $\mathcal M$. Since such a decomposition is 
unique, the relations are also sufficient. 
\end{proof}

Finally, we consider the category $\mathcal I$ and write $\sigma_n^i\colon \bld n\to\bld n$ for the image of 
$\zeta_n^i$ under the projection $\varPi\colon \B\to\mathcal I$. 
We  obtain a presentation of $\cI$ from the presentation of $\B$ by imposing the relation $\sigma_n^i\sigma_n^i=\id_n$, just as the symmetric group $\Sigma_n$ is obtained from $\mathcal B_n$.

We use the above to define a strict monoidal structure on $\B$ with unit $\bld 0$. 
Just as for the monoidal categories considered in Diagram \eqref{diagram B B Sigma I M}, the monoidal product $\bld m\sqcup \bld n$ 
of two objects $\bld m$ and $\bld n$ in $\B$ is $\bld{m\!+\!n}$. 
The decomposition of a braided injection given in \eqref{eq:B-identification} lets us define the monoidal product $(\mu,\zeta)\sqcup(\nu,\xi)$ of two morphisms $(\mu,\zeta)$ and $(\nu,\xi)$ in $\B$ as $(\mu\sqcup\nu,\zeta\sqcup\xi)$ using the monoidal structures on $\cM$ and $\cB$, for an illustration of this see Figure \ref{fig monoidal product}. 

\begin{figure}[ht]
\centering
\includegraphics[height=1.5cm]{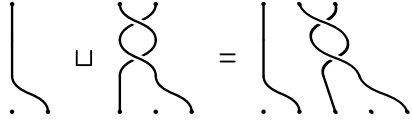} 
\caption{The monoidal product of two braided injections. \label{fig monoidal product}}
\end{figure}

It is well known that the subcategory $\cB$ is braided with braiding $\chi_{\bld m, \bld n}\co \bld m\sqcup \bld n \ra \bld n\sqcup \bld m$ moving the  first $m$ strings over the  last $n$ strings while keeping the order among the $m$ strings and the $n$ strings respectively. This family of isomorphisms is in fact also a braiding on $\B$. The hexagonal axioms for a braiding only involve morphisms in $\cB$ 
so it remains to check that $\chi_{\bld m, \bld n}$ is natural with respect to the generators $\partial^i_k$. This is quite clear geometrically (see Figure \ref{fig naturality chi} for an illustration) and can be checked algebraically by writing $\chi_{\bld m, \bld n}$ in terms of the generators.

\begin{figure}[ht]
\centering
\includegraphics[height=3cm]{naturality-chi.mps} 
\caption{The equality $(\partial^2_3\sqcup\id _\bld 2)\circ \chi_{\bld 2, \bld 3}=\chi_{\bld 2, \bld 4}\circ(\id_\bld2\sqcup \partial^2_3)$. \label{fig naturality chi}}
\end{figure}

\section{The homotopy theory of $\mathfrak{B}$-spaces}\label{sec:homotopy-B-spaces}

In this section we introduce $\B$-spaces as functors from $\B$ to the category of spaces and  
equip  the category of $\B$-spaces with a braided monoidal model structure. We assume some familiarity 
with the basic theory of cofibrantly generated model categories as presented in \cite[Section~2.1]{hov} and 
\cite[Section~11]{hirsch}.

\subsection{The category of $\mathfrak{B}$-spaces}
A $\mathfrak{B}$-space is a functor $X\co \mathfrak{B}\rightarrow\mathcal{S}$, where $\mathcal{S}$ 
is the category of simplicial sets. We call a natural transformation between two such functors a 
morphism between the two $\B$-spaces and write $\SB$ for the category of $\B$-spaces so defined.

The category $\mathcal{S}^\mathfrak{B}$ inherits much structure from $\cS$. All small limits and colimits 
exists and are constructed level-wise. Furthermore, $\SB$ is enriched, tensored and cotensored over $\mathcal{S}$. 
For a $\mathfrak{B}$-space $X$ and a simplicial set $K$, the tensor $X\times K$ and cotensor 
$X^K$ are the $\mathfrak{B}$-spaces given in level $\mathbf{n}$ by 
$$(X\times K)(\mathbf{n})=X(\mathbf{n})\times K \quad\text{ and } \quad
X^K(\mathbf{n})=\mathrm{Map}_\mathcal{S}(K,X(\mathbf{n})),$$
where $\mathrm{Map}_\mathcal{S}$ is the standard simplicial function complex.
The simplicial set of maps from $X$ to $Y$ is the end 
$$\mathrm{Map}_\SB(X,Y)= 
\int_{\mathbf{n}\in\mathfrak{B}}\mathrm{Map}_\mathcal{S}\big(X(\mathbf{n}),Y(\mathbf{n})\big).$$

\begin{lemma}
 The category of $\mathfrak{B}$-spaces is a bicomplete simplicial 
 category with the above defined structure. \qed 
\end{lemma}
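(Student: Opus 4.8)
The plan is to verify, one piece of structure at a time, that the level-wise constructions described just above really do make $\SB$ into a bicomplete simplicial category, reducing each claim to the corresponding (known) fact about the category $\cS$ of simplicial sets. First I would treat bicompleteness: since $\B$ is a small category and $\cS$ is bicomplete, the functor category $\SB=\cS^{\B}$ is bicomplete, with limits and colimits formed objectwise, i.e.\ $(\colim_i X_i)(\bld n)=\colim_i(X_i(\bld n))$ and dually for limits. This is the standard fact that diagram categories inherit (co)completeness from the target, so there is essentially nothing to do beyond citing it.

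Next I would check the simplicial enrichment. One must verify that $\mathrm{Map}_\SB(X,Y)=\int_{\bld n\in\B}\mathrm{Map}_\cS(X(\bld n),Y(\bld n))$ is well defined (the end exists because $\cS$ has all small limits and the relevant diagram is indexed by the small category $\B\times\B^{\op}$), that composition maps $\mathrm{Map}_\SB(Y,Z)\times\mathrm{Map}_\SB(X,Y)\to\mathrm{Map}_\SB(X,Z)$ are induced from composition in $\cS$ and are associative and unital, and that $\mathrm{Map}_\SB(X,Y)_0$ recovers the morphism set of $\SB$. These are formal consequences of the fact that $\cS$ is a simplicial (indeed cartesian closed) category together with the universal property of ends. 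I would then identify the tensor and cotensor: for the tensor one checks the adjunction $\mathrm{Map}_\SB(X\times K,Y)\cong\mathrm{Map}_\cS(K,\mathrm{Map}_\SB(X,Y))$ by rewriting both sides as ends over $\B$ and using the corresponding adjunction $\mathrm{Map}_\cS(A\times K,B)\cong\mathrm{Map}_\cS(K,\mathrm{Map}_\cS(A,B))$ level-wise, commuting the end past the mapping complex out of $K$; the cotensor adjunction $\mathrm{Map}_\SB(X,Y^K)\cong\mathrm{Map}_\cS(K,\mathrm{Map}_\SB(X,Y))$ is handled the same way. Together these give the two-variable adjunction making $\SB$ tensored and cotensored over $\cS$, i.e.\ a simplicial category in the strong sense.

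I do not expect a genuine obstacle here: everything follows from general category theory (diagram categories over a bicomplete, cartesian closed, simplicial base are again such) and no special feature of $\B$ beyond its being small is used. The only mild care needed is bookkeeping with ends—checking that the natural maps claimed above really are the unique ones compatible with the end projections—and making sure the unit object for the enrichment is handled correctly (the constant functor at the point). Accordingly the "proof" is really a pointer to these standard facts, and the main work, such as it is, lies in spelling out the end-manipulations for the tensor/cotensor adjunctions. Since these are routine, one may reasonably leave them to the reader, which is presumably why the statement is marked \qed.
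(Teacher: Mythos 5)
Your proposal is correct and matches the paper's (implicit) argument: the lemma is stated with no proof precisely because it follows from the standard facts you cite — bicompleteness of functor categories over a bicomplete target, and the end/tensor/cotensor formalism for enriching a diagram category over a simplicial closed base. Your spelled-out end manipulations are exactly the routine verification the authors leave to the reader.
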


\subsection{The $\mathfrak{B}$-model structure on $\mathcal{S}^\mathfrak{B}$}
We will use the free $\B$-space functors $F_\mathbf n\co \cS \ra \SB$ given by $F_{\mathbf n}(K)=\B(\mathbf n,-)\times K$ to transport the usual model structure on simplicial sets to $\SB$. The functor $F_\mathbf n$ is left adjoint to the evaluation functor $\mathrm{Ev}_\mathbf n$ taking a $\B$-space $X$ to the simplicial set $X(\mathbf n)$. Note that since $\mathbf 0$ is initial in $\B$, the functor $F_\mathbf 0$ takes a simplicial set to a constant $\B$-space. We often use the notation $\Delta$ for $F_\mathbf 0$. 

It is a standard fact, see for instance \cite[Theorem~11.6.1]{hirsch}, that $\SB$ has a level model structure where a morphism is a weak equivalence (or respectively a fibration) if it is a weak equivalence (or respectively a fibration) of simplicial sets when evaluated at each level $\mathbf n$. This model structure is cofibrantly generated with generating cofibrations 
\begin{equation*}
 I=\{F_{\mathbf n}(i)\,\,|\,\, \mathbf n \in \B,\, i\co \partial\Delta^k\ra\Delta^k \text{ for } 0\leq k\}
\end{equation*}
and generating acyclic cofibrations
\begin{equation*}
 J=\{F_{\mathbf n}(j)\,\,|\,\, \mathbf n \in \B, \, j\co \Lambda^k_l\ra\Delta^k \text{ for } k>0 \text{ and }0\leq l\leq k\}
\end{equation*}
where $i$ and $j$ denote the inclusion of the boundary of $\Delta^k$ and the $l$th horn of $\Delta^k$ in 
$\Delta^k$ respectively. The cofibrations in the level model structure have a concrete description using latching maps. The $n$th latching space of a $\B$-space $X$ is defined as
$$L_{\bld n}(X)=\colim_{(\bld m \ra\bld n)\in\partial(\B\downarrow\bld n)}X(\bld m),$$
where $\partial(\B\!\downarrow\!\bld n)$ is the full subcategory of the 
comma category $(\B\!\downarrow\!\bld n)$ with objects the non-isomorphisms.
For a map of $\B$-spaces $f\co X\ra Y$, the $n$th latching map is the $\cB_n$-equivariant map
$L_{\bld n}f\co L_{\bld n}(Y)\amalg_{L_{\bld n}(X)} X(\bld n) \ra Y(\bld n)$. A map $f\co X\ra Y$ is a cofibration if for every $n\geq0$, the $n$th latching map $L_{\bld n}f$
 is a cofibration of simplicial sets such that the $\cB_n$-action on the complement of the image is free.  We refer to such cofibrations as $\B$-cofibrations. 

 The level model structure is primarily used as a convenient first step in equipping $\cS^{\B}$ with a model structure making it Quillen equivalent to $\cS$. In such a model structure we need a wider class of weak equivalences. Recall that the Bousfield-Kan construction of the homotopy colimit of a functor 
$X$ from a small category $\cC$ to $\cS$  is the simplicial set $\hocolim_\cC X$ with $k$-simplices  
\begin{equation}\label{hocolim}
\coprod_{\bld m_0\leftarrow\cdots\leftarrow \bld m_k}X(\bld m_k)_k
\end{equation}
for morphisms $\bld m_0\leftarrow \bld m_1,\dots,\bld m_{k-1}\leftarrow \bld m_k$ in $\cC$, cf.\  \cite[Section~XII.5.1]{BK}. When the functor $X$ is a $\B$-space we will often denote its homotopy colimit by $X\hB$.

\begin{definition}\label{def B-equivalence}
 A morphisms $X\ra Y$ of $\B$-spaces is a $\B$-equivalence if the induced map 
 $X\hB \ra Y\hB$  is a weak equivalence of simplicial sets. 
\end{definition}

We say that a morphism $X\ra Y$ of $\B$-spaces is a $\B$-fibration if $X(\bld n)\ra Y(\bld n)$ is a fibration of simplicial sets for every $\bld n\in \B$ 
and if the square
\begin{equation}\label{diagram homotpy cartesian}
\xymatrix{
X(\mathbf{m}) \ar[r]^{X(\alpha)} \ar[d] & X(\mathbf{n}) \ar[d] \\
Y(\mathbf{m}) \ar[r]^{Y(\alpha)} & Y(\mathbf{n}) 
}\end{equation}
is homotopy cartesian for every braided injection 
$\alpha \co \mathbf{m}\rightarrow \mathbf{n}$.

 In order to make the $\B$-equivalences and $\B$-fibrations part of a cofibrantly generated model structure we have to add more generating acyclic cofibrations compared to the level model structure. We follow the approach taken for diagram spectra in \cite[Section~3.4]{HSS} and \cite[Section~9]{MMSS} and for diagram spaces in \cite[Section~6.11]{Sagave-Schlichtkrull}: Each braided injection $\alpha\co\mathbf m\ra \mathbf n$ gives rise to a map of $\B$-spaces $\alpha^\ast\co F_{\mathbf{n}}(\ast) \ra F_{\mathbf{m}}(\ast)$. The latter map factors through the mapping cylinder $M(\alpha^\ast)$ as $\alpha^\ast=r_\alpha j_\alpha$, where $j_\alpha$ is a cofibration in the level model structure and $r_\alpha$ is a simplicial homotopy equivalence.
We now set 
$$\bar{J}= \{j_\alpha\Box i\,\,|\,\, \alpha\co\bld m\ra \bld n \in \B,\, i\co \partial\Delta^k\ra\Delta^k \text{ for } 0\leq k\},$$
where $\Box$ denotes the pushout-product, see e.g.\ \cite[Definition~4.2.1]{hov}.

\begin{proposition}\label{prop B-model structure}
There is a model structure on $\SB$, the $\B$-model structure, with weak equivalences the $\B$-equivalences, fibrations the $\B$-fibrations and cofibrations the $\B$-cofibrations. This model structure is simplicial and cofibrantly generated 
where $I_\B=I$ is the set of generating cofibrations and $J_\B=J\cup \bar{J}$ is the set of generating acyclic cofibrations. 
\end{proposition}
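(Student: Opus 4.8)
The plan is to construct the $\B$-model structure by a standard localization / lifting argument, using the level model structure as a starting point, and to identify the localized structure concretely. First I would recall that the level model structure of the excerpt is cofibrantly generated with generating (acyclic) cofibrations $I$ and $J$, and that $\SB$ is a simplicial bicomplete category, so the only thing to produce is a new, cofibrantly generated model structure with the same cofibrations but a larger class of weak equivalences. The natural approach is Bousfield localization: one takes the set $\bar J$ defined just above the statement — the pushout-products $j_\alpha \Box i$ of the level-cofibrant replacements $j_\alpha$ of the maps $\alpha^\ast \colon F_{\bld n}(\ast)\to F_{\bld m}(\ast)$ with the boundary inclusions $i\colon \partial\Delta^k\to\Delta^k$ — and shows that the model structure cofibrantly generated by $I_\B = I$ and $J_\B = J\cup\bar J$ exists and has the desired properties. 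I would verify the existence via Kan's recognition theorem (e.g.\ \cite[Theorem~2.1.19]{hov} or \cite[Theorem~11.3.1]{hirsch}): the two sets permit the small object argument since all objects are small in $\SB$; I would define the weak equivalences to be the $\B$-equivalences of Definition~\ref{def B-equivalence} and check (a) $\B$-equivalences satisfy two-out-of-three and are closed under retracts (immediate, since $\hocolim$ preserves retracts and weak equivalences of simplicial sets have these properties), (b) $\bar J$-cell complexes are $\B$-equivalences, and (c) a map with the right lifting property against $I_\B$ is both a $\B$-equivalence and has the right lifting property against $J_\B$.

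The substantive steps are (b) and the identification of the fibrant objects and fibrations. For (b), the key point is that each $j_\alpha$ is a $\B$-equivalence: since $r_\alpha j_\alpha = \alpha^\ast$ with $r_\alpha$ a simplicial homotopy equivalence, it suffices that $\alpha^\ast\colon F_{\bld n}(\ast)\to F_{\bld m}(\ast)$ induces a weak equivalence on homotopy colimits. But $(F_{\bld m}(\ast))\hB = \hocolim_\B \B(\bld m,-) \simeq \mathrm{N}(\bld m\!\downarrow\!\B)$, which is contractible because $\bld m\!\downarrow\!\B$ has an initial object (the identity); hence both sides are contractible and $\alpha^\ast$ is a $\B$-equivalence. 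Then pushout-products with $\partial\Delta^k\to\Delta^k$, pushouts, transfinite compositions and retracts all preserve $\B$-equivalences — here one uses that $\hocolim_\B$ commutes with these colimits and with products by simplicial sets, together with the gluing/left-properness properties of simplicial sets — so every $\bar J$-cell complex, and every $J_\B$-cell complex, is an acyclic $\B$-cofibration. For the fibration identification, I would show that a map has the right lifting property with respect to $J_\B = J\cup\bar J$ exactly when it is a level fibration (from the $J$-part) and each square \eqref{diagram homotpy cartesian} is homotopy cartesian (from the $\bar J$-part, using the factorization $\alpha^\ast = r_\alpha j_\alpha$ and an adjunction argument relating lifting against $j_\alpha\Box i$ to the square being homotopy cartesian), i.e.\ precisely the $\B$-fibrations.

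The remaining piece is the hard half of Kan's criterion: a map that is an $I_\B$-injective (equivalently, a level acyclic fibration) must be a $\B$-equivalence and a $J_\B$-injective. Level acyclic fibrations are level equivalences, hence $\B$-equivalences (a level equivalence induces an equivalence on $\hocolim_\B$), and they are level fibrations with all squares \eqref{diagram homotpy cartesian} homotopy cartesian (a level equivalence of squares makes any square homotopy cartesian), so they are $\B$-fibrations, i.e.\ $J_\B$-injective. Conversely, the genuinely delicate statement is that a map which is both a $\B$-fibration and a $\B$-equivalence is a level acyclic fibration — equivalently, $I_\B$-injective. This is the main obstacle: it amounts to showing that a $\B$-fibration that is a $\B$-equivalence is a level equivalence. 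I would prove it by the usual "homotopy cartesian squares detect level equivalences over a connected base" argument — concretely, $\hocolim_\B$ of a $\B$-fibration is, up to equivalence, modeled by an honest fibration over $\mathrm{N}\B$ with the fibers $X(\bld n)$, so a $\B$-equivalence forces a fiberwise equivalence, hence level equivalences $X(\bld n)\to Y(\bld n)$ for all $\bld n$ — using the contractibility (more precisely, the homotopy type) of the relevant comma categories and that $\B$ is connected. With both halves of Kan's theorem in hand, the model structure exists; that it is simplicial follows from checking the pushout-product (SM7) axiom, which reduces to the simplicial-set case on generating cofibrations because $I_\B$ and $\bar J$ are built from pushout-products with $\partial\Delta^k\to\Delta^k$ and $\Lambda^k_l\to\Delta^k$, as in \cite[Section~9]{MMSS} and \cite[Section~6.11]{Sagave-Schlichtkrull}; and that cofibrations coincide with $\B$-cofibrations is the standard latching-object identification of $I$-cofibrations already recalled in the excerpt.
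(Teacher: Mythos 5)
Your proposal follows exactly the approach the paper takes: the paper's own ``proof'' is a one-line reference to the proofs of Propositions~6.16 and 6.19 in \cite{Sagave-Schlichtkrull} (which in turn follow the MMSS pattern of localizing the level model structure by adjoining $\bar J$ and verifying Kan's recognition theorem), and you have spelled out that argument in the order and with the lemmas that Sagave--Schlichtkrull use. The one step you flag as ``genuinely delicate''---that a $\B$-fibration which is a $\B$-equivalence is a level equivalence---is indeed the crux and is handled in \cite{Sagave-Schlichtkrull} by the Quillen Theorem~B / quasi-fibration argument you gesture at; your phrasing ``modeled by an honest fibration over $\mathrm{N}\B$'' should be read in that spirit rather than literally, but the intended argument is the correct one.
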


\begin{proof}
The proof is similar to the proofs of Propositions~6.16 and 6.19 in \cite{Sagave-Schlichtkrull}. (We refer the reader to Remark~\ref{rem:sagave-schlichtkrull-remark} for a summary of the extent to which the results for symmetric monoidal diagram categories established in \cite{Sagave-Schlichtkrull} carries over to the present setting.) 
\end{proof}

As promised this model structure makes $\B$-spaces Quillen equivalent to simplicial sets.

\begin{proposition}
 The adjunction 
 $\colim_\B\co\SB \rightleftarrows \cS \thinspace\colon \Delta$
 is a Quillen equivalence.
\end{proposition}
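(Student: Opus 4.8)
The plan is to verify the two conditions of a Quillen equivalence: first that the adjunction $(\colim_{\B}, \Delta)$ is a Quillen adjunction for the $\B$-model structure, and second that it detects and preserves weak equivalences appropriately. Since both functors are already characterized on generating (acyclic) cofibrations, it is cleanest to check the Quillen adjunction via the left adjoint: $\colim_{\B}$ sends the generating cofibrations $I_{\B}=I$ and the generating acyclic cofibrations $J_{\B}=J\cup\bar J$ to cofibrations, resp.\ acyclic cofibrations, of simplicial sets. For $F_{\bld n}(i)$ this is immediate because $\colim_{\B} F_{\bld n}(K)\cong K$, so $\colim_{\B}$ carries $I$ and $J$ to the generating (acyclic) cofibrations of $\cS$; the only real point is that $\colim_{\B}$ takes the maps in $\bar J$ to weak equivalences. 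Here I would use that $\colim_{\B}$ applied to $j_{\alpha}\Box i$ yields, up to the isomorphism $\colim_{\B}F_{\bld m}(\ast)\cong\ast$, the pushout-product of the map $\colim_{\B}(j_{\alpha})\colon \ast\to\ast$ with $i$, hence an isomorphism; alternatively, since $r_{\alpha}$ is a simplicial homotopy equivalence and $\alpha^{*}=r_{\alpha}j_{\alpha}$ with $\colim_{\B}(\alpha^{*})$ an isomorphism, $\colim_{\B}(j_{\alpha})$ is a weak equivalence, and pushout-products with monomorphisms preserve this. This establishes the Quillen adjunction.

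Next I would prove it is a Quillen equivalence. The efficient route is to show (i) the right adjoint $\Delta$ reflects weak equivalences between fibrant objects, and (ii) for every cofibrant $\B$-space $X$ the derived unit $X\to \Delta\,\colim_{\B}X^{\mathrm{fib}}$ is a $\B$-equivalence; equivalently one can show that $\Delta$ reflects weak equivalences between fibrant objects and that the derived counit $\colim_{\B}(\Delta K)^{\mathrm{cof}}\to K$ is a weak equivalence for all $K$. For the counit: $\colim_{\B}\Delta K\cong K$ since $\bld 0$ is initial in $\B$ so $\Delta K=F_{\bld 0}(K)$, and a cofibrant replacement of $\Delta K$ maps to $\Delta K$ by a level equivalence (in fact a $\B$-equivalence), and $\colim_{\B}$ sends $\B$-cofibrant objects' $\B$-equivalences to weak equivalences since on cofibrant objects $\colim_{\B}$ models the homotopy colimit; I would invoke the standard comparison between $\colim_{\B}$ and $\hocolim_{\B}$ on cofibrant $\B$-spaces. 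The cleanest packaging is: for $X$ $\B$-cofibrant there is a natural weak equivalence $X_{h\B}=\hocolim_{\B}X\xrightarrow{\ \simeq\ }\colim_{\B}X$, because $\B$-cofibrant $\B$-spaces are in particular cofibrant in the projective (level) model structure and for such the canonical map from the homotopy colimit to the colimit is a weak equivalence (e.g.\ by the argument of \cite[Section~XII.5.1]{BK} or the projective-cofibrant replacement of the constant diagram). Granting this, the derived counit $\colim_{\B}(\Delta K)^{\mathrm{cof}}\to K$ is identified with the weak equivalence $(\Delta K)^{\mathrm{cof}}_{h\B}\to (\Delta K)_{h\B}\xrightarrow{\simeq} K$, the last map being a weak equivalence because $\B$ has the contractible nerve $\B(\ast)\simeq \mathrm{N}\B$ is contractible — indeed $\B\cong \cM\wr\cB$ fibered over $\cM$ and $\mathrm{N}\cM$ is contractible since $\cM$ has an initial-like cofinality property, so $\hocolim_{\B}(\text{constant})\simeq \mathrm{N}\B\times K\simeq K$.

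For the reflection of weak equivalences between fibrant objects: suppose $f\colon X\to Y$ is a map of $\B$-fibrant $\B$-spaces such that $\colim_{\B}f$ is a weak equivalence. By the defining condition of $\B$-fibrant objects, all structure maps $X(\alpha)$, $Y(\alpha)$ are weak equivalences (the squares \eqref{diagram homotpy cartesian} are homotopy cartesian and each $\B(\bld m,\bld n)\to\B(\bld n,\bld n)$-action translation shows, together with homotopy-cofinality of the endomorphisms, that the structure maps are equivalences; more precisely $\B$-fibrant $\B$-spaces are homotopically constant), so that the natural map $X(\bld 0)=\mathrm{Ev}_{\bld 0}X\to \hocolim_{\B}X\simeq\colim_{\B}X^{\mathrm{cof}}$ becomes an equivalence and the homotopy colimit of a homotopically constant $\B$-space $X$ computes $X(\bld n)$ for any $n$. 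Then $\colim_{\B}f$ being a weak equivalence forces $X(\bld n)\to Y(\bld n)$ to be a weak equivalence for all $\bld n$, i.e.\ $f$ is a level equivalence, hence a $\B$-equivalence. This closes the argument via \cite[Corollary~1.3.16]{hov} (an adjunction is a Quillen equivalence if the left adjoint reflects weak equivalences between cofibrant objects after composing with the derived counit, or dually).

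The main obstacle I anticipate is establishing cleanly that $\B$-cofibrant $\B$-spaces are "homotopically well-behaved" in the two ways needed: that $\hocolim_{\B}X\to\colim_{\B}X$ is a weak equivalence for $X$ $\B$-cofibrant, and that $\B$-fibrant $\B$-spaces are homotopically constant with $X(\bld n)\simeq X_{h\B}$. Both ultimately rest on the homotopy-theoretic properties of the category $\B$ — above all the contractibility of $\mathrm{N}\B$ and the fact that each inclusion of an endomorphism monoid $\cB_n\hookrightarrow\B$ (or the relevant comma categories) is homotopy cofinal — which is the content of the preceding section on the homotopy theory of $\B$-spaces; I would cite the analogue of \cite[Section~6]{Sagave-Schlichtkrull} rather than reprove it. Once those inputs are in hand the rest is the formal two-out-of-three bookkeeping for Quillen equivalences.
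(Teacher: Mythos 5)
Your overall strategy matches the paper's, which simply observes that $\bld 0$ is initial in $\B$ (so $\nerve\B$ is contractible) and defers to the argument of \cite[Proposition~6.23]{Sagave-Schlichtkrull}; your two key inputs --- the comparison $\hocolim_{\B}X\xrightarrow{\sim}\colim_{\B}X$ for cofibrant $X$ and the identification $(\Delta K)\hB\simeq\nerve\B\times K\simeq K$ --- are exactly the right ones, and your verification of the Quillen adjunction and of the derived counit condition is sound (modulo the small slip that $\colim_{\B}(j_{\alpha})$ is the acyclic cofibration $\ast\to\Delta^1$, not an isomorphism; your fallback argument via $r_{\alpha}$ and two-out-of-three is the correct one).

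However, the final step has a genuine flaw. Having verified the derived counit condition, the criterion of \cite[Corollary~1.3.16]{hov} requires that the \emph{left} adjoint $\colim_{\B}$ reflect weak equivalences between \emph{cofibrant} objects, but you instead argue for reflection between \emph{fibrant} objects, and that argument does not go through: for a $\B$-fibrant but non-cofibrant $X$ the point-set colimit $\colim_{\B}X$ need not have the homotopy type of $\hocolim_{\B}X$ (only the colimit of a cofibrant replacement does, as your own notation $\colim_{\B}X^{\mathrm{cof}}$ concedes), so the inference from ``$\colim_{\B}f$ is a weak equivalence'' to ``$f$ is a level equivalence'' is unjustified. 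The repair is immediate and uses only what you already have: for $\B$-cofibrant $X$ and $Y$ the natural maps $X\hB\to\colim_{\B}X$ and $Y\hB\to\colim_{\B}Y$ are weak equivalences, so $\colim_{\B}f$ is a weak equivalence if and only if $f\hB$ is, i.e.\ if and only if $f$ is a $\B$-equivalence; this gives reflection between cofibrant objects directly and the fibrant-object discussion can be deleted. (Alternatively, stick with criterion (c) of \cite[Corollary~1.3.16]{hov}: $\Delta$ preserves and reflects all weak equivalences since $(\Delta f)\hB\cong\nerve\B\times f$, and the derived unit $X\to\Delta\bigl((\colim_{\B}X)^{\mathrm{fib}}\bigr)$ for cofibrant $X$ is a $\B$-equivalence by applying $(-)\hB$ and using the same two inputs.)
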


\begin{proof}
 The category  $\B$ has an initial object so $\nerve \B$ is a contractible simplicial set. 
  Arguing as in the proof of Proposition~6.23 in \cite{Sagave-Schlichtkrull} yields the result. 
\end{proof}

\begin{example}\label{ex:X-bullet-example-1}
In general an $\cI$-space $Z\colon \cI\to \cS$ pulls back to a $\B$-space $\Pi^*Z$ via the functor $\Pi\colon\B\to \cI$ from Section~\ref{sec:braided-injections}. Consider in particular a based space $X$ with base point $*$ and the $\cI$-space $X^{\bullet}\colon \cI\to\cS$ such that $X^{\bullet}(\bld n)=X^n$. A morphism $\alpha\colon\bld m\to\bld n$ in $\cI$ acts on an element $\bld x=(x_1,\dots,x_m)$ by 
\[
\alpha_*(\bld x)=(x_{\alpha^{-1}(1)},\dots,
x_{\alpha^{-1}(n)}), 
\]
where $x_{\alpha^{-1}(j)}=x_i$ if $\alpha(i)=j$ and $x_{\alpha^{-1}(j)}=*$ if $j$ is not in the image of $\alpha$.
It is proved in \cite{SHom} that if $X$ is connected, then the geometric realization $|X^{\bullet}_{h\cI}|$ is equivalent to the infinite loop space $\Omega^{\infty}\Sigma^{\infty}(|X|)$. In contrast to this we shall prove in Example~\ref{ex:X-bullet-example-2} that $|(\Pi^*X^{\bullet})_{h\B}|$ is equivalent to $\Omega^2\Sigma^2(|X|)$ for connected $X$.
\end{example}

\subsection{The flat $\mathfrak{B}$-model structure on $\mathcal{S}^\mathfrak{B}$}

We will now consider another structure on $\B$-spaces, the flat $\B$-model structure, which takes into account that each level of a $\B$-space has a left action of a braid group. The weak equivalences are again the 
$\B$-equivalences, but the flat $\B$-model structure has more cofibrant objects than the $\B$-model structure. In some places, in particular in Section~\ref{sec:classifying}, we get more general results by considering these ``flat'' objects instead of only the $\B$-cofibrant objects. The flat $\B$-model structure is constructed similarly to the $\B$-model structure, but the starting point is Shipley's mixed model structure on the category  $\cB_n$-$\cS$ of  simplicial sets with left $\cB_n$-action, see \cite[Proposition~1.3]{Shipley}. Shipley only considers finite groups, but the construction applies equally well to discrete groups in general if one allows all subgroups to be considered. An equivariant map is a weak equivalence (or respectively a cofibration) in the  mixed model structure  if the underlying map of simplicial sets is. 
Recall that given a group $H$ and an $H$-space $K$, the space of homotopy fixed points $K^{hH}$ 
is the homotopy limit of $K$
viewed as a diagram over the one-object category $H$. An equivariant map $K\ra L$ 
is a fibration in the  mixed model structure if the induced maps $K^H\ra L^H$ of fixed points are fibrations and the diagrams 
$$\xymatrix{
K^H \ar[r] \ar[d] & K^{hH} \ar[d] \\
L^H \ar[r]& L^{hH} 
}$$
are homotopy cartesian for all subgroups $H$ of $\cB_n$. This model structure is cofibrantly generated, see the proof of \cite[Proposition~1.3]{Shipley} for a description of the generating (acyclic) cofibrations.

The forgetful functor $\mathrm{Ev}_{\mathbf n}\co \SB \ra \cB_n\text{-}\cS$ evaluating a $\B$-space $X$ at the $n$th level has a right adjoint $G_{\mathbf n}$ given by $G_{\mathbf n}(K)=\B(\mathbf n,-)\times_{\cB_n} K$. We proceed as in the previous subsection and get a new level model structure on 
$\SB$ where  a morphism is a weak equivalence (or respectively a fibration) if it is a weak equivalence (or respectively a fibration) in the mixed model structure on $\cB_n$-$\cS$ when evaluated at each level $\mathbf n$. This model structure is cofibrantly generated with generating (acyclic) cofibrations $I_\mathrm{m}$ ($J_\mathrm{m}$) obtained by applying $G_\mathbf n$  to the generating (acyclic) cofibrations for the mixed model structure on $\cB_n$-$\cS$ for all $\mathbf n$ in $\B$. 
% $$I_\mathrm{eq}=\{G_{\mathbf n}(i')\,\,|\,\, \mathbf n \in \B\ \text{, } i' \text{ a generating cofibration in } \cB_n\text{-}\cS\}$$
% and generating acyclic cofibrations 
% $$J_\mathrm{eq}=\{G_{\mathbf n}(j')\,\,|\,\, \mathbf n \in \B\ \text{, } j' \text{ a generating cofibration in } \cB_n\text{-}\cS\}.$$
A morphisms $f\co X\ra Y$ is a cofibration in this level model structure if for every $n\geq0$, the $n$th latching map $L_{\bld n}f$
 is a cofibration of simplicial sets. We refer to such cofibrations as flat $\B$-cofibrations. 
A morphism $X\ra Y$ of $\B$-spaces is said to be a flat $\B$-fibration if $X(\bld n)\ra Y(\bld n)$ is a fibration in  the mixed model structure on $\cB_n$-$\cS$ for every $\bld n$ in $ \B$ 
and if the square \eqref{diagram homotpy cartesian}
% \begin{equation*}\label{homotpy cartesian}
% \xymatrix{
% X(\mathbf{m}) \ar[r]^{X(\alpha)} \ar[d] & X(\mathbf{n}) \ar[d] \\
% Y(\mathbf{m}) \ar[r]^{Y(\alpha)} & Y(\mathbf{n}) 
% }\end{equation*}
is homotopy cartesian for every braided injection 
$\alpha \co \mathbf{m}\rightarrow \mathbf{n}$.

\begin{proposition}\label{prop flat model structure}
There is a model structure on $\SB$, the flat $\B$-model structure, with weak equivalences the $\B$-equivalences, fibrations the flat $\B$-fibrations and cofibrations the flat $\B$-cofibrations. This model structure is simplicial and cofibrantly generated 
where $I_{\mathrm{flat}}=I_\mathrm{m}$ is the set of generating cofibrations and $J_{\mathrm{flat}}=J_\mathrm{m}\cup \bar{J}$ is the set of generating acyclic cofibrations. 
\end{proposition}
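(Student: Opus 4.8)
The plan is to verify the hypotheses of the recognition criterion for cofibrantly generated model categories \cite[Theorem~2.1.19]{hov} for the sets $I_{\mathrm{flat}}=I_{\mathrm m}$ and $J_{\mathrm{flat}}=J_{\mathrm m}\cup\bar J$, following the proof of Proposition~\ref{prop B-model structure} (hence of \cite[Propositions~6.16 and 6.19]{Sagave-Schlichtkrull}) step by step, but with Shipley's mixed model structure on $\cB_n\text{-}\cS$ in place of the Kan--Quillen model structure on $\cS$ --- this being legitimate since, as noted above, that structure is cofibrantly generated also for the infinite discrete groups $\cB_n$ once all subgroups are allowed. The class $w_\B$ of $\B$-equivalences satisfies the two-out-of-three property and is closed under retracts because $(-)\hB$ is a functor and both properties hold for weak equivalences of simplicial sets. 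Since the diagram categories involved are locally presentable, the domains of $I_{\mathrm{flat}}$ and $J_{\mathrm{flat}}$ are small and both sets permit the small object argument.

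First I would identify the maps cut out by the relevant lifting properties, using that the flat level model structure on $\SB$ is already in hand. By construction the $I_{\mathrm{flat}}$-cofibrations are the flat $\B$-cofibrations, the $I_{\mathrm{flat}}$-injective maps are the level-wise acyclic fibrations in the mixed model structure, and the $J_{\mathrm m}$-injective maps are the level-wise fibrations in the mixed model structure; the latter are in particular level-wise Kan fibrations of underlying simplicial sets. For such a $p\co X\ra Y$, the adjunctions $(\mathrm{Ev}_{\mathbf n},G_{\mathbf n})$ and $(F_{\mathbf n},\mathrm{Ev}_{\mathbf n})$ together with the mapping-cylinder factorization $\alpha^*=r_\alpha j_\alpha$, in which $r_\alpha$ is a simplicial homotopy equivalence, show --- exactly as in \cite[Section~9]{MMSS} and \cite[Section~6]{Sagave-Schlichtkrull} --- that $p$ has the right lifting property with respect to $\bar J$ if and only if each square \eqref{diagram homotpy cartesian} is homotopy cartesian. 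Hence the $J_{\mathrm{flat}}$-injective maps are precisely the flat $\B$-fibrations.

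The core of the argument is the identification $I_{\mathrm{flat}}\text{-inj}=J_{\mathrm{flat}}\text{-inj}\cap w_\B$ together with the assertion that relative $J_{\mathrm{flat}}$-cell complexes are flat $\B$-cofibrations lying in $w_\B$. The flat $\B$-cofibration part is immediate, as $J_{\mathrm m}$ consists of acyclic flat $\B$-cofibrations and each $j_\alpha\Box i$ is a flat $\B$-cofibration. For the $w_\B$ part I would use that the Bousfield--Kan homotopy colimit commutes with tensors by simplicial sets, with pushouts, and with filtered colimits, and takes level-wise injections to injections, reducing the claim to the statement that each $j_\alpha$ --- equivalently $\alpha^*\co F_{\mathbf n}(*)\ra F_{\mathbf m}(*)$ --- is a $\B$-equivalence; but $(\alpha^*)\hB$ is the map $\nerve(\mathbf n\down\B)\ra\nerve(\mathbf m\down\B)$ induced by precomposition with $\alpha$, and both comma categories have initial objects $(\mathbf n,\id_{\mathbf n})$ and $(\mathbf m,\id_{\mathbf m})$, so both nerves are contractible. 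For the identification of lifting classes, the inclusion ``$\subseteq$'' is formal: a level-wise acyclic fibration in the mixed sense is a level equivalence of underlying simplicial sets, hence a $\B$-equivalence, and is automatically a flat $\B$-fibration because a level fibration sitting over a level equivalence has homotopy cartesian squares \eqref{diagram homotpy cartesian}. The reverse inclusion asserts that a flat $\B$-fibration lying in $w_\B$ is a level equivalence in the mixed model structure --- equivalently, being already a level-wise mixed fibration, a level equivalence of underlying simplicial sets. This I would deduce as in \cite[Section~9]{MMSS} and \cite[Section~6]{Sagave-Schlichtkrull}: the homotopy cartesian squares \eqref{diagram homotpy cartesian} defining a flat $\B$-fibration $p$, together with the contractibility of the comma categories $\mathbf n\down\B$, force the square comparing $X(\mathbf n)\ra Y(\mathbf n)$ with $X\hB\ra Y\hB$ to be homotopy cartesian for every $\mathbf n$, and since the latter map is a weak equivalence when $p\in w_\B$, so is the former.

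Granting these verifications, the recognition criterion yields the flat $\B$-model structure, cofibrantly generated with the stated generating sets by construction. That it is simplicial is checked as in \cite[Section~6]{Sagave-Schlichtkrull} by showing that pushout-products of $I_{\mathrm{flat}}$ and $J_{\mathrm{flat}}$ with the generating cofibrations of $\cS$ lie respectively among the flat $\B$-cofibrations and the acyclic flat $\B$-cofibrations, using that the underlying flat level model structure is simplicial. I expect the only step requiring real care --- the remainder being a routine transcription of the projective case treated in Proposition~\ref{prop B-model structure} --- to be the homotopy colimit lemma of the previous paragraph in the presence of the homotopy fixed point conditions built into Shipley's mixed structure on $\cB_n\text{-}\cS$; however, since $\bar J$ and $w_\B$ are literally the classes used in the projective case and the squares \eqref{diagram homotpy cartesian} involve only underlying simplicial sets, the argument should carry over without essential change.
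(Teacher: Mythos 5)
Your argument is correct and takes the same route the paper intends: the paper's proof of Proposition~\ref{prop flat model structure} is a one-line reference to \cite[Propositions~6.16 and 6.19]{Sagave-Schlichtkrull}, and your proposal is a faithful unpacking of exactly that transfer-and-recognition argument (Hovey's criterion, identification of the $J$-injectives via the mapping-cylinder factorizations of the $\alpha^*$, and the homotopy-cartesian comparison between the levelwise squares and $X_{h\B}\to Y_{h\B}$), adapted by replacing the Kan--Quillen structure with Shipley's mixed structure on $\cB_n$-$\cS$ level by level.
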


\begin{proof}
The proof is similar to the proofs of Propositions~6.16 and 6.19 in \cite{Sagave-Schlichtkrull}. 
\end{proof}

We will refer to the flat $\B$-cofibrant objects simply as flat objects. These will play an important role also when we are considering the $\B$-model structure.
The next result gives a criterion for an object to be flat which is easier to check than the one given above.

\begin{proposition}\label{prop flat B-space}
A $\B$-space $X$ is flat if and only if each
morphism $\bld m \ra \bld n$ induces a cofibration $X(\bld m) \ra X(\bld n)$ 
and for each diagram of the
following form (with maps induced by the evident order preserving morphisms)
\begin{equation}\label{diagram condition for flatness}
 \xymatrix{
 X(\bld m) \ar[r] \ar[d] & X(\bld{m\sqcup n}) \ar[d] \\
 X(\bld{l\sqcup m}) \ar[r] & X(\bld{l\sqcup m\sqcup n})}
\end{equation}
the intersection of the images of $ X(\bld{l\sqcup m})$ and $X(\bld{m\sqcup n})$ 
in $ X(\bld{l\sqcup m\sqcup n})$ equals the image of $X(\bld m)$.
\end{proposition}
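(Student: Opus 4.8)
The plan is to unravel flatness through latching objects and then analyse the latching category $\partial(\B\down\bld n)$ combinatorially. By the construction of the flat $\B$-model structure in Proposition~\ref{prop flat model structure}, $X$ is flat precisely when the latching map $L_{\bld n}(X)\to X(\bld n)$ is a monomorphism of simplicial sets for every $n\ge 0$, so everything reduces to computing the colimit $L_{\bld n}(X)=\colim_{\partial(\B\down\bld n)}X$. Using the composition formula recorded after \eqref{eq:B-identification}, one first shows that $\partial(\B\down\bld n)$ carries at most one morphism between any two objects, and that there is one from $(\mu,\zeta)$ to $(\nu,\xi)$ exactly when $\operatorname{im}\bar\mu\subseteq\operatorname{im}\bar\nu$; hence sending $(\mu,\zeta)$ to $\operatorname{im}\bar\mu$ identifies $\partial(\B\down\bld n)$, up to equivalence, with the poset of proper subsets of $\{1,\dots,n\}$, and under this identification the restriction of $X$ becomes the punctured-cube diagram $S\mapsto X(\bld{|S|})$ with structure maps induced by the order preserving inclusions. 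Thus $L_{\bld n}(X)=\colim_{S\subsetneq\{1,\dots,n\}}X(\bld{|S|})$ and the canonical map $\Psi_n\co L_{\bld n}(X)\to X(\bld n)$ is assembled from the maps $\psi_S=X(\iota_S)$ for $\iota_S\co\bld{|S|}\to\bld n$ the inclusion of $S$.

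For the implication from \textbf{(a)} and \textbf{(b)} to flatness, observe that every morphism of $\B$ is of the form $\Upsilon(\mu)\circ\zeta$ by Lemma~\ref{lemma unique decomposition}, with $\zeta$ a braid and hence $X(\zeta)$ an isomorphism, so $\operatorname{im}(X(f))$ depends only on $\operatorname{im}\bar f$ and condition (a) is equivalent to the assertion that every $\psi_S$ is a monomorphism. One then proves that $\operatorname{im}\psi_S\cap\operatorname{im}\psi_T=\operatorname{im}\psi_{S\cap T}$ for all proper $S,T$: first reduce to the case $S\cup T=\{1,\dots,n\}$ by applying (a) inside $X(\bld{|S\cup T|})$, and then choose a braid $\beta\in\cB_n$ whose underlying permutation carries $S$ to an initial segment and $T$ to a final segment — this is possible exactly because $S\cup T$ is everything — so that $X(\beta)$ transports the desired identity to the instance of (b) with $l=|S\setminus T|$, $m=|S\cap T|$, and final block $|T\setminus S|$. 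Granting this, $L_{\bld n}(X)$ is a colimit of a diagram of subobjects of $X(\bld n)$ with the correct pairwise intersections; computing levelwise in the topos of simplicial sets, $\Psi_n$ is the inclusion of the union $\bigcup_{i=1}^{n}\operatorname{im}\psi_{\{1,\dots,n\}\setminus\{i\}}$, in particular a monomorphism, so $X$ is flat.

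For the converse one can argue that (a) and (b) both hold for the free $\B$-spaces $G_{\bld n}(K)$ — there one checks directly that the representable $\B(\bld n,-)$ has injective structure maps and satisfies the block intersection identity because containment of images characterises the relevant factorisations, and that both properties survive the quotient by $\cB_n$ since $\operatorname{im}\bar f$ is constant on $\cB_n$-orbits — and that (a) and (b) are inherited by retracts and by the coproducts, pushouts along the generating flat cofibrations, and transfinite compositions out of which every flat object is built. Alternatively, (a) may be extracted by induction on $n$ from the monomorphy of $\Psi_n$ together with the description of $L_{\bld n}(X)$ above, after which (b) follows by reading off the forced pairwise intersections and transporting the block-shaped instances back through a braid.

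The step I expect to be the main obstacle is the heart of the second paragraph: showing that the single rigidly-shaped family of squares in (b) controls all intersections $\operatorname{im}\psi_S\cap\operatorname{im}\psi_T$, which forces one to combine the braid-conjugation manoeuvre with an induction on $n$, and to verify carefully that the punctured-cube colimit of subobjects with correct pairwise intersections really is their union in the topos of simplicial sets. Identifying $\partial(\B\down\bld n)$ with the poset of proper subsets via the composition formula, and (in the converse) checking the cell-by-cell closure properties or the inductive monomorphy of the face maps, are more routine but still require some bookkeeping.
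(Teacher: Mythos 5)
Your plan follows the paper's proof essentially verbatim: identify $\partial(\B\!\downarrow\!\bld n)$ with the poset of proper subsets of $\bld n$ by means of Lemma~\ref{lemma unique decomposition} (the paper phrases this as the image of $(\cM\!\downarrow\!\bld n)$ being a skeletal subcategory, which is the same observation as your claim that $\partial(\B\!\downarrow\!\bld n)$ is a preorder whose skeleton is the punctured cube poset), recognise flat $\B$-cofibrations as levelwise cofibrations of the resulting cubical diagrams, and reduce the intersection calculus for arbitrary $S,T$ to the block configuration of~\eqref{diagram condition for flatness} by conjugating with a braid whose underlying permutation arranges $S\setminus T$, $S\cap T$, $T\setminus S$ into consecutive blocks. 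The one difference is that the paper relegates the cubical combinatorics you sketch in your last two paragraphs -- that the punctured-cube colimit of subobjects with the correct pairwise intersections computes the union, and the converse direction extracting (a) and (b) from monomorphy of the latching maps -- to the $\cI$-space analogue in~\cite[Proposition~3.11]{Sagave-Schlichtkrull}, which is exactly where the ``main obstacle'' you flag is resolved.
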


\begin{proof}
 Recall from Definition~\ref{def B S I M} the canonical embedding 
 $\Upsilon\co\cM\ra\B$, where $\cM$ is the category with the same objects as $\B$ 
 and injective order preserving functions as morphisms. This induces an embedding $(\cM\!\downarrow\!\bld n) \ra (\B\!\downarrow\!\bld n)$
whose image is a skeletal subcategory by Lemma~\ref{lemma unique decomposition}. Identifying 
$(\cM\!\downarrow\!\bld n)$ with the poset category  of subsets of $\bld n$, we see that a $\B$-space gives rise to an $\bld n$-cubical diagram for all $\bld n$. Furthermore, it follows from the definitions that a map of $\B$-spaces is a flat $\B$-cofibration if and only if the induced maps of cubical diagrams are cofibrations in the usual sense. Given this, the proof proceeds along the same lines as the proof of the analogous result for $\cI$-spaces, see \cite[Proposition~3.11]{Sagave-Schlichtkrull}.
\end{proof}

\subsection{The braided monoidal structure on $\mathcal{S}^\mathfrak{B}$}\label{subsec:braided-monoidal-B-spaces}
Any category of diagrams in $\mathcal{S}$ indexed by a  braided monoidal small category inherits a 
braided monoidal convolution product from the indexing category. We proceed to explain how this works in the case of $\cS^{\B}$. Given $\B$-spaces $X$ and $Y$, we define the $\B$-space $X\boxtimes Y$ to be the left Kan extension of the ($\B\times \B$)-space
\[
\B\times \B \xr{X\times Y} \cS\times\cS \xr{\times} \cS
\]
along the monoidal structure map $\sqcup\colon \B\times\B\to \B$. Thus, the data specifying a map of 
$\B$-spaces $X\boxtimes Y\to Z$ is equivalent to the data giving a map of ($\B\times \B$)-spaces $X(\bld m)\times Y(\bld n)\to Z(\bld m\sqcup\bld n)$. We also have the level-wise description 
\[
X\boxtimes Y(\bld n)=\colim_{\bld n_1\sqcup\bld n_2\to\bld n} X(\bld n_1)\times Y(\bld n_2)
\]
where the colimit is taken over the comma category $(\sqcup\!\downarrow\!\bld n)$ associated to the monoidal product $\sqcup\co \B\times\B\ra\B$.
 The monoidal unit for the $\boxtimes$-product is the terminal $\B$-space 
$U^{\B}=\B(\bld 0,-)$. Using that $\cS$ is cartesian closed one easily defines the coherence isomorphisms for associativity and unity required to make $\cS^{\B}$ a monoidal category. We specify a braiding
$\br\colon X\boxtimes Y\to Y\boxtimes X$ on $\cS^{\B}$ by requiring that the diagram of 
($\B\times \B$)-spaces
\begin{equation}\label{eq:B-spaces-braiding}
\xymatrix{
X(\bld m)\times Y(\bld n) \ar[rrr]^{\mathrm{twist}} \ar[d] &&&Y(\bld n)\times X(\bld m) \ar[d]\\
X\boxtimes Y(\bld m\sqcup \bld n) \ar[r]^{\br(\bld m\sqcup \bld n)} &
Y\boxtimes X(\bld m\sqcup \bld n) \ar[rr]^{Y\boxtimes X(\chi_{\bld m,\bld n})} &&
Y\boxtimes X(\bld n\sqcup \bld m)
}
\end{equation}
be commutative. The following proposition can either be checked by hand or deduced from the general theory in \cite{Day}. 
\begin{proposition}\label{prop:B-spaces-braided-monoidal}
The category $\cS^{\B}$ equipped with the $\boxtimes$-product, the unit $U^{\B}$, and the braiding $\br$ is a braided monoidal category. \qed
\end{proposition}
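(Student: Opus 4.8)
The plan is to verify that the data introduced just above assembles into a braided monoidal category, using the Day convolution formalism throughout so that coherence is automatic wherever it can be. First I would recall that $\B$ is a (strict) braided monoidal small category by the discussion preceding Proposition~\ref{prop:B-spaces-braided-monoidal}, and that $\cS$ is a symmetric monoidal closed category under the cartesian product. Day's theorem then says that left Kan extension along $\sqcup\co\B\times\B\to\B$ endows $\cS^{\B}$ with a closed monoidal structure, the $\boxtimes$-product, with unit the representable functor $\B(\bld 0,-)=U^{\B}$; since $\bld 0$ is the monoidal unit of $\B$, this is the correct unit, and the coherence isomorphisms for associativity and unitality are produced by Day's construction and satisfy the pentagon and triangle identities. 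So the only genuinely new content is the braiding $\br$ and its two hexagon axioms.

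Next I would check that $\br$ as specified by the commutativity of diagram~\eqref{eq:B-spaces-braiding} is well defined and natural. Using the universal property $X\boxtimes Y\to Z \leftrightarrow \{X(\bld m)\times Y(\bld n)\to Z(\bld m\sqcup\bld n)\}$, a map $X\boxtimes Y\to Y\boxtimes X$ is the same as a natural family $X(\bld m)\times Y(\bld n)\to Y\boxtimes X(\bld m\sqcup\bld n)$; diagram~\eqref{eq:B-spaces-braiding} exhibits $\br$ via the composite of the coordinate twist $X(\bld m)\times Y(\bld n)\to Y(\bld n)\times X(\bld m)$, the structure map $Y(\bld n)\times X(\bld m)\to Y\boxtimes X(\bld n\sqcup\bld m)$, and $Y\boxtimes X(\chi_{\bld m,\bld n}\inv)$ (equivalently, as written, it is pinned down by postcomposing with $Y\boxtimes X(\chi_{\bld m,\bld n})$). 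Naturality in $X$ and $Y$ is immediate since every map in sight is natural; that $\br$ is an isomorphism follows because the twist is an isomorphism and $\chi_{\bld m,\bld n}$ is an isomorphism in $\B$, so one can write down the inverse from $\chi_{\bld m,\bld n}\inv$ and the reverse twist and check the two composites are identities by the same universal property.

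The main obstacle is the two hexagon identities relating $\br$ to the associativity isomorphism. Here I would reduce each hexagon, via the universal property of $\boxtimes$, to a diagram of $(\B\times\B\times\B)$-spaces whose vertices are of the form $W(\bld{n_\sigma})$ for representables $W=X\boxtimes Y\boxtimes Z$ and permutations $\sigma$ of the three tensor factors, with edges induced either by coordinate permutations of $\cS$-factors or by the braiding morphisms $\chi$ of $\B$ applied inside $W$. The coordinate-permutation edges commute on the nose because $\cS$ is symmetric monoidal (the coherence of the symmetry of $\times$), and the $\chi$-edges commute because $\chi$ satisfies the hexagon axioms in the braided monoidal category $\B$ itself — this is exactly the point flagged in the text that "the hexagonal axioms for a braiding only involve morphisms in $\cB$." So the $\cS^{\B}$-hexagons follow by pasting the $\cS$-coherence with the $\B$-coherence; the bookkeeping is to match up the $\sqcup$-reassociations in $\B$ with the associativity isomorphism of $\boxtimes$ produced by Day's construction, which is routine but needs care. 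Alternatively, and more cleanly, I would simply invoke \cite{Day}: the braiding on $\B$ transports to a braiding on the Day convolution $\cS^{\B}$ functorially, with the hexagons for $\cS^{\B}$ being formal consequences of those for $\B$ together with the symmetry of $\cS$; the explicit formula~\eqref{eq:B-spaces-braiding} is then just an unwinding of that general transport, so both hexagons hold. Either route completes the proof. \qed
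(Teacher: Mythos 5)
Your proposal is correct and takes essentially the same route as the paper, which simply notes that the proposition "can either be checked by hand or deduced from the general theory in \cite{Day}" and gives no further argument; your write-up is just a fleshed-out version of both options. One small caveat: the phrase you quote, that "the hexagonal axioms for a braiding only involve morphisms in $\cB$," is used in Section~2 of the paper to establish that $\chi$ is a braiding on $\B$ itself (there $\cB$ is the braid groupoid, a subcategory of $\B$), not to address the hexagons in $\cS^{\B}$ — though your underlying point, that the $\cS^{\B}$-hexagons reduce to those of $\B$ together with the symmetry coherence of $(\cS,\times)$, is the correct one.
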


We use the term \emph{$\B$-space monoid} for a monoid in $\SB$. By the universal property of the 
$\boxtimes$-product, the data needed to specify the unit $u\colon U^{\B}\to A$ and the multiplication 
$\mu\colon A\boxtimes A\to A$ on a $\B$-space monoid $A$ amounts to a zero simplex $u$ in $A(\bld 0)$ and a map of ($\B\times \B$)-spaces $\mu\colon A(\bld m)\times A(\bld n)\to A(\bld m\sqcup \bld n)$ satisfying the usual associativity and unitality conditions. By the definition of the braiding, $A$ is commutative (that is, 
$\mu\circ \br=\mu$) if and only if the diagram of ($\B\times\B$)-spaces 
\begin{equation}\label{eq:commutativity-condition}
\xymatrix{
A(\bld m)\times A(\bld n) \ar[r]^-{\mu} \ar[d]_{\mathrm{twist}}& A(\bld m\sqcup\bld n) \ar[d]^{A(\chi_{\bld m,\bld n})} \\
A(\bld n)\times A(\bld m) \ar[r]^-{\mu} & A(\bld n\sqcup\bld m)
}
\end{equation}
is commutative.

 Recall that given maps $f_1\co X_1\ra Y_1$ and $f_2\co X_2\ra Y_2$ of $\B$-spaces, 
the pushout-product  is the induced map 
$$f_1\Box f_2\co(X_1\boxtimes Y_2)\amalg_{(X_1\boxtimes X_2)}
(Y_1\boxtimes X_2) \ra Y_1\boxtimes Y_2.$$
Following \cite[Definition~4.2.6]{hov} we say that a model structure on $\SB$ is a monoidal model structure if  given any two cofibrations $f_1$ and $f_2$, the pushout-product $f_1\Box f_2$ 
 is a cofibration that is in addition acyclic if $f_1$ or $f_2$ is.

\begin{lemma}\label{lemma pushout-product axiom}
Both the $\B$-model structure and the flat $\B$-model structure 
are monoidal model structures.
\end{lemma}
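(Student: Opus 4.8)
The plan is to reduce the pushout-product axiom to a statement about generating (acyclic) cofibrations, then to verify that statement by identifying $F_{\bld m}(K)\boxtimes F_{\bld n}(L)$ with $F_{\bld m\sqcup\bld n}(K\times L)$. First I would invoke the standard reduction (see \cite[Corollary~4.2.5 and Lemma~4.2.6]{hov}): since both model structures are cofibrantly generated, it suffices to check the pushout-product condition on the sets of generators, i.e.\ that $i\Box i'$ is a cofibration for generating cofibrations $i,i'$, and that $i\Box j$ is an acyclic cofibration when $i$ is a generating cofibration and $j$ a generating acyclic cofibration. Because the classes of (flat) $\B$-cofibrations and of $\B$-equivalences are closed under the relevant constructions, this reduces the lemma to a finite bookkeeping task over the generating sets described in Propositions~\ref{prop B-model structure} and \ref{prop flat model structure}.

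The key computational input is the isomorphism of $\B$-spaces
\[
F_{\bld m}(K)\boxtimes F_{\bld n}(L)\;\cong\; F_{\bld m\sqcup\bld n}(K\times L),
\]
which follows from the universal property of $\boxtimes$ as a left Kan extension along $\sqcup\colon\B\times\B\to\B$ together with the fact that $F_{\bld m}$ is left adjoint to $\mathrm{Ev}_{\bld m}$ (so $\boxtimes$ of free objects is free on the $\sqcup$-product of the indexing objects, smashed with the product of the simplicial sets). Granting this, for generating cofibrations $F_{\bld m}(i)$ and $F_{\bld n}(i')$ with $i\colon\partial\Delta^k\to\Delta^k$, $i'\colon\partial\Delta^{k'}\to\Delta^{k'}$, the pushout-product $F_{\bld m}(i)\Box F_{\bld n}(i')$ is identified with $F_{\bld m\sqcup\bld n}(i\Box i')$, and $i\Box i'$ is a cofibration of simplicial sets; since $F_{\bld p}$ sends cofibrations of simplicial sets to $\B$-cofibrations (this is built into the definition of the generating set $I_\B$, and for the flat structure one uses instead the generators $I_{\mathrm m}$ obtained from $G_{\bld p}$, handled symmetrically via the mixed model structure on $\cB_p$-$\cS$), we are done with the first half. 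For the acyclic case one replaces $i'$ by a generating acyclic cofibration: for the horn inclusions in $J$ the same argument applies verbatim since $j\Box i$ is an acyclic cofibration of simplicial sets, and for the elements $j_\alpha\Box i$ of $\bar J$ one uses that $j_\alpha$ is a level cofibration which is a $\B$-equivalence (the map $\alpha^*\colon F_{\bld n}(\ast)\to F_{\bld m}(\ast)$ becomes an equivalence after applying $(-)\hB$, as is needed for the $\B$-model structure to exist at all), and that $\boxtimes$ with a cofibrant object preserves $\B$-equivalences between cofibrant objects; the pushout-product of $j_\alpha\Box i$ with a further generating cofibration is then an acyclic $\B$-cofibration by the same closure properties.

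The main obstacle is the last point: showing that $\boxtimes$ is homotopically well behaved, i.e.\ that $X\boxtimes(-)$ takes $\B$-equivalences between (flat) cofibrant objects to $\B$-equivalences. The clean way to see this is via the homotopy colimit functor: one has a natural equivalence $(X\boxtimes Y)\hB\simeq X\hB\times Y\hB$ (the homotopy colimit over $\B$ is strong symmetric monoidal up to equivalence, because $\B$ is braided monoidal with $\nerve\B$ contractible, so $\nerve(\B\times\B)\to\nerve\B$ is a weak equivalence compatible with $\sqcup$), from which the required statement follows once one knows that on flat objects $X\boxtimes Y$ computes the correct derived functor. This in turn rests on Proposition~\ref{prop flat B-space} and the analysis of cubical diagrams there, exactly parallel to the $\cI$-space case; I would therefore organize the proof so that the verification of the pushout-product condition on generators produces flat objects (or uses the flat generating sets), and cite the $\cI$-space analogue \cite[Section~6.21 ff.]{Sagave-Schlichtkrull} for the homotopical bookkeeping, noting that the braiding plays no role since $\boxtimes$ and the model structure are defined level-wise and via $\sqcup$ alone.
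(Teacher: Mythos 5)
Your reduction to generators and the identification $F_{\bld m}(K)\boxtimes F_{\bld n}(L)\cong F_{\bld m\sqcup\bld n}(K\times L)$ match the paper's proof, and the treatment of $I_{\B}$ and of the horn-type generators in $J$ is fine. The gap is in the step you yourself flag as the main obstacle: showing that $(j_\alpha\Box i)\Box F_{\bld n}(i')$ is acyclic. You propose to deduce this from the general statement that $X\boxtimes(-)$ preserves $\B$-equivalences between flat (or cofibrant) objects, justified via a natural equivalence $(X\boxtimes Y)\hB\simeq X\hB\times Y\hB$. Two problems. First, contractibility of $\nerve\B$ does not give that equivalence: the comparison map $X\hB\times Y\hB\to(X\boxtimes Y)\hB$ genuinely requires a flatness hypothesis and a nontrivial argument (reduction to free objects plus cell induction), which you do not supply. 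Second, and more seriously for the logical order, in this paper that statement is Lemma~\ref{lemma X and Y flat implies weak equivalnece between times and boxtimes}, whose proof \emph{uses} the monoidality of the flat $\B$-model structure (via Ken Brown's lemma). So invoking it here is circular unless you prove it independently first.

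The fix is more elementary than what you attempt, and is what the paper does. After rewriting $(j_\alpha\Box i)\Box F_{\bld n}(i')\cong j_\alpha\boxtimes F_{\bld n}(*)\times(i\Box i')$ and using that the model structure is simplicial, one only needs $j_\alpha\boxtimes F_{\bld n}(*)$ to be a $\B$-equivalence. For this, no general derived-monoidality input is required: the composite $r_\alpha\circ j_\alpha=\alpha^*$ satisfies $\alpha^*\boxtimes\id_{F_{\bld n}(*)}\cong(\alpha\sqcup\id_{\bld n})^*\colon F_{\bld m\sqcup\bld n}(*)\to F_{\bld m'\sqcup\bld n}(*)$, a map between free $\B$-spaces on a point whose homotopy colimits are both contractible, hence a $\B$-equivalence; and $r_\alpha\boxtimes\id_{F_{\bld n}(*)}$ is a simplicial homotopy equivalence because $r_\alpha$ is one and $-\boxtimes\id_{F_{\bld n}(*)}$ preserves simplicial homotopies. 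Two-out-of-three then gives that $j_\alpha\boxtimes\id_{F_{\bld n}(*)}$ is a $\B$-equivalence. In short, the specific shape of the generators in $\bar J$ (maps between free objects, factored through a mapping cylinder) lets you avoid the general preservation statement entirely; your proposal inverts the paper's order of deduction and leaves its pivotal claim unproved.
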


\begin{proof}
We give the proof for the $\B$-model structure, the proof for the flat case is similar.
By Lemma~3.5 in \cite{SSAlgebrasModules} it suffices to verify the condition for the generating (acyclic) cofibrations. For two generating cofibrations $i,i'$ in $\cS$ it is easy to check that 
$F_\mathbf{m}(i)\Box F_\mathbf{n}(i')$ is isomorphic to $F_{\mathbf{m}\sqcup\mathbf{n}}(i\Box i')$. 
This uses that $F_\mathbf{m}(K)\boxtimes F_\mathbf{n}(L)$ is naturally isomorphic
to $F_{\mathbf{m}\sqcup\mathbf{n}}(K\times L)$ for two simplicial sets $K$ and $L$  and also
that $F_{\mathbf{m}\sqcup\mathbf{n}}$ is a left adjoint and hence commutes with colimits.
Simplicial sets is a monoidal model category, therefore $i\Box i'$ is a cofibration and then so 
is $F_{\mathbf{m}\sqcup\mathbf{n}}(i\Box i')$, since $F_{\mathbf{m}\sqcup\mathbf{n}}$ preserves cofibrations.
Similarly $F_\mathbf{m}(i)\Box F_\mathbf{n}(j)$ is an acyclic cofibration if 
$j$ is a generating acyclic cofibration in $\mathcal{S}$.

Now let $\alpha\co \bld m\ra \bld m'$ be a morphism in $\B$. We  check that 
$(j_\alpha\Box i)\Box F_\mathbf{n}(i')$ is an acyclic cofibration for 
$i\co \partial\Delta^k\ra\Delta^k$ and $i'\co \partial\Delta^l\ra\Delta^l$ generating cofibrations in $\cS$.
Using that $j_{\alpha}\Box i\cong j_{\alpha}\Box F_{\bld 0}(i)$, we get the identifications
\[
(j_\alpha\Box i)\Box F_\mathbf{n}(i')\cong j_\alpha\Box(F_{\bld 0}(i)\Box F_\mathbf{n}(i'))\cong
j_{\alpha}\Box F_{\bld n}(i\Box i')\cong j_{\alpha}\boxtimes F_{\bld n}(*)\times (i\Box i').
\]
Since $j_{\alpha}$ is a cofibration by construction, it follows from the first part of the lemma and the fact that 
the $\B$-model structure is simplicial, that this is a cofibration. For the same reason it therefore suffices to show that $j_{\alpha}\boxtimes F_{\bld n}(*)$ is a $\B$-equivalence.  
For this we apply the two out of three property for $\B$-equivalences to the diagram
$$\xymatrix{
F_\mathbf{m}(\ast)\boxtimes F_\mathbf{n}(*) \ar[d]^{\cong} 
\ar[rr]^{j_\alpha\boxtimes \mathrm{id}_{F_\mathbf{n}(*)}} 
&& M(\alpha^\ast)\boxtimes F_\mathbf{n}(*)
\ar[rr]^{r_\alpha\boxtimes \mathrm{id}_{F_\mathbf{n}(*)}} 
&& F_{\mathbf{m}'}(\ast)\boxtimes F_\mathbf{n}(*) \ar[d]^{\cong}\\
F_{\mathbf{m}\sqcup\mathbf{n}}(\ast) 
\ar[rrrr]^{(\alpha\sqcup\mathrm{id}_\mathbf{n})^\ast}_\sim
&&&& F_{\mathbf{m}'\sqcup\mathbf{n}}(\ast).
}$$
The vertical maps are isomorphisms and the lower horizontal map is a $\B$-equivalence since both 
$F_{\bld m\sqcup\bld n}(\ast)\hB$ and $F_{\bld m'\sqcup\bld n}(\ast)\hB$ are contractible. 
Furthermore, $r_\alpha\boxtimes\id_{F_\bld n(*)}$ is a simplicial homotopy equivalence 
since $r_\alpha$ is a simplicial homotopy equivalence and $-\boxtimes \mathrm{id}_{F_\mathbf{n}(*)}$ 
preserves simplicial homotopy equivalences. This completes the proof. 
\end{proof}

\begin{remark}\label{rem:sagave-schlichtkrull-remark}
 In \cite{Sagave-Schlichtkrull} a projective model structure is defined for a general diagram category 
 $\cS^\cK$ indexed by a small symmetric monoidal category $\cK$ that is well-structured as per 
 Definition~5.5 in \cite{Sagave-Schlichtkrull}. Similarly a flat model structure is defined for 
 $\cS^\cK$ if in addition $\cK$ together with its subcategory of automorphisms form a well-structured 
 relative index category as per Definition~5.2 in \cite{Sagave-Schlichtkrull}. These definitions can be 
 canonically extended to allow braided monoidal categories as index categories such that similar 
 model structures exist. This will not make $\B$ a well-structured index category because the comma category 
 $(\bld k\sqcup -\!\downarrow\!\bld l)$ will in general not have a terminal object for $\bld k$ 
 and $\bld l$ in $\B$. This property is however not used to establish the model structures, so 
 Proposition~\ref{prop B-model structure} and Proposition~\ref{prop flat B-space} are proved as 
 the similar results in \cite{Sagave-Schlichtkrull}. 
 But the proofs of results concerning how the monoidal structure interacts with the model structures do use the mentioned property. Above we have shown that the model structures we consider are monoidal model structures by an alternative argument. It is not clear if the arguments in \cite{Sagave-Schlichtkrull} can be generalized to define model structures on monoids and commutative monoids in $\B$-spaces.
\end{remark}

Let $X$ and $Y$ be $\B$-spaces and consider the natural transformation
$$\nu_{X,Y}\co X\hB \times Y\hB \xrightarrow{\cong}(X\times Y)_{h(\B\times\B)}\ra 
((-\sqcup-)^\ast(X\boxtimes Y))_{h(\B\times\B)}\ra (X\boxtimes Y)\hB$$
where the second map is induced by the universal natural transformation of $\B\times\B$ diagrams
$X(\bld m) \times  Y(\bld n) \ra (X\boxtimes Y )(\bld{m\sqcup n})$. 
These maps gives rise to a monoidal
structure on the functor $(-)\hB$, c.f.\ \cite[Proposition~4.17]{SThom}.

\begin{lemma}\label{lemma X and Y flat implies weak equivalnece between times and boxtimes}
 If both $X$ and $Y$ are flat, then 
 $\nu_{X,Y}\co X\hB \times Y\hB \ra (X\boxtimes Y)\hB$ is a weak equivalence. 
\end{lemma}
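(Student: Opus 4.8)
The plan is to reduce the statement to a cofinality/contractibility analysis of the comma category over which $X\boxtimes Y$ is computed. Recall that $(X\boxtimes Y)(\bld n)=\colim_{\bld n_1\sqcup\bld n_2\to\bld n}X(\bld n_1)\times Y(\bld n_2)$, so by Thomason's theorem on homotopy colimits over Grothendieck constructions, $(X\boxtimes Y)_{h\B}$ is weakly equivalent to the homotopy colimit of $(\bld m,\bld n)\mapsto X(\bld m)\times Y(\bld n)$ over the category $\B\!\wr\!\sqcup$ whose objects are triples $(\bld m,\bld n,\bld m\sqcup\bld n\to\bld k)$, provided the relevant colimits behave well homotopically. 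The source $X_{h\B}\times Y_{h\B}\cong(X\times Y)_{h(\B\times\B)}$ is the homotopy colimit of the same functor over $\B\times\B$, and $\nu_{X,Y}$ is induced by the projection functor $\B\!\wr\!\sqcup\to\B\times\B$. So the first step is to show this projection is homotopy cofinal, or more precisely that it induces an equivalence after taking homotopy colimits of the functor $X\times Y$; since $\B$ has an initial object $\bld 0$, the fibers of the projection have contractible nerve, and a Quillen-type argument handles this.

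The subtlety, and where flatness enters, is that replacing the \emph{strict} colimit defining $X\boxtimes Y(\bld n)$ by a homotopy colimit requires the diagram $(\bld n_1,\bld n_2,\bld n_1\sqcup\bld n_2\to\bld n)\mapsto X(\bld n_1)\times Y(\bld n_2)$ over the comma category $(\sqcup\!\downarrow\!\bld n)$ to be cofibrant, or at least to have its colimit compute the homotopy colimit. Here I would invoke Proposition~\ref{prop flat B-space}: flatness of $X$ and $Y$ means that for each $\bld n$ the $\bld n$-cubical diagrams built from $X$ and $Y$ are genuinely cofibrant cubes, and the intersection condition in diagram~\eqref{diagram condition for flatness} is exactly what guarantees that the relevant pushouts computing $X\boxtimes Y(\bld n)$ are homotopy pushouts. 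Concretely, one filters $(\sqcup\!\downarrow\!\bld n)$ using the decomposition~\eqref{eq:B-identification} of morphisms in $\B$ into an $\cM$-part and a braid part, so that the colimit over $(\sqcup\!\downarrow\!\bld n)$ reduces — using that the braid group actions are free on complements for flat objects — to a colimit over the order-preserving part, which is a cube of cofibrations by Proposition~\ref{prop flat B-space}. This is the analogue of the corresponding step for $\cI$-spaces, and I expect to follow \cite[Proposition~3.11]{Sagave-Schlichtkrull} and the monoidality of $(-)_{h\cI}$ in \cite{SThom} closely.

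The expected main obstacle is precisely this comparison between strict and homotopy colimits at each level $\bld n$: one must check that flatness, as characterized in Proposition~\ref{prop flat B-space}, is strong enough to make every map in the colimit system a cofibration and every pushout a homotopy pushout, \emph{and} that this persists after taking homotopy colimits over $\B$ itself (i.e.\ that $\nu_{X,Y}$ is levelwise the right comparison map and that $(-)_{h\B}$ preserves the relevant equivalences). Once the levelwise statement $X\boxtimes Y(\bld n)\simeq\hocolim_{(\sqcup\downarrow\bld n)}X(\bld n_1)\times Y(\bld n_2)$ is established for flat $X,Y$, the remaining argument is formal: commute homotopy colimits (Fubini for homotopy colimits), use that $\hocolim_{\B}\circ\hocolim_{(\sqcup\downarrow-)}$ agrees with $\hocolim$ over the Grothendieck construction, and identify the latter with $X_{h\B}\times Y_{h\B}$ via the cofinality of $\B\times\B\to\B\!\wr\!\sqcup$ coming from $\bld 0$ being initial. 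I would end by remarking that this proof parallels the proof that the monoidal structure map $\nu$ for $\cI$-spaces in \cite{SThom} is a weak equivalence on flat objects, the only new input being the braid-group bookkeeping afforded by Lemma~\ref{lemma unique decomposition}.
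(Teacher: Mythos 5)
Your proposal takes a genuinely different route from the paper, but it contains a real gap at the step that matters most.

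The paper's proof is much shorter and avoids the comma-category analysis entirely: it invokes the monoidality of the flat $\B$-model structure (Lemma~\ref{lemma pushout-product axiom}) together with Ken Brown's Lemma to reduce to the case where both $X$ and $Y$ are cofibrant in the \emph{projective} ($\B$-)model structure, notes that then $X\boxtimes Y$ is also $\B$-cofibrant, uses Hirschhorn's \cite[Proposition~18.9.4]{hirsch} to identify $\hocolim_\B$ with $\colim_\B$ on $\B$-cofibrant objects, and finishes because $\colim_\B$ is strong symmetric monoidal. You instead try to work with flatness directly, comparing the strict colimit defining $X\boxtimes Y(\bld n)$ with a homotopy colimit over the comma category $(\sqcup\!\downarrow\!\bld n)$.

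The gap is in the sentence where you invoke ``that the braid group actions are free on complements for flat objects.'' That is not the definition of a flat $\B$-space. Re-read the two characterizations in Section~\ref{sec:homotopy-B-spaces}: a map is a \emph{$\B$-cofibration} when each latching map is a cofibration of simplicial sets \emph{and} the $\cB_n$-action on the complement of the image is free; a map is a \emph{flat $\B$-cofibration} when each latching map is merely a cofibration of simplicial sets, with no freeness hypothesis. So the freeness you want to exploit is exactly the extra structure that $\B$-cofibrant objects have and flat objects may lack, and it is precisely to acquire this freeness that the paper performs the cofibrant-replacement step. Without it, the reduction you describe (``the colimit over $(\sqcup\!\downarrow\!\bld n)$ reduces \dots\ to a colimit over the order-preserving part'') does not go through: when quotienting out the braid parts of the comma category you would be taking non-free orbits, and those are not homotopy orbits. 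Relatedly, Remark~\ref{rem:sagave-schlichtkrull-remark} warns you that $\B$ fails to be a well-structured index category (the comma categories $(\bld k\sqcup -\!\downarrow\!\bld l)$ lack terminal objects), so the $\cI$-space results from \cite{Sagave-Schlichtkrull} and \cite{SThom} about the interaction of the monoidal structure with the model structure do \emph{not} transfer directly; the paper already had to give an alternative argument for the pushout-product axiom for exactly this reason. If you first reduce to the $\B$-cofibrant case (as the paper does), then your cofinality and Grothendieck-construction bookkeeping becomes unnecessary and the clean colimit/hocolim comparison finishes the proof.
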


\begin{proof}
The fact that the flat $\B$-model structure is monoidal combined with Ken Brown's Lemma implies that the functor $X\boxtimes (-)$ takes $\B$-equivalences between flat $\B$-spaces to $\B$-equivalences since $X$ is itself flat. Therefore we can take a cofibrant replacement of $Y$ in the $\B$-model structure and it will suffice to prove the result when $Y$ is $\B$-cofibrant. Applying a symmetric argument we reduce to the case where both $X$ and $Y$ are $\B$-cofibrant, which in turn implies that also $X\boxtimes Y$ is $\B$-cofibrant. By Proposition~18.9.4 in \cite{hirsch}
the canonical map $\hocolim_\B Z \ra \colim_B Z$ is a weak equivalence for any $\B$-cofibrant $\B$-space $Z$. The claim now follows since the colimit functor is strong symmetric monoidal.
\end{proof}

\section{$\mathfrak{B}$-categories and braided monoidal structures}\label{sec:B-categories}
In this section we introduce the notion of a $\B$-category and equip the category of such with a braided monoidal structure. We then relate the braided (strict) monoidal objects in this setting to braided (strict) monoidal categories in the usual sense.  Finally we introduce the $\B$-category rectification functor and use this to show that any braided monoidal structure can be rectified to a strictly commutative structure up to $\B$-equivalence. 

\subsection{$\mathfrak{B}$-categories and the Grothendieck construction}\label{subsec:B-categories-Grothendieck}
Let $\Cat$ denote the category of small categories and let $\Cat^{\B}$ be the functor category of 
$\B$-diagrams in $\Cat$. We shall refer to an object in $\Cat^{\B}$ as a \emph{$\B$-category}. Recall that the Grothendieck construction $\B\!\int\! X$ on a $\B$-category $X$ is a category with objects 
$(\bld n, \bld x)$ given by an object $\bld n$ in $\B$ and an object $\bld x$ in the category $X(\bld n)$. A morphism $(\alpha,s)\colon 
(\bld m,\bld x)\to (\bld n,\bld y)$ is a morphism $\alpha\colon \bld m\to \bld n$ in $\B$ together with a morphism $s\colon X(\alpha)(\bld x)\to\bld y$ in $X(\bld n)$. The composition of morphisms is defined by 
\[
(\beta,t)\circ (\alpha,s) =(\beta\circ \alpha, t\circ X(\beta)(s)).
\]
This construction defines a functor $\B\!\int\colon \Cat^{\B}\to \Cat$ in the obvious way. We think of 
$\B\!\int \! X$ as the homotopy colimit of $X$ in $\Cat$. This is justified by Thomason's homotopy colimit theorem \cite[Theorem~1.2]{T} which states that there is a natural weak equivalence
\begin{equation}\label{eq:Thomason-equivalence}
\eta\colon \hocolim_{\bld n\in \B} \nerve(X(\bld n)) \xr{\simeq} \textstyle \nerve(\B\!\int\!X).
\end{equation}
Let us say that a functor $Y\to Y'$ between small categories is a weak equivalence if the induced map of nerves $\nerve(Y)\to \nerve(Y')$ is a weak equivalence of simplicial sets. We say that a map of $\B$-categories $X\to X'$ is a \emph{$\B$-equivalence} if the map of Grothendieck constructions $\B\!\int\!X\to \B\!\int\!X'$ is a weak equivalence in this sense. By the natural weak equivalence in \eqref{eq:Thomason-equivalence} this is equivalent to the level-wise nerve $\nerve(X)\to \nerve(X')$ being a $\B$-equivalence in the sense of the previous section. Let $w$ denote the class of weak equivalences in $\Cat$, and let $w_{\B}$ be the class of 
$\B$-equivalences in $\Cat^{\B}$. With the given definition of $\B$-equivalences it is not surprising that the categories $\Cat^{\B}$ and $\Cat$ become equivalent after localization with respect to these classes of equivalences. For the convenience of the reader we have collected the relevant background material on localization in Appendix~\ref{app:localization}. Let us write $\Delta\colon \Cat\to \Cat^{\B}$ for the functor that takes a small category to the corresponding constant $\B$-category. 

\begin{proposition}\label{prop:Bint-Delta-equivalence}
The functors $\B\!\int$ and $\Delta$ induce an equivalence of the localized categories
\[
\textstyle\B\!\int \colon \Cat^{\B}[w_{\B}^{-1}]\simeq \Cat[w^{-1}] :\!\Delta.
\]
\end{proposition}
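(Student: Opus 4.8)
The plan is to deduce Proposition~\ref{prop:Bint-Delta-equivalence} from the abstract machinery for producing equivalences of localized categories that is developed in Appendix~\ref{app:localization}, applied to the adjunction $\colim_{\B}\dashv\Delta$ at the level of $\B$-categories together with the fact that $\B\!\int$ is a homotopy-theoretic model for $\colim_{\B}$. More precisely, I would set up the adjoint pair $(\B\!\int,\Delta)$ (or rather first $(\colim_{\B},\Delta)$ between $\Cat^{\B}$ and $\Cat$, noting that $\Delta$ is the constant-diagram functor and is fully faithful), and then verify the hypotheses of the relevant localization theorem in the appendix: namely that $\Delta$ sends weak equivalences in $\Cat$ to $\B$-equivalences in $\Cat^{\B}$, that $\B\!\int$ sends $\B$-equivalences to weak equivalences (true by the very \emph{definition} of $w_{\B}$), that the unit $\id\to \Delta\circ\B\!\int$ or counit $\B\!\int\circ\,\Delta\to\id$ is a weak equivalence on one side, and that there are enough objects on which the derived unit/counit is an equivalence.

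The key technical inputs are: (i) Thomason's homotopy colimit theorem \eqref{eq:Thomason-equivalence}, which identifies $\nerve(\B\!\int X)$ with $\hocolim_{\B}\nerve(X)$ naturally, so that the homotopy theory of $\B$-categories is governed by that of $\B$-spaces through the level-wise nerve; (ii) the fact, established in the previous section, that $\colim_{\B}\colon \SB\rightleftarrows\cS\colon\Delta$ is a Quillen equivalence, which gives that $\Delta\colon \cS\to\SB$ is a homotopy equivalence onto its image after localizing at $w_{\B}$, and in particular that $\Delta$ preserves and detects the relevant equivalences; and (iii) the observation that $\B$ has an initial object $\bld 0$, so that $\nerve\B$ is contractible and hence for a constant $\B$-category $\Delta\cC$ the map $\hocolim_{\B}\nerve(\Delta\cC)\to \nerve(\cC)$ is a weak equivalence — equivalently $\B\!\int\Delta\cC\to\cC$ is a weak equivalence — which supplies one of the two triangle identities up to weak equivalence. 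For the other direction one uses that the canonical map $X\to \Delta\B\!\int X$ (adjoint to the identity, i.e. the cocone of the Grothendieck construction) becomes an equivalence after applying $\B\!\int$ again, or more cleanly invokes the appendix's criterion that an adjunction descends to an equivalence of localizations as soon as one functor creates equivalences and the appropriate derived (co)unit is an equivalence on a generating/detecting class.

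Concretely, the steps in order would be: first, record that $\B\!\int$ takes $w_{\B}$ into $w$ by definition and that $\Delta$ takes $w$ into $w_{\B}$ (immediate from \eqref{eq:Thomason-equivalence} plus the analogous statement for $\B$-spaces, since $\nerve\Delta\cC$ is the constant $\B$-space at $\nerve\cC$ and $\hocolim_{\B}$ of a constant diagram over a category with initial object recovers the value up to weak equivalence); second, identify the derived counit $\B\!\int\Delta\cC\xrightarrow{\simeq}\cC$ as a weak equivalence using contractibility of $\nerve\B$; third, show the unit $X\to\Delta\B\!\int X$ is a $\B$-equivalence for every $\B$-category $X$, which after applying $\nerve$ and Thomason amounts to the statement that $\hocolim_{\B}\nerve(X)\to \Delta\hocolim_{\B}\nerve(X)$ (constant diagram) induces an equivalence on homotopy colimits — again a consequence of $\nerve\B$ being contractible, i.e. of the Quillen equivalence $(\colim_{\B},\Delta)$ for $\B$-spaces; and fourth, quote the appendix's general proposition that an adjunction satisfying these conditions induces an equivalence of localized categories, to conclude.

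I expect the main obstacle to be purely organizational rather than substantive: matching the precise hypotheses of the localization lemma in Appendix~\ref{app:localization} to the situation at hand, in particular being careful that $\B\!\int$ is genuinely the left adjoint (or that we work with the correct variance — the Grothendieck construction is a model for the \emph{homotopy} colimit, not the strict one, so one should either replace $\colim_{\B}$ by $\B\!\int$ throughout using \eqref{eq:Thomason-equivalence} as the comparison, or verify that the discrepancy between $\colim_{\B}$ and $\B\!\int$ is absorbed into the localization). The only real homotopical content — that $\nerve\B\simeq *$ because $\B$ has an initial object — is already in hand, so once the bookkeeping is arranged the proof is short.
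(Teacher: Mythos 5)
Your treatment of the composite $\B\!\int\!\circ\,\Delta$ is exactly the paper's: $\B\!\int\!\Delta\cC\cong\B\times\cC\to\cC$ is a weak equivalence because $\B$ has an initial object. The gap is in the other composite. The criterion actually proved in Appendix~\ref{app:localization} (Proposition~\ref{prop:localization-equivalence}) is not an adjunction criterion: it asks for an honest chain of natural transformations lying in $w_{\B}$ connecting the identity of $\Cat^{\B}$ to $\Delta\circ\B\!\int$. Your candidate for such a transformation, the ``unit'' $X\to\Delta(\B\!\int\!X)$ given by the cocone $\bld x\mapsto(\bld n,\bld x)$, does not exist as a map of $\B$-categories: for $\alpha\colon\bld m\to\bld n$ the naturality square commutes only up to the morphisms $(\alpha,\id)$, i.e.\ the cocone of the Grothendieck construction is lax natural rather than strictly natural. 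Nor can you substitute the genuine unit $X\to\Delta(\colim_{\B}X)$ of the adjunction $\colim_{\B}\dashv\Delta$, since $\colim_{\B}X$ is not weakly equivalent to $\B\!\int\!X$ for general $X$ (strict colimits in $\Cat$ are not homotopy invariant, and the Quillen equivalence you cite identifies $\colim_\B$ with the homotopy colimit only on cofibrant objects); the paper's remark following the proof even points out that the true right adjoint of $\B\!\int$ is $\Cat((\bullet\down\B),-)$ and that this adjunction is not useful here.

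The missing idea is a device for rigidifying the lax cocone, and the paper supplies one: the bar resolution $\overline X(\bld n)=(\B\down\bld n)\!\int\!X\circ\pi_{n}$, which admits a strictly natural level-wise weak equivalence $\ev\colon\overline X\to X$ (Lemma~\ref{lem:ev-equivalence}) and a strictly natural $\B$-equivalence $\pi\colon\overline X\to\Delta(\B\!\int\!X)$ (Lemma~\ref{lem:pi-equivalence}, whose proof uses precisely the transformation $(\gamma,\id)$ that obstructs your direct approach). The resulting zigzag $X\leftarrow\overline X\to\Delta(\B\!\int\!X)$ is what feeds into Proposition~\ref{prop:localization-equivalence}. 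Your homotopical inputs --- Thomason's theorem and the contractibility of $\nerve\B$ --- are the right ones and do carry the day once the resolution is in place, but without it (or some equivalent strictification) the argument does not close.
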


For the proof of the proposition we need to introduce an auxiliary endofunctor on $\Cat^{\B}$. Let 
$(\B\down\bullet)$ be the $\B$-category defined by the comma categories $(\B\down \bld n)$. By definition, an object of $(\B\down \bld n)$ is a pair $(\bld m,\gamma)$ given by an object $\bld m$ and a morphism $\gamma\colon \bld m\to \bld n$ in $\B$. A morphism $\alpha\colon(\bld m_1,\gamma_1) \to
(\bld m_2,\gamma_2)$ is a morphism $\alpha\colon\bld m_1\to\bld m_2$ in $\B$ such that $\gamma_1=\gamma_2\circ\alpha$. Let $\pi_n\colon (\B\down\bld n)\to \B$ be the forgetful functor mapping 
$(\bld m,\gamma)$ to $\bld m$. Clearly these functors assemble to a map of $\B$-categories $\pi\colon (\B\down \bullet)\to\Delta(\B)$. Given a $\B$-category $X$, the \emph{bar resolution} $\overline X$ is the $\B$-category defined by the 
level-wise Grothendieck constructions 
\[
\textstyle\overline X(\bld n)=(\B\down \bld n)\!\int\! X\circ\pi_n.
\]
The structure maps making $\overline X$ a $\B$-category are inherited from the $\B$-category 
$(\B\down\bullet)$ in the obvious way. Our use of the term ``bar resolution'' is motivated by the analogous bar resolution for 
$\B$-spaces that we shall consider in Section~\ref{subsec:E2-rectification}. 
\begin{lemma}\label{lem:ev-equivalence}
There is a natural level-wise weak equivalence $\ev\colon \overline X\to X$.
\end{lemma}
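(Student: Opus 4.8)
The plan is to write $\ev$ down explicitly and then prove the level-wise statement by an elementary adjoint/natural-transformation argument rather than appealing to Thomason's theorem. For each $\bld n$, define $\ev_{\bld n}\colon \overline X(\bld n)=(\B\down\bld n)\!\int\! X\circ\pi_n\to X(\bld n)$ by sending an object $((\bld m,\gamma),\bld x)$ to $X(\gamma)(\bld x)$, and a morphism $(\alpha,s)\colon ((\bld m_1,\gamma_1),\bld x_1)\to ((\bld m_2,\gamma_2),\bld x_2)$ to $X(\gamma_2)(s)$. This makes sense because a morphism of $(\B\down\bld n)$ satisfies $\gamma_2\circ\alpha=\gamma_1$, so that $X(\gamma_2)(X(\alpha)(\bld x_1))=X(\gamma_1)(\bld x_1)$ is the correct source; functoriality is a direct check from the composition law of the Grothendieck construction together with $X(\gamma_3)\circ X(\beta)=X(\gamma_3\circ\beta)$. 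Naturality of $\ev=(\ev_{\bld n})_{\bld n}$ in $\bld n$ is then immediate: for $\beta\colon\bld n\to\bld n'$ the structure map $\overline X(\beta)$ merely postcomposes the indexing morphism $\gamma$ with $\beta$, and $X(\beta)\circ X(\gamma)=X(\beta\circ\gamma)$, so the relevant square commutes.

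Next I would show that each $\ev_{\bld n}$ is a weak equivalence. Introduce the section $j_{\bld n}\colon X(\bld n)\to\overline X(\bld n)$, $\bld x\mapsto ((\bld n,\id_{\bld n}),\bld x)$, $t\mapsto(\id_{\bld n},t)$; then $\ev_{\bld n}\circ j_{\bld n}=\id_{X(\bld n)}$ on the nose. In the other direction, the morphisms $(\gamma,\id_{X(\gamma)(\bld x)})\colon ((\bld m,\gamma),\bld x)\to((\bld n,\id_{\bld n}),X(\gamma)(\bld x))$ assemble into a natural transformation $\id_{\overline X(\bld n)}\Rightarrow j_{\bld n}\circ\ev_{\bld n}$. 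Since a natural transformation of functors induces a simplicial homotopy on nerves, $\nerve(\ev_{\bld n})$ and $\nerve(j_{\bld n})$ are mutually inverse homotopy equivalences, so $\ev_{\bld n}$ is a weak equivalence, as required. (Alternatively one can invoke Quillen's Theorem A: for each object $\bld y$ of $X(\bld n)$ the comma category $(\ev_{\bld n}\down\bld y)$ has terminal object $\bigl(((\bld n,\id_{\bld n}),\bld y),\id_{\bld y}\bigr)$ and is therefore contractible.)

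The only genuine work is bookkeeping. Verifying that $(\gamma,\id)$ is natural requires unwinding the composition law $(\beta,t)\circ(\alpha,s)=(\beta\circ\alpha,\,t\circ (X\circ\pi_n)(\beta)(s))$, using that $\pi_n$ acts as the identity on underlying morphisms of $\B$; one checks that both composites in the naturality square collapse to $(\gamma_1,X(\gamma_2)(s))$. I expect this verification, together with the analogous functoriality check for $\ev_{\bld n}$, to be the main obstacle, though it is entirely routine — the real reason the bar resolution is acyclic is simply that $(\B\down\bld n)$ has the terminal object $(\bld n,\id_{\bld n})$, which is precisely what the section $j_{\bld n}$ exploits.
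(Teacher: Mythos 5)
Your proof is correct and is essentially the same as the paper's: the same explicit formula for $\ev$, the same section $j_{\bld n}$ with $\ev_{\bld n}\circ j_{\bld n}=\id$, and the same natural transformation $\id\Rightarrow j_{\bld n}\circ\ev_{\bld n}$ exploiting the terminal object of $(\B\down\bld n)$. The Quillen Theorem A aside and the explicit naturality-in-$\bld n$ check are fine additions but don't change the argument.
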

\begin{proof}
For each $\bld n$ we define a functor 
$
\ev(\bld n)\colon \textstyle(\B\down \bld n)\!\int\! X\circ\pi_n\to X(\bld n).
$
An object in the domain has the form $((\bld m,\gamma),\bld x)$ with $(\bld m,\gamma)$ in $(\B\down \bld n)$ and $\bld x$ an object in $X(\bld m)$. We map this to the object $X(\gamma)(\bld x)$ in $X(\bld n)$. 
A morphism from $((\bld m_1,\gamma_1),\bld x_1)$ to  $((\bld m_2,\gamma_2),\bld x_2)$ amounts to a morphism $\alpha\colon (\bld m_1,\gamma_1)\to (\bld m_2,\gamma_2)$ in $(\B\down\bld n)$ together with a morphism $s\colon X(\alpha)(\bld x_1)\to \bld x_2$ in $X(\bld m_2)$. We map such a morphism to the morphism 
\[
X(\gamma_2)(s)\colon X(\gamma_1)(\bld x_1)=X(\gamma_2)(X(\alpha)(\bld x_1))\to X(\gamma_2)(\bld x_2)
\]
in $X(\bld n)$. These functors are compatible when $\bld n$ varies and give rise to the map of $\B$-categories in the lemma. To show that $\ev(\bld n)$ is a weak equivalence, we consider the canonical functor 
\[
j(\bld n)\colon X(\bld n)\to \textstyle(\B\down \bld n)\!\int\! X\circ\pi_n,\qquad \bld x\mapsto (1_n,\bld x)
\]
where $1_n$ denotes the terminal object in $(\B\down \bld n)$. Then $\ev(\bld n)\circ j(\bld n)$ is the identity functor on $X(\bld n)$ and it is easy to see that there is a natural transformation from the identity functor on $\textstyle(\B\down \bld n)\!\int\! X\circ\pi_n$ to $j(\bld n)\circ \ev(\bld n)$. Hence $j(\bld n)$ defines a homotopy inverse of $\ev(\bld n)$.
\end{proof}

\begin{lemma}\label{lem:pi-equivalence}
There is a natural $\B$-equivalence $\pi\colon \overline X\to \Delta(\B\!\int\! X)$.
\end{lemma}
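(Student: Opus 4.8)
The plan is to realize $\pi$ as the evident level-wise ``forget the structure morphism'' functor and then reduce the claim that it is a $\B$-equivalence to Lemma~\ref{lem:ev-equivalence} together with the contractibility of $\nerve\B$. For each $\bld n$, define $\pi(\bld n)\co \overline X(\bld n)=(\B\down\bld n)\!\int\!X\circ\pi_n\to \B\!\int\!X$ by sending an object $((\bld m,\gamma),\bld x)$ to $(\bld m,\bld x)$ and a morphism $(\alpha,s)$ to $(\alpha,s)$, i.e.\ by discarding the map $\gamma\co\bld m\to\bld n$. Since the structure map $\overline X(\beta)$ attached to $\beta\co\bld n\to\bld n'$ merely replaces $\gamma$ by $\beta\circ\gamma$ and leaves the pair $(\bld m,\bld x)$ and the morphisms unchanged, the functors $\pi(\bld n)$ assemble into a map of $\B$-categories $\pi\co\overline X\to\Delta(\B\!\int\!X)$, manifestly natural in $X$.

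Next I would reduce to a single weak equivalence. For any small category $\cC$ there is a natural identification $\B\!\int\!\Delta(\cC)\cong \B\times\cC$ (the Grothendieck construction of a constant diagram), and the projection $\mathrm{pr}\co \B\times\B\!\int\!X\to \B\!\int\!X$ is a weak equivalence because $\B$ has an initial object, so $\nerve\B$ is contractible. Under this identification $\B\!\int\!\pi$ is the functor $(\bld n,((\bld m,\gamma),\bld x))\mapsto (\bld n,(\bld m,\bld x))$, hence $\mathrm{pr}\circ \B\!\int\!\pi$ equals the functor $p\co \B\!\int\!\overline X\to \B\!\int\!X$ given by $p(\bld n,((\bld m,\gamma),\bld x))=(\bld m,\bld x)$. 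By the two out of three property for weak equivalences it therefore suffices to prove that $p$ is a weak equivalence, for then $\B\!\int\!\pi$ is one, and so $\pi$ is a $\B$-equivalence.

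Finally I would show $p$ is a weak equivalence by comparing it with $\B\!\int\!\ev$, where $\ev\co\overline X\to X$ is the level-wise weak equivalence of Lemma~\ref{lem:ev-equivalence}; recall that $\B\!\int\!\ev$ sends $(\bld n,((\bld m,\gamma),\bld x))$ to $(\bld n,X(\gamma)(\bld x))$. The morphisms $(\gamma,\id)\co (\bld m,\bld x)\to(\bld n,X(\gamma)(\bld x))$ of $\B\!\int\!X$ define a natural transformation $p\Rightarrow \B\!\int\!\ev$; its naturality is precisely the relation $\gamma'\circ\alpha=\beta\circ\gamma$ built into a morphism of $\B\!\int\!\overline X$ with $\B$-component $\beta$. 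Consequently $\nerve(p)$ and $\nerve(\B\!\int\!\ev)$ are homotopic, and $\B\!\int\!\ev$ is a weak equivalence because $\ev$ is a level-wise weak equivalence, using Thomason's equivalence \eqref{eq:Thomason-equivalence} and the homotopy invariance of homotopy colimits. Hence $p$, and with it $\B\!\int\!\pi$ and $\pi$, are weak equivalences. The only non-formal step is the bookkeeping verifying that $(\gamma,\id)$ is natural, which amounts to unwinding how $\gamma$, $\alpha$ and $\beta$ compose inside the iterated Grothendieck construction; an alternative would be to identify $\B\!\int\!\overline X$ with the Grothendieck construction of $X$ precomposed with the domain functor over the arrow category of $\B$, which is homotopy cofinal since it admits a left adjoint, but the comparison with $\ev$ is more economical.
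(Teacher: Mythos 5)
Your proposal is correct and follows essentially the same route as the paper: the same level-wise forgetful functor $\pi$, the same identification $\B\!\int\!\Delta(\B\!\int\!X)\cong\B\times(\B\!\int\!X)$ with the projection a weak equivalence since $\B$ has an initial object, and the same natural transformation $(\gamma,\id_{X(\gamma)(\bld x)})$ comparing the composite with $\B\!\int\!\ev$ from Lemma~\ref{lem:ev-equivalence}. The naturality check you flag, reducing to the relation $\gamma'\circ\alpha=\beta\circ\gamma$, is exactly the point the paper leaves implicit.
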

\begin{proof}
For each $\bld n$ the forgetful functor $\pi_n\colon (\B\down \bld n)\to \B$ gives rise to a functor 
\[
\textstyle(\B\down \bld n)\!\int\! X\circ\pi_n\to \B\!\int\!X
\]
by mapping an object $((\bld m,\gamma),\bld x)$ to $(\bld m,\bld x)$. Letting $\bld n$ vary this defines the map of $\B$-categories in the lemma. We must show that the functor $\B\!\int\!\pi$ is a weak equivalence and for this we consider the diagram of categories
\[
\xymatrix{
\textstyle\B\!\int \!\big( (\B\down\bullet)\!\int\!X\circ\pi_{\bullet}\big) \ar[r]^-{\B\!\int\!\pi} 
\ar[d]_{\B\!\int\!\ev}
& 
\B\!\int\!\Delta(\B\!\int\! X) \ar[d]^{\cong}\\
\B\!\int\! X& \ar[l]_-{\mathrm{proj}}  \B\times (\B\!\int\!X)
}
\]
where $\mathrm{proj}$ denotes the projection away from the first variable. This diagram is not commutative but we claim that it commutes up to a natural transformation. Indeed, consider an object $(\bld n,(\bld m,\gamma),\bld x)$ with $\bld n$ in $\B$, $(\bld m,\gamma)$ an object in $(\B\down \bld n)$, and $\bld x$ an object in $X(\bld m)$. The functor $\B\!\int\!\ev$ maps this to $(\bld n,X(\gamma)(\bld x))$ whereas the other composition maps it to $(\bld m,\bld x)$. It is easy to see that the morphisms 
\[
(\gamma,\id_{X(\gamma)(\bld x)})\colon (\bld m,\bld x)\to (\bld n,X(\gamma)(\bld x))
\]
define a natural transformation between these functors. Since $\B\!\int\!\ev$ is a weak equivalence by Lemma~\ref{lem:ev-equivalence} and $\mathrm{proj}$ is a weak equivalence because $\B$ has an initial object, it follows that also $\B\!\int\!\pi$ is a weak equivalence.
\end{proof}

\begin{proof}[Proof of Proposition~\ref{prop:Bint-Delta-equivalence}]
We first observe that the localization of $\Cat$ with respect to $w$ actually exists since Thomason has realized it as the homotopy category of a suitable model structure, see \cite{Thomason_Cat}. With terminology from Appendix~\ref{app:localization}, Lemmas~\ref{lem:ev-equivalence} and \ref{lem:pi-equivalence} give a chain of natural 
$\B$-equivalences relating $\Delta(\B\!\int\! X)$ to $X$. The other composition $\B\!\int\!\Delta Y$ can be identified with the product category $\B\times Y$ which is weakly equivalent to $Y$ since $\B$ has an initial object. Hence the result follows from Proposition~\ref{prop:localization-equivalence}.  
\end{proof}

\begin{remark}
Let $(\bullet\down\B)$ denote the $\B^{\op}$-category defined by the comma categories 
$(\bld n\down \B)$. The universal property of the Grothendieck construction established in \cite[Proposition~1.3.1]{T} implies that $\B\!\int\!X$ can be identified with the coend $(\bullet\down \B)\times_{\B}X$ in $\Cat$. This in turn implies that the functor $\B\!\int$ participates as the left adjoint in an adjunction
\[
\textstyle\B\!\int\colon \Cat^{\B}\rightleftarrows \Cat : \!\Cat((\bullet\down\B),-).
\]  
The right adjoint takes a small category $Y$ to the $\B$-category for which the objects of $\Cat((\bld n\down\B),Y)$ are the functors from $(\bld n\down \B)$ to $Y$ and the morphisms are the natural transformations. However, this adjunction is not so useful for our purposes since it cannot be promoted to an adjunction between the braided monoidal structures we shall consider later.
\end{remark}

\subsection{Braided monoidal structures}
As in the case of $\B$-spaces considered in Section~\ref{subsec:braided-monoidal-B-spaces}, the braided monoidal structure of $\B$ induces a braided monoidal structure on $\Cat^{\B}$: Given $\B$-categories $X$ and $Y$, we define $X\boxtimes Y$ to be the left Kan extension of the ($\B\times \B$)-category 
\[
\B\times \B \xr{X\times Y} \Cat\times\Cat \xr{\times} \Cat
\]
along the monoidal structure map $\sqcup\colon \B\times\B\to \B$. Thus, the data specifying a map of 
$\B$-categories $X\boxtimes Y\to Z$ is equivalent to the data giving a map of ($\B\times \B$)-categories
 $X(\bld m)\times Y(\bld n)\to Z(\bld m\sqcup\bld n)$. We also have the level-wise description 
\[
X\boxtimes Y(\bld n)=\colim_{\bld n_1\sqcup\bld n_2\to\bld n} X(\bld n_1)\times Y(\bld n_2).
\]
The monoidal unit for the $\boxtimes$-product is the terminal $\B$-category $U^{\B}=\B(\bld 0,-)$. Using that $\Cat$ is cartesian closed one easily defines the coherence isomorphisms for associativity and unity required to make $\Cat^{\B}$ a monoidal category. We specify a braiding
$\br\colon X\boxtimes Y\to Y\boxtimes X$ on $\Cat^{\B}$ by requiring that the categorical analogue of the diagram \eqref{eq:B-spaces-braiding} be commutative. The following is the categorical analogue of Proposition~\ref{prop:B-spaces-braided-monoidal}.
\begin{proposition}
The category $\Cat^{\B}$ equipped with the $\boxtimes$-product, the unit $U^{\B}$, and the braiding $\br$ is a braided monoidal category. \qed
\end{proposition}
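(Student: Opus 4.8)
The plan is to follow the two-pronged strategy already indicated for the $\B$-space version (Proposition~\ref{prop:B-spaces-braided-monoidal}): the statement can either be extracted from Day's general theory of convolution products \cite{Day} or checked directly, and the direct check transfers essentially verbatim from the case of $\cS^{\B}$ treated in Section~\ref{subsec:braided-monoidal-B-spaces}. For the quick route one observes that $\Cat$ with its cartesian product is a complete and cocomplete \emph{symmetric} monoidal closed category and that $(\B,\sqcup,\bld 0)$ is a braided monoidal small category, trivially $\Cat$-enriched since its hom-sets are discrete. Day's theorem then produces the convolution monoidal structure $\boxtimes$ on $\Cat^{\B}$ with unit $U^{\B}=\B(\bld 0,-)$, and — because the value category $\Cat$ is symmetric while the indexing category $\B$ is only braided — the braiding $\chi$ of $\B$ induces a braiding on this convolution product; this is precisely the structure described above. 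I would nevertheless also spell out the elementary argument, since it is short and mirrors the $\B$-space discussion.

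First I would record the underlying monoidal structure. The associativity isomorphism for $\boxtimes$ comes from the universal property of the iterated left Kan extension: maps $(X\boxtimes Y)\boxtimes Z\to W$ and $X\boxtimes(Y\boxtimes Z)\to W$ both correspond to maps of $(\B\times\B\times\B)$-categories $X(\bld l)\times Y(\bld m)\times Z(\bld n)\to W(\bld l\sqcup\bld m\sqcup\bld n)$, using strict associativity of $\sqcup$ on $\B$ and associativity of $\times$ on $\Cat$; likewise the unit isomorphisms come from $\bld 0$ being a strict two-sided unit for $\sqcup$ and the terminal category $*$ being a unit for $\times$. The pentagon and triangle identities then reduce to the corresponding coherences in $(\Cat,\times,*)$, exactly as in the cartesian-closed bookkeeping used for $\cS^{\B}$.

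Next I would treat the braiding. The categorical analogue of the diagram \eqref{eq:B-spaces-braiding} determines, for each pair $(\bld m,\bld n)$, a functor $X\boxtimes Y(\bld m\sqcup\bld n)\to Y\boxtimes X(\bld m\sqcup\bld n)$, and since these are compatible with the $(\B\times\B)$-structure maps they assemble, through the universal property of the Kan extension (equivalently the colimit description $X\boxtimes Y(\bld n)=\colim_{\bld n_1\sqcup\bld n_2\to\bld n}X(\bld n_1)\times Y(\bld n_2)$), to a well-defined natural morphism of $\B$-categories $\br\colon X\boxtimes Y\to Y\boxtimes X$; it is an isomorphism because the twist on $\Cat$ and the braiding morphisms $\chi_{\bld m,\bld n}$ of $\B$ are isomorphisms. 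The two hexagon identities for $\br$ translate, again via the universal property of $\boxtimes$, into identities between functors $X(\bld l)\times Y(\bld m)\times Z(\bld n)\to (Z\boxtimes Y\boxtimes X)(\bld l\sqcup\bld m\sqcup\bld n)$, and unravelling the definitions these reduce to the hexagon identities for the braiding $\chi$ of $\B$ combined with the symmetry coherence of the cartesian twist on $\Cat$. The hexagon identities for $\chi$, and the naturality of $\chi$ with respect to the generating morphisms $\partial^i_n$, were already established in Section~\ref{sec:braided-injections} (they involve only morphisms of the braid subcategory $\cB$), so naturality of $\br$ in the $\B$-variables is automatic.

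The only point needing a little care — and the main ``obstacle'', such as it is — is checking that all these formulas genuinely descend from the $(\B\times\B)$-level to well-defined maps of $\B$-categories, i.e.\ are independent of the presentations of objects in the relevant Kan extensions; this is handled uniformly by the universal property of left Kan extensions. Since every coherence diagram in question is pulled back from a coherence diagram already known to hold for $(\B,\sqcup,\bld 0,\chi)$ or for $(\Cat,\times,*,\mathrm{twist})$, nothing further is required, and the same argument as for $\cS^{\B}$ yields the result; alternatively one simply cites \cite{Day}.
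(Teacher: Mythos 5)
Your proposal is correct and follows exactly the route the paper indicates: the paper simply transfers the proof of Proposition~\ref{prop:B-spaces-braided-monoidal} (which it attributes either to a direct check or to Day's convolution theory \cite{Day}), and you have filled in both of those arguments faithfully. One cosmetic slip: in the hexagon check the codomain should read $(Y\boxtimes Z\boxtimes X)(\bld l\sqcup\bld m\sqcup\bld n)$ rather than $(Z\boxtimes Y\boxtimes X)(\cdots)$, but this does not affect the substance.
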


We use the term \emph{$\B$-category monoid} for a monoid in $\Cat^{\B}$. By the universal property of the 
$\boxtimes$-product, the data needed to specify the unit $U^{\B}\to A$ and the multiplication 
$\otimes\colon A\boxtimes A\to A$ on a $\B$-category monoid $A$ amounts to a unit object $\bld u$ in $A(\bld 0)$ and a map of ($\B\times \B$)-categories $\otimes\colon A(\bld m)\times A(\bld n)\to A(\bld m\sqcup \bld n)$ satisfying the usual associativity and unitality conditions. By the definition of the braiding, $A$ is commutative (that is, 
$\otimes\circ \br=\otimes$) if and only if the categorical version of the diagram \eqref{eq:commutativity-condition} is commutative. 

In order to talk about braided $\B$-category monoids we need the notion of a natural transformation between maps of $\B$-categories: Given maps of $\B$-categories $f,g\colon X\to Y$, a natural transformation $\phi\colon f\Rightarrow g$ is a family of natural transformations 
$\phi(\bld n)\colon f(\bld n) \Rightarrow g(\bld n)$ such that for any morphism $\alpha\colon \bld m\to\bld n$
in $\B$ we have an equality of natural transformations 
$\phi(\bld n)\circ X(\alpha) = Y(\alpha)\circ \phi(\bld m)$ between the functors $f(\bld n)\circ X(\alpha)=Y(\alpha)\circ f(\bld m)$ and $g(\bld n)\circ X(\alpha)=Y(\alpha)\circ g(\bld m)$.
Here the symbol $\circ$ denotes the usual ``horizontal'' composition, and we use the notation $X(\alpha)$ and $Y(\alpha)$ both for the functors defined by $X$ and $Y$ and for the corresponding identity natural transformations.
 A braiding of a $\B$-category monoid $A$ is then a natural transformation $\Theta\colon \otimes\Rightarrow \otimes\circ\br$ as depicted in the diagram
\begin{equation}\label{eq:Theta-braiding}
\xymatrix{
A\boxtimes A \ar[rr]^{\br} \ar[dr]_{\otimes}^{\ \ \ \ \ \Theta \ \Rightarrow}& & A\boxtimes A \ar[dl]^{\otimes}\\
& A, &
}
\end{equation}
such that $\Theta$ has an inverse and the familiar axioms for a braided monoidal structure holds. In order to formulate this in a convenient manner we observe that the data defining a natural isomorphism $\Theta$ as above amounts to a natural isomorphism
\[
\Theta_{\bld m,\bld n}\colon \bld a\otimes \bld b\to A(\chi_{\bld m,\bld n}^{-1})(\bld b\otimes \bld a)
\]
of functors $A(\bld m)\times A(\bld n)\to A(\bld m\sqcup\bld n)$ for all $(\bld m,\bld n)$, with the requirement that for each pair of morphisms $\alpha\colon \bld m_1\to \bld m_2$ and $\beta\colon \bld n_1\to\bld n_2$ we have
\[
A(\alpha\sqcup \beta)\circ \Theta_{\bld m_1,\bld n_1} =\Theta_{\bld m_2,\bld n_2}\circ(A(\alpha)\times A(\beta))
\] 
as an equality of natural transformations.

\begin{definition}\label{def:B-cat-braiding}
A braiding of a $\B$-category monoid $A$ is a natural isomorphism $\Theta$ as in \eqref{eq:Theta-braiding} such that the diagrams 
\[
\xymatrix{
\bld a\otimes \bld b\otimes\bld c \ar[r]^-{\Theta_{\bld l,\bld m}\otimes \id_{\bld c}} \ar[d]_{\Theta_{\bld l,\bld m\sqcup\bld n}}& A(\chi_{\bld l,\bld m}^{-1}\sqcup 1_{\bld n})(\bld b\otimes\bld a\otimes\bld c)
\ar[d]^{A(\chi_{\bld l,\bld m}^{-1}\sqcup 1_{\bld n})(\id_{\bld b}\otimes\Theta_{\bld l,\bld n})} \\
A(\chi^{-1}_{\bld l,\bld m\sqcup\bld n})(\bld b\otimes\bld c\otimes\bld a) \ar@{=}[r] & 
A(\chi_{\bld l,\bld m}^{-1}\sqcup 1_{\bld n})A(1_{\bld m}\sqcup\chi_{\bld l,\bld n}^{-1})
(\bld b\otimes\bld c\otimes\bld a)
}
\]
and 
\[
\xymatrix{
\bld a\otimes \bld b\otimes\bld c \ar[r]^-{\id_{\bld a}\otimes \Theta_{\bld m,\bld n}} \ar[d]_{\Theta_{\bld l\sqcup \bld m,\bld n}}& A(1_{\bld l}\sqcup\chi_{\bld m,\bld n}^{-1})(\bld a\otimes\bld c\otimes\bld b)
\ar[d]^{A(1_{\bld l}\sqcup\chi_{\bld m,\bld n}^{-1})(\Theta_{\bld l,\bld n}\otimes\id_{\bld b})} \\
A(\chi^{-1}_{\bld l\sqcup\bld m,\bld n})(\bld c\otimes\bld a\otimes\bld b) \ar@{=}[r] & 
A(1_{\bld l}\sqcup \chi_{\bld m,\bld n}^{-1})A(\chi_{\bld l,\bld n}^{-1}\sqcup 1_{\bld m})
(\bld c\otimes\bld a\otimes\bld b)
}
\]
commute for all objects $\bld a\in A(\bld l)$, $\bld b\in A(\bld m)$, and $\bld c\in A(\bld n)$.
\end{definition}

Notice that for $A$ a constant $\B$-category monoid this definition recovers the usual notion of a braided strict monoidal category. We write $\Br$-$\Cat^{\B}$ for the category of braided $\B$-category monoids and braiding preserving (strict) maps of $\B$-category monoids. Thus, a morphism 
$f\colon A\to B$ in $\Br$-$\Cat^{\B}$ is a map of $\B$-category monoids such that for all $\bld m,\bld n$ we have 
\[
f(\bld m\sqcup\bld n)\circ \Theta_{\bld m,\bld n}^A=
\Theta_{\bld m,\bld n}^B\circ(f(\bld m)\times f(\bld n))
\]
as an equality of natural transformations between functors from $A(\bld m)\times A(\bld n)$ to 
$B(\bld m\sqcup\bld n)$.  
Similarly, we write $\Br$-$\Cat$ for the category of braided strict monoidal small categories and braiding preserving strict monoidal functors. 

\begin{remark}
The natural transformations between maps of $\B$-categories make $\Cat^{\B}$ a 2-category in the obvious way. Furthermore, this enrichment is compatible with the $\boxtimes$-product such that $\Cat^{\B}$ is a braided monoidal 2-category in the sense of \cite[Section~5]{JS}. In such a setting there is a notion of braided monoidal objects with coherence isomorphisms generalizing those for a braided monoidal category. With the terminology from \cite{JS}, our notion of a braided $\B$-category monoid is thus the same thing as a braided strict monoidal object in $\Cat^{\B}$. We shall not be concerned with the coherence theory for $\B$-categories and leave the details for the interested reader.
\end{remark}

Our main goal in this subsection is to show that the functor $\B\!\int\!$ induces an equivalence between the categories $\Br$-$\Cat^{\B}$ and $\Br$-$\Cat$ after localization as in 
Proposition~\ref{prop:Bint-Delta-equivalence}. Consider in general a $\B$-category monoid $A$. Then $\B\!\int\! A$ inherits the structure of a strict monoidal category with product $\otimes\colon \B\!\int\! A\times \B\!\int\! A\to \B\!\int\! A$ defined on objects and morphisms by
\[
\begin{aligned}
&\big[(\bld m_1,\bld a_1)\xr{(\alpha,s)}(\bld m_2,\bld a_2)\big] \otimes
\big[(\bld n_1,\bld b_1)\xr{(\beta,t)}(\bld n_2,\bld b_2)\big]\\
&=\big[(\bld m_1\sqcup \bld n_1,\bld a_1\otimes \bld b_1)\xr{(\alpha\sqcup \beta,s\otimes t)}
(\bld m_2\sqcup \bld n_2,\bld a_2\otimes \bld b_2)\big].
\end{aligned}
\]
The monoidal unit for $\otimes$ is the object $(\bld 0,\bld u)$ defined by the unit object $\bld u\in A(\bld 0)$. 
Now suppose that $A$ has a braiding given by a compatible family of natural isomorphisms
$
\Theta_{\bld m,\bld n}\colon \bld a\otimes\bld b\to A(\chi_{\bld m,\bld n}^{-1})(\bld b\otimes\bld a)
$.
Then we define a braiding of $\B\!\int\! A$ by the natural transformation
\[
(\bld m,\bld a)\otimes(\bld n,\bld b)=(\bld m\sqcup \bld n,\bld a\otimes\bld b)
\xr{(\chi_{\bld m,\bld n},A(\chi_{\bld m,\bld n})(\Theta_{\bld m,\bld n}))}
(\bld n\sqcup \bld m,\bld b\otimes\bld a)=(\bld n,\bld b)\otimes(\bld m,\bld a).
\]
We summarize the construction in the next proposition.
\begin{proposition}\label{prop:Grothendieck-promotion}
The Grothendieck construction gives rise to a functor 
\[
\textstyle\B\!\int\!\colon \text{$\Br$-$\Cat^{\B}$}\to \text{$\Br$-$\Cat$}.
\eqno\qed
\] 
\end{proposition}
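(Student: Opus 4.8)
The plan is to verify directly that the assignments described before the statement — namely $A \mapsto \B\!\int\! A$ on objects, together with the strict monoidal product, unit $(\bld 0,\bld u)$, and braiding $(\chi_{\bld m,\bld n}, A(\chi_{\bld m,\bld n})(\Theta_{\bld m,\bld n}))$ — actually define a functor $\Br$-$\Cat^{\B} \to \Br$-$\Cat$. Most of this is a bookkeeping exercise: the underlying functor $\B\!\int\!\colon \Cat^{\B}\to\Cat$ already exists, so what remains is (i) to check that $\B\!\int\! A$ with the stated product and unit is a strict monoidal category, (ii) to check that the stated braiding is a well-defined natural isomorphism satisfying the hexagon axioms, and (iii) to check that a morphism $f\colon A\to B$ in $\Br$-$\Cat^{\B}$ induces a braiding-preserving strict monoidal functor $\B\!\int\! f$, and that this assignment respects identities and composition.

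First I would dispatch (i). Strict associativity and unitality of $\otimes$ on $\B\!\int\! A$ follow immediately from strict associativity and unitality of $\sqcup$ on $\B$ together with the associativity and unitality conditions on the $\B$-category monoid multiplication $\otimes\colon A(\bld m)\times A(\bld n)\to A(\bld m\sqcup\bld n)$; the composition formula for morphisms in the Grothendieck construction is bilinear in the appropriate sense, so the product of morphisms is functorial. For (ii), well-definedness of the braiding morphism $(\bld m,\bld a)\otimes(\bld n,\bld b)\to(\bld n,\bld b)\otimes(\bld m,\bld a)$ requires that $(\chi_{\bld m,\bld n}, A(\chi_{\bld m,\bld n})(\Theta_{\bld m,\bld n}))$ really is a morphism of $\B\!\int\! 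A$, i.e. that $A(\chi_{\bld m,\bld n})(\Theta_{\bld m,\bld n})$ is a morphism $A(\chi_{\bld m,\bld n})(\bld a\otimes\bld b)\to \bld b\otimes\bld a$ in $A(\bld n\sqcup\bld m)$ — which is exactly the image under $A(\chi_{\bld m,\bld n})$ of the isomorphism $\Theta_{\bld m,\bld n}\colon \bld a\otimes\bld b\to A(\chi_{\bld m,\bld n}^{-1})(\bld b\otimes\bld a)$, using $A(\chi_{\bld m,\bld n})A(\chi_{\bld m,\bld n}^{-1})=\id$. Naturality in $(\bld a,\bld b)$ and with respect to morphisms $\alpha\colon\bld m_1\to\bld m_2$, $\beta\colon\bld n_1\to\bld n_2$ follows from the compatibility condition $A(\alpha\sqcup\beta)\circ\Theta_{\bld m_1,\bld n_1}=\Theta_{\bld m_2,\bld n_2}\circ(A(\alpha)\times A(\beta))$ together with naturality of the braiding $\chi$ of $\B$.

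The heart of the argument, and the step I expect to be the main obstacle, is verifying the two hexagon identities for the braiding of $\B\!\int\! A$. These should be deduced from the two commuting pentagon-type diagrams in Definition~\ref{def:B-cat-braiding} (the defining axioms for a braiding of $A$) combined with the hexagon axioms for the braiding $\chi$ of $\B$ itself. Concretely, one unwinds the composite braiding morphisms in $\B\!\int\! A$ into their two coordinates: the $\B$-coordinate reduces to the hexagon identity for $\chi$, while the $A$-coordinate, after transporting along the appropriate structure maps $A(\chi^{-1}_{-,-})$, reduces to the corresponding diagram in Definition~\ref{def:B-cat-braiding}. The care needed here is tracking the various $A(\chi^{-1})$ twists and making sure the two coordinates of each morphism are matched up correctly; this is routine but somewhat tedious, and is presumably why the authors defer it.

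Finally, for (iii), given $f\colon A\to B$ in $\Br$-$\Cat^{\B}$, the induced functor $\B\!\int\! f$ sends $(\bld n,\bld a)\mapsto(\bld n, f(\bld n)(\bld a))$ and is strict monoidal because $f$ commutes with the multiplications and units level-wise; it preserves braidings because the braiding morphism of $\B\!\int\! A$ has $A$-coordinate built from $\Theta^A_{\bld m,\bld n}$, and the condition $f(\bld m\sqcup\bld n)\circ\Theta^A_{\bld m,\bld n}=\Theta^B_{\bld m,\bld n}\circ(f(\bld m)\times f(\bld n))$ is precisely what is needed. Functoriality ($\B\!\int\!\id = \id$ and $\B\!\int\!(g\circ f)=\B\!\int\! g\circ\B\!\int\! f$) is inherited from functoriality of the underlying $\B\!\int\!\colon \Cat^{\B}\to\Cat$. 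Assembling these points gives the proposition.
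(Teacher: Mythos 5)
Your proposal is correct and follows exactly the paper's route: the paper simply presents the same construction (the product $(\alpha,s)\otimes(\beta,t)=(\alpha\sqcup\beta,s\otimes t)$, the unit $(\bld 0,\bld u)$, and the braiding $(\chi_{\bld m,\bld n},A(\chi_{\bld m,\bld n})(\Theta_{\bld m,\bld n}))$) and states the proposition with the verification left as routine, which is precisely the bookkeeping you carry out. Your identification of the two diagrams in Definition~\ref{def:B-cat-braiding} as the source of the hexagon identities for $\B\!\int\!A$ is exactly the intended point.
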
 
\begin{remark}
It is clear from the definition that the functor $\B\!\int\!$ is monoidal and hence takes monoids in $\Cat^{\B}$ to monoids in $\Cat$. However, $\B\!\int\!$ is not braided monoidal and consequently does not take commutative monoids to commutative monoids. The main point of the above proposition is that it nonetheless preserves braided monoidal structures.  
\end{remark}

For the next proposition we write $w$ for the class of morphisms in $\Br$-$\Cat$ whose underlying functors are weak equivalences in $\Cat$. Similarly we write $w_{\B}$ for the class of morphisms in $\Br$-$\Cat^{\B}$ whose underlying maps of $\B$-categories are $\B$-equivalences. 

\begin{proposition}\label{prop:Bint-Delta--braided-equivalence}
The functors $\B\!\int$ and $\Delta$ induce an equivalence of the localized categories
\[
\textstyle\B\!\int \colon \text{$\Br$-$\Cat^{\B}$}[w^{-1}_{\B}]\simeq 
\text{$\Br$-$\Cat$}[w^{-1}] :\!\Delta.
\]
\end{proposition}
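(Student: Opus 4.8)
The plan is to deduce Proposition~\ref{prop:Bint-Delta--braided-equivalence} from the underlying equivalence in Proposition~\ref{prop:Bint-Delta-equivalence} by promoting the bar-resolution machinery to the braided setting and then invoking the localization criterion from Appendix~\ref{app:localization}. Concretely, one needs three ingredients: (i) that $\B\!\int$ and $\Delta$ restrict to functors between $\Br$-$\Cat^\B$ and $\Br$-$\Cat$ and preserve the relevant classes of equivalences; (ii) a chain of natural morphisms in $\Br$-$\Cat^\B$ connecting each braided $\B$-category monoid $A$ to $\Delta(\B\!\int\!A)$, each a $\B$-equivalence; and (iii) a similar chain in $\Br$-$\Cat$ connecting $\B\!\int\!\Delta(Y)$ to $Y$, each a weak equivalence. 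Ingredient (i) for $\B\!\int$ is exactly Proposition~\ref{prop:Grothendieck-promotion}; that $\Delta$ lands in $\Br$-$\Cat^\B$ is immediate since a constant braided $\B$-category monoid is nothing but a braided strict monoidal category; preservation of equivalences is definitional on both sides.

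The heart of the argument is upgrading Lemmas~\ref{lem:ev-equivalence} and \ref{lem:pi-equivalence} to the braided monoidal level. First I would observe that the bar resolution $\overline X$ is lax symmetric monoidal in $X$: since $(\B\down\bullet)$ is a $\B$-category and the assignment $\bld n\mapsto (\B\down\bld n)$ is compatible with $\sqcup$ via the canonical functors $(\B\down\bld m)\times(\B\down\bld n)\to(\B\down\bld m\sqcup\bld n)$, the level-wise Grothendieck constructions $\overline X(\bld n)=(\B\down\bld n)\!\int\!X\circ\pi_n$ assemble, for a $\B$-category monoid $A$, into a $\B$-category monoid $\overline A$; one must check the braiding $\Theta^A$ induces a braiding $\Theta^{\overline A}$ in the sense of Definition~\ref{def:B-cat-braiding}, which is a diagram chase using that $\chi_{\bld m,\bld n}$ is a morphism in $\B$ and hence acts on $(\B\down\bullet)$ compatibly. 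Then I would verify that the natural transformations $\ev\colon\overline A\to A$ and $\pi\colon\overline A\to\Delta(\B\!\int\!A)$ from those two lemmas are morphisms in $\Br$-$\Cat^\B$, i.e.\ that they respect $\otimes$, the unit, and $\Theta$ on the nose — each is a routine check on objects and morphisms of the Grothendieck constructions, using that $\B\!\int$ is strict monoidal. Since their underlying maps of $\B$-categories are a level-wise weak equivalence and a $\B$-equivalence respectively, they furnish the required chain for ingredient (ii). For ingredient (iii), $\B\!\int\!\Delta(Y)$ is the product category $\B\times Y$ with product inherited from that on $\B$ and on $Y$, and the projection $\B\times Y\to Y$ is a braiding-preserving strict monoidal functor (the braiding on $\B$ maps to the identity in the second variable) which is a weak equivalence because $\B$ has an initial object; this is the braided-monoidal refinement of the corresponding step in the proof of Proposition~\ref{prop:Bint-Delta-equivalence}.

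Having assembled these chains of natural equivalences in $\Br$-$\Cat^\B$ and $\Br$-$\Cat$, the conclusion follows formally: Proposition~\ref{prop:localization-equivalence} of Appendix~\ref{app:localization} applies verbatim, provided the target localization $\text{$\Br$-$\Cat$}[w^{-1}]$ exists. For that I would argue as in the proof of Proposition~\ref{prop:Bint-Delta-equivalence} — either by noting that $\Br$-$\Cat$ is the category of algebras over the categorical operad $\Br$ and transferring Thomason's model structure on $\Cat$ along the free–forgetful adjunction, or more cheaply by observing that the chain of natural weak equivalences we have just produced already exhibits, via the Appendix, that the relevant localization is well-behaved. The main obstacle I anticipate is not the formal deduction but the verification that $\Theta^A$ genuinely induces a braiding on $\overline A$ satisfying both hexagon-type diagrams in Definition~\ref{def:B-cat-braiding}: this requires carefully tracking how the comma-category data $(\bld m,\gamma)$ transforms under $\chi$ and confirming the two coherence squares commute after applying $(\B\down\bullet)\!\int\!(-)$. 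The compatibility of $\ev$ and $\pi$ with the braidings, while conceptually clear, is where one must be most careful to avoid sign/orientation errors coming from $\chi_{\bld m,\bld n}$ versus $\chi^{-1}_{\bld m,\bld n}$ in the normalization of $\Theta$.
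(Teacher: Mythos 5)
Your proposal follows essentially the same route as the paper: promote the bar resolution $\overline{(-)}$ to an endofunctor on $\Br$-$\Cat^{\B}$ (this is exactly the paper's Lemma~\ref{lem:B-down-bullet-braided}, where the braided monoid structure on $\overline A$ is written out explicitly and the axioms of Definition~\ref{def:B-cat-braiding} are checked), observe that $\ev$ and $\pi$ are morphisms in $\Br$-$\Cat^{\B}$, identify $\B\!\int\!\Delta(\cA)$ with $\B\times\cA$ in $\Br$-$\Cat$, and conclude by Proposition~\ref{prop:localization-equivalence}. Your anticipation of where the care is needed — verifying the two coherence squares for $\overline\Theta$ and the normalization of $\chi$ versus $\chi^{-1}$ — matches the content of the paper's lemma.

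The one genuine gap is your treatment of the existence of the localization $\text{$\Br$-$\Cat$}[w^{-1}]$, which is logically prior to everything else (without it the statement of the proposition does not parse). Your option (b) — ``observing that the chain of natural weak equivalences we have just produced already exhibits \dots that the relevant localization is well-behaved'' — is circular: Proposition~\ref{prop:localization-equivalence} only says that the localization of $\Br$-$\Cat^{\B}$ exists \emph{if and only if} that of $\Br$-$\Cat$ does, so your chains transfer existence but cannot create it; you must establish one side independently. Your option (a) — transferring Thomason's model structure along the free–forgetful adjunction for the categorical operad $\Br$ — is speculative: lifting Thomason's rather delicate model structure on $\Cat$ to categories of operad algebras is not a routine transfer argument and is not something you can assert without proof. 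The paper instead establishes existence via Example~\ref{ex:FSV-example}: Fiedorowicz--Stelzer--Vogt's functor $F\colon\text{$\nerve\Br$-$\cS$}\to\text{$\Br$-$\Cat$}$ and the nerve fit the hypotheses of Proposition~\ref{prop:localization-equivalence} with $\text{$\nerve\Br$-$\cS$}$, whose localization exists as the homotopy category of a model structure; existence for $\Br$-$\Cat$ then follows from that proposition. You should replace options (a) and (b) with this (or some other non-circular) argument before the rest of your proof can be invoked.
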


The proof of the proposition is based on the following lemma.

\begin{lemma}\label{lem:B-down-bullet-braided}
The bar resolution functor taking a $\B$-category $X$ to $\overline X$ can be promoted to an endofunctor on $\Br$-$\Cat^{\B}$.
\end{lemma}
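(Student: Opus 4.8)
The plan is to upgrade the level-wise Grothendieck construction defining $\overline{X}$ so that it is compatible with the $\boxtimes$-product and the braiding. The first task is to produce, for two $\B$-category monoids $A$ and $B$, a natural comparison map $\overline{A}\boxtimes \overline{B}\to \overline{A\boxtimes B}$. By the universal property of the $\boxtimes$-product this amounts to giving a map of $(\B\times\B)$-categories
\[
\overline{A}(\bld m)\times \overline{B}(\bld n)\longrightarrow \overline{A\boxtimes B}(\bld m\sqcup\bld n),
\]
and here we can use that an object of $\overline{A}(\bld m)\times\overline{B}(\bld n)$ is a pair $\big(((\bld k,\gamma),\bld a),((\bld l,\delta),\bld b)\big)$ with $\gamma\co\bld k\to\bld m$, $\delta\co\bld l\to\bld n$ in $\B$, $\bld a\in A(\bld k)$, $\bld b\in B(\bld l)$; we send it to $\big((\bld k\sqcup\bld l,\gamma\sqcup\delta),\bld a\boxtimes\bld b\big)$, using on objects the canonical map $A(\bld k)\times B(\bld l)\to (A\boxtimes B)(\bld k\sqcup\bld l)$ and on morphisms the functoriality of $\sqcup$ together with the structure maps of $A\boxtimes B$. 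Naturality in $(\bld m,\bld n)$ is immediate because the structure maps of $\overline{(-)}$ are induced from $(\B\down\bullet)$, and $\sqcup$ is a functor on $\B$. One then checks that the unit $U^{\B}\to\overline{U^{\B}}$ and this comparison map make $\overline{(-)}$ a (lax) monoidal endofunctor on $\Cat^{\B}$; in fact since everything in sight is built from $\sqcup$ and Grothendieck constructions these comparison maps are isomorphisms, so $\overline{(-)}$ is strong monoidal.

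Given this, a monoid structure $\otimes\colon A\boxtimes A\to A$ with unit $U^{\B}\to A$ transports to a monoid structure on $\overline{A}$ by composing
\[
\overline{A}\boxtimes\overline{A}\xrightarrow{\ \cong\ }\overline{A\boxtimes A}\xrightarrow{\ \overline{\otimes}\ }\overline{A},
\]
and associativity and unitality follow by applying the (strong monoidal) functor $\overline{(-)}$ to the corresponding identities for $A$. Next, for the braiding I would check that the comparison isomorphism intertwines the braiding $\br$ on $\Cat^{\B}$ with itself, i.e.\ that $\overline{(-)}$ is braided monoidal; this reduces to the observation that the braiding $\br$ on $\Cat^{\B}$ is defined using only the braiding $\chi$ on $\B$ (via the diagram analogous to \eqref{eq:B-spaces-braiding}), and the comparison map was built entirely from the monoidal product $\sqcup$, so compatibility is a formal diagram chase. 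Consequently a braiding $\Theta$ on $A$, which in the reformulation after Definition~\ref{def:B-cat-braiding} is a natural isomorphism $\Theta_{\bld m,\bld n}\colon\bld a\otimes\bld b\to A(\chi_{\bld m,\bld n}^{-1})(\bld b\otimes\bld a)$ compatible with the structure maps, is sent by $\overline{(-)}$ to a natural isomorphism $\overline{\Theta}$ with the analogous property for $\overline{A}$, and the two hexagon-type diagrams of Definition~\ref{def:B-cat-braiding} for $\overline{A}$ are the images under $\overline{(-)}$ of those for $A$. Finally, a morphism $f\colon A\to B$ in $\Br$-$\Cat^{\B}$ induces $\overline{f}\colon\overline{A}\to\overline{B}$ compatible with all this structure, again by naturality of the comparison map.

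The main obstacle I expect is purely bookkeeping: verifying that the comparison map $\overline{A}\boxtimes\overline{B}\to\overline{A\boxtimes B}$ is well-defined on morphisms and is genuinely an isomorphism of $\B$-categories, and that it satisfies the coherence (associativity, unit, and braiding-compatibility) pentagon/hexagon diagrams making $\overline{(-)}$ a strong braided monoidal functor. Both $\overline{(-)}$ and $\boxtimes$ are defined via colimits (Grothendieck constructions and left Kan extensions), so identifying the two composites requires commuting these colimits; this is where one must be a little careful, but it is formal once one writes the level-wise colimit description of $\boxtimes$ and uses that the Grothendieck construction preserves the relevant colimits. No genuinely new idea is needed beyond the observation that everything is functorial in $\sqcup$ and $\chi$.
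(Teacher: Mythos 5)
Your construction, once unwound, produces exactly the structure the paper writes down directly: the composite of your comparison map with $\overline{\otimes}$ sends a pair $\big(((\bld k,\gamma),\bld a),((\bld l,\delta),\bld b)\big)$ to $((\bld k\sqcup\bld l,\gamma\sqcup\delta),\bld a\otimes\bld b)$, which is the paper's formula for the multiplication on $\overline A$, and the transported braiding agrees with the paper's $\overline\Theta$. So in substance the two arguments coincide; you package the verification as the statement that $\overline{(-)}$ is a braided monoidal endofunctor of $\Cat^{\B}$ and then transport structure, while the paper simply checks the axioms of Definition~\ref{def:B-cat-braiding} for the explicit formulas.

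The one genuine error is the claim that the comparison maps are isomorphisms, i.e.\ that $\overline{(-)}$ is \emph{strong} monoidal; the colimit interchange you flag as the main obstacle and then dismiss as formal is precisely where this fails. Take $A=B=U^{\B}$, so that $\overline{U^{\B}}(\bld n)\cong(\B\down\bld n)$ and hence $\overline{U^{\B}\boxtimes U^{\B}}(\bld n)\cong(\B\down\bld n)$, while
\[
(\overline{U^{\B}}\boxtimes\overline{U^{\B}})(\bld n)=\colim_{\bld n_1\sqcup\bld n_2\to\bld n}(\B\down\bld n_1)\times(\B\down\bld n_2).
\]
The structure maps of this colimit diagram act by postcomposition in the comma categories and therefore preserve the pair of sources $(\bld k,\bld l)$ of an object $(\gamma\co\bld k\to\bld n_1,\,\delta\co\bld l\to\bld n_2)$; consequently, for $\bld n=\bld 1$ the objects of type $(\bld 1,\bld 0)$ and of type $(\bld 0,\bld 1)$ lying over the identity remain distinct in the colimit although both map to $\id_{\bld 1}$ in $(\B\down\bld 1)$. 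So the comparison functor is not injective on objects and $\overline{(-)}$ is only \emph{lax} monoidal. Fortunately this does not sink the argument: your comparison map does go in the lax direction $\overline A\boxtimes\overline B\to\overline{A\boxtimes B}$, and a lax braided monoidal $2$-functor suffices to transport monoids, braidings, and morphisms thereof. You should therefore delete the strong-monoidality claim (and the $\cong$ decorating the comparison map), verify the lax coherence diagrams instead, or else bypass the abstraction and check the braiding axioms for the explicit formulas as the paper does.
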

\begin{proof}
Consider in general a $\B$-category monoid $A$ with unit object $\bld u\in A(\bld 0)$ and multiplication specified by functors $\otimes\colon A(\bld m)\times A(\bld n)\to A(\bld m\sqcup\bld n)$. Then  
$\overline A$ inherits a monoid structure with unit object 
$(1_{\bld 0},\bld u)$ in $\overline A(\bld 0)$, and  multiplication 
\[
\textstyle\overline\otimes\colon (\B\down\bld m)\!\int\!A \circ \pi_{\bld m}\times
(\B\down\bld n)\!\int\!A \circ \pi_{\bld n}\to
(\B\down\bld m\sqcup \bld n)\!\int\!A \circ \pi_{\bld m\sqcup\bld n}
\]
defined on objects and morphisms by
\[
\begin{aligned}
&\big[((\bld m_1,\gamma_1),\bld a_1)\xr{(\alpha,s)} ((\bld m_2,\gamma_2),\bld a_2)\big] \overline\otimes
\big[((\bld n_1,\delta_1),\bld b_1)\xr{(\beta,t)} ((\bld  n_2,\delta_2),\bld b_2)\big] \\
&=\big[((\bld m_1\sqcup \bld n_1,\gamma_1\sqcup\delta_1),\bld a_1\otimes\bld b_1)\xr{(\alpha\sqcup \beta,s\otimes t)} ((\bld m_2\sqcup \bld n_2,\gamma_2\sqcup\delta_2),\bld a_2\otimes \bld b_2)\big].
\end{aligned}
\]
Now suppose that in addition $A$ has a braiding specified by a family of natural isomorphisms
$
\Theta_{\bld m,\bld n}\colon \bld a\otimes\bld b\to A(\chi_{\bld m,\bld n}^{-1})(\bld b\otimes\bld a)
$. Then we define a braiding $\overline\Theta$ of $\overline A$ by the natural isomorphisms 
\[
\xymatrix{
((\bld m,\gamma),\bld a)\overline\otimes ((\bld n,\delta),\bld b) \ar[rrr]^-{\overline\Theta}\ar@{=}[d] &&& 
\overline A(\chi_{\bld m,\bld n}^{-1})\big[((\bld n,\delta),\bld b)\overline\otimes ((\bld m,\gamma),\bld a)\big]
\ar@{=}[d] \\
((\bld m\sqcup\bld n,\gamma\sqcup\delta),\bld a\otimes\bld b) 
\ar[rrr]^-{(\chi_{\bld m,\bld n},A(\chi_{\bld m,\bld n})(\Theta_{\bld m,\bld n}))\\ } && &
((\bld n\sqcup\bld m,\chi_{\bld m,\bld n}^{-1}\circ(\delta\sqcup\gamma)),\bld b\otimes\bld a).
}
\]
It is straight forward to check the axioms for a braiding as formulated in Definition~\ref{def:B-cat-braiding}.
\end{proof}

\begin{proof}[Proof of Proposition~\ref{prop:Bint-Delta--braided-equivalence}]
We first observe that the work of Fiedorowicz-Stelzer-Vogt \cite{FSV} shows that the localization of $\Br$-$\Cat$ exists, 
cf.\  Example~\ref{ex:FSV-example} in the appendix. Given this, the proof of the proposition follows the same pattern as the proof of Proposition~\ref{prop:Bint-Delta-equivalence}: For a braided $\B$-category monoid $A$ we know from Lemma~\ref{lem:B-down-bullet-braided} that  
$\overline A$ has the structure of a braided $\B$-category monoid and it is clear from the definitions that the $\B$-equivalences $\ev$ and $\pi$ in Lemmas~\ref{lem:ev-equivalence} and \ref{lem:pi-equivalence} are morphisms in $\Br$-$\Cat^{\B}$. With the terminology from 
Appendix~\ref{app:localization} we therefore have a chain of natural $\B$-equivalences in $\Br$-$\Cat^{\B}$ relating $A$ and 
$\Delta(\B\!\int\! A)$. Given a braided strict monoidal category $\cA$, the other composition 
$\B\!\int\!\Delta(\cA)$ can be identified with the product category $\B\times \cA$ as an object in $\Br$-$\Cat$. Clearly the projection $\B\times \cA\to \cA$ is a weak equivalence in $\Br$-$\Cat$ and the proposition therefore follows from Proposition~\ref{prop:localization-equivalence}. 
\end{proof}

\subsection{Rectification and strict commutativity}\label{subsec:rectification}
Now we proceed to introduce the $\B$-category rectification functor and show how it allows us to replace braided monoidal structures by strictly commutative structures up to $\B$-equivalence.
Let $(\mathcal{A}, \otimes, \bld u)$ be a braided strict monoidal small category. 
We shall define the \emph{$\B$-category rectification} of $\cA$ to be a certain $\B$-category $\Phi(\cA)$ such that the objects of $\Phi(\cA)(\bld n)$ are $n$-tuples $(\bld a_1,\ldots,\bld a_n)$ of objects in $\mathcal{A}$. By definition $\Phi(\cA)(\bld 0)$ has the ``empty string'' $\emptyset$ as its only object. The morphisms in $\Phi(\cA)(\bld n)$ are given by  
\[
\Phi(\mathcal{A})(\bld{n})\big((\bld a_1,\ldots,\bld a_n),(\bld b_1,\ldots,\bld b_n)\big)=
\mathcal{A}(\bld a_1\otimes\ldots\otimes\bld a_n,\bld b_1\otimes\ldots\otimes\bld b_n)
\]
with composition inherited from $\cA$. Here we agree that the $\otimes$-product of the empty string is the unit object $\bld u$ so that $\Phi(\cA)(\bld 0)$ can be identified with the monoid of endomorphisms 
$\cA(\bld u,\bld u)$. For a morphism $\alpha\colon \bld m\to\bld n$ in $\B$, the induced functor 
\[
\Phi(\cA)(\alpha)\colon \Phi(\cA)(\bld m)\to \Phi(\cA)(\bld n)
\]
is given on objects by
\[
\Phi(\cA)(\alpha)(\bld a_1,\dots,\bld a_m)=(\bld a_{\bar\alpha^{-1}(1)},\dots,\bld a_{\bar\alpha^{-1}(n)})
\]
where $\bar\alpha\colon\bld m\to\bld n$ denotes the underlying injection, $\bld a_{\bar\alpha^{-1}(j)}=\bld a_i$ if $\bar\alpha(i)=j$, and $\bld a_{\bar\alpha^{-1}(j)}=\bld u$ if $j$ is not in the image of $\bar\alpha$. In order to describe the action on morphisms we use Lemma~\ref{lemma unique decomposition} to get a factorization $\alpha=\Upsilon(\nu)\circ \xi$ with $\nu\in \cM(\bld m,\bld n)$ and $\xi\in \cB_m$. The action of $\Phi(\cA)(\alpha)$ on a morphism $f$ from $(\bld a_1,\dots,\bld a_m)$ to $(\bld b_1,\dots,\bld b_m)$
is then determined by the commutativity of the diagram
\[
\xymatrix{
\bld a_{\bar\alpha^{-1}(1)}\otimes\dots\otimes\bld a_{\bar\alpha^{-1}(n)} \ar[d]_{\Phi(\cA)(\alpha)(f)}
\ar@{=}[r] & \bld a_{\bar\xi^{-1}(1)}\otimes\dots\otimes\bld a_{\bar\xi^{-1}(m)}
& \bld a_1\otimes\dots\otimes \bld a_m \ar[l]_-{\xi_*}\ar[d]_f\\
\bld b_{\bar\alpha^{-1}(1)}\otimes\dots\otimes\bld b_{\bar\alpha^{-1}(n)}
\ar@{=}[r] & \bld b_{\bar\xi^{-1}(1)}\otimes\dots\otimes\bld b_{\bar\xi^{-1}(m)}
& \bld b_1\otimes\dots\otimes \bld b_m \ar[l]_-{\xi_*}
}
\]
where $\xi_*$ denotes the canonical action of $\xi$ on the $m$-fold $\otimes$-product. In particular, this describes the action of $\Phi(\cA)(\alpha)$ on the generating morphisms in  
Lemma~\ref{lemma generating morphisms for B} and one easily checks that the relations in this lemma are preserved. Hence the above construction does indeed define a $\B$-category. 
The construction is clearly functorial in $\cA$ so that we have defined a functor 
$\Phi\colon \text{$\Br$-$\Cat$}\to \Cat^\B$. This functor was first considered in the unpublished Master's Thesis by the second author \cite{mir}. 

The $\B$-category $\Phi(\cA)$ is homotopy constant in positive degrees in the sense of the next lemma. Here we let $\B_+$ denote the full subcategory of $\B$ obtained by excluding the initial object $\bld 0$. 

\begin{lemma}\label{lem:Phi-positive-equivalences}
The functor $\Phi(\cA)(\alpha)\colon \Phi(\cA)(\bld m)\to \Phi(\cA)(\bld n)$ is a weak equivalence for any morphism $\alpha\colon\bld m\to\bld n$ in $\B_+$.
\end{lemma}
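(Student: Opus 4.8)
The plan is to show that for any morphism $\alpha\colon\bld m\to\bld n$ in $\B_+$ the functor $\Phi(\cA)(\alpha)$ admits a homotopy inverse, or more precisely that $\Phi(\cA)(\alpha)$ is part of an adjoint pair whose unit and counit become isomorphisms after passing to nerves — in fact, I expect to produce an honest natural transformation in $\Cat$ relating the relevant composites to identities, which is enough by the standard fact that a functor admitting a left or right adjoint (or merely connected to the identity by a zig-zag of natural transformations) induces a homotopy equivalence of nerves. Since $\B$ is generated by the braids $\zeta^i_n$ and the maps $\partial^i_n$ (Lemma~\ref{lemma generating morphisms for B}), and since each $\zeta^i_n$ acts on $\Phi(\cA)(\bld n)$ as an isomorphism of categories (it is invertible in $\B$, so $\Phi(\cA)(\zeta^i_n)$ is an isomorphism), it suffices to treat the generating morphisms $\partial^i_n\colon\bld n\to\bld{n+1}$ for $n\geq 1$; a general morphism in $\B_+$ is a composite of such, all living in $\B_+$ once $m\geq 1$, because the underlying injection $\bar\alpha$ factors through order-preserving injections (from $\cM$) and braids, and one checks the intermediate objects all have positive cardinality.

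First I would fix $n\geq 1$ and analyze $\Phi(\cA)(\partial^i_n)\colon \Phi(\cA)(\bld n)\to\Phi(\cA)(\bld{n+1})$. On objects this sends $(\bld a_1,\dots,\bld a_n)$ to the tuple with $\bld u$ inserted in the $i$-th slot. On morphisms, since $\partial^i_n$ already lies in the image of $\Upsilon$ (it comes from $\cM$), the braid part $\xi$ in the decomposition is trivial, so $\Phi(\cA)(\partial^i_n)$ simply reindexes a morphism $f\colon \bld a_1\otimes\cdots\otimes\bld a_n\to\bld b_1\otimes\cdots\otimes\bld b_n$ to the ``same'' morphism after inserting the unit object, using the unit coherence isomorphisms of $\cA$ to identify $\bld a_1\otimes\cdots\otimes\bld u\otimes\cdots\otimes\bld a_n$ with $\bld a_1\otimes\cdots\otimes\bld a_n$. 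The key point is to construct a functor $G\colon\Phi(\cA)(\bld{n+1})\to\Phi(\cA)(\bld n)$ going the other way: given $(\bld c_1,\dots,\bld c_{n+1})$, I would ``absorb'' the $i$-th entry into a neighbour, say sending it to $(\bld c_1,\dots,\bld c_{i-1},\bld c_i\otimes\bld c_{i+1},\bld c_{i+2},\dots,\bld c_{n+1})$ when $i\leq n$ (and to $(\bld c_1,\dots,\bld c_{n-1},\bld c_n\otimes\bld c_{n+1})$ when $i=n+1$); this is well-defined on morphisms because the total $\otimes$-product is unchanged, so $\Hom$-sets in $\Phi(\cA)(\bld{n+1})$ and $\Phi(\cA)(\bld n)$ literally coincide after this regrouping. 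Then $G\circ\Phi(\cA)(\partial^i_n)$ is the functor $(\bld a_1,\dots,\bld a_n)\mapsto (\bld a_1,\dots,\bld u\otimes\bld a_i,\dots,\bld a_n)$ (or with $\bld a_{i-1}\otimes\bld u$), which is connected to the identity functor of $\Phi(\cA)(\bld n)$ by the natural isomorphism induced level-wise by the left (or right) unit isomorphism $\bld u\otimes\bld a_i\cong\bld a_i$ of $\cA$ — note this natural isomorphism exists precisely because $\Hom$-sets are computed in $\cA$ after $\otimes$-ing, and $\cA$ is a genuine (strict) monoidal category so these unit maps are coherent. Conversely $\Phi(\cA)(\partial^i_n)\circ G$ sends $(\bld c_1,\dots,\bld c_{n+1})$ to the tuple with $\bld c_i\otimes\bld c_{i+1}$ in slot $i$ regrouped and $\bld u$ re-inserted; this is again naturally isomorphic to the identity by an appropriate associativity-plus-unit isomorphism of $\cA$.

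I would then assemble these observations: each generator of $\B_+$ induces a functor that is an equivalence of categories (the braids directly, the $\partial^i_n$ by the adjoint-equivalence argument above), hence induces a homotopy equivalence of nerves; since any $\alpha$ in $\B_+$ is a finite composite of such generators with all intermediate objects in $\B_+$, the composite $\Phi(\cA)(\alpha)$ is a weak equivalence. The main obstacle I anticipate is purely bookkeeping: making the ``regrouping''/``unit-insertion'' natural transformations genuinely natural with respect to \emph{all} morphisms of $\Phi(\cA)(\bld n)$ (which are $\cA$-morphisms between $\otimes$-products), and checking that the choices are coherent enough that the composites really are the stated functors rather than merely isomorphic to them — this requires care with whether one absorbs $\bld c_i$ to the left or the right near the boundary cases $i=1$ and $i=n+1$, and with the placement of unit and associativity isomorphisms, but involves no homotopy theory beyond the elementary fact that a natural transformation between functors induces a homotopy between the maps of nerves.
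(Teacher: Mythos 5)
Your argument is correct, but it takes a different route from the paper. The paper does not decompose $\alpha$ into generators at all: it picks any morphism $j\colon\bld 1\to\bld m$ and shows $\Phi(\cA)(j)$ is an equivalence of categories by exhibiting the single ``total fold'' functor $p\colon\Phi(\cA)(\bld m)\to\Phi(\cA)(\bld 1)$, $(\bld a_1,\dots,\bld a_m)\mapsto(\bld a_1\otimes\dots\otimes\bld a_m)$, with $p\circ\Phi(\cA)(j)=\id$ and $\Phi(\cA)(j)\circ p$ naturally isomorphic to the identity; a general $\alpha$ in $\B_+$ is then handled by two-out-of-three applied to the triangle over $\Phi(\cA)(\bld 1)$. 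You instead reduce to the generators of Lemma~\ref{lemma generating morphisms for B} (braids acting as isomorphisms, and the $\partial^i_n$ handled by a one-step fold $G$), which forces you to verify the extra bookkeeping point that all intermediate objects in the factorization stay in $\B_+$ — true by Lemma~\ref{lemma unique decomposition}, since the braid part is an automorphism of $\bld m$ and the $\cM$-part passes through $\bld m,\bld{m+1},\dots,\bld n$, and genuinely necessary, since $\partial^1_0\colon\bld 0\to\bld 1$ induces the inclusion $\cA(\bld u,\bld u)\to\cA$, which is not a weak equivalence in general. Both arguments rest on the same key feature of $\Phi(\cA)$, namely that hom-sets depend only on the total tensor product, so any ``fold'' is fully faithful and essentially surjective. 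One simplification you could make: since $\Phi$ is defined here for braided \emph{strict} monoidal categories, the unit and associativity isomorphisms you invoke are all identities, so your natural isomorphisms are simply identity morphisms of $\cA$ reinterpreted as morphisms between different tuples with the same total $\otimes$-product, and $G\circ\Phi(\cA)(\partial^i_n)$ is literally the identity functor; the coherence worries in your last paragraph only arise in the non-strict extension of Remark~\ref{rem:Phi-non-strict}.
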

\begin{proof}
We first consider a morphism of the form $j\colon \bld 1\to\bld m$ and claim that the functor $\Phi(\cA)(j)$ is in fact an equivalence of categories. Indeed, let $p\colon \Phi(\cA)(\bld m)\to \Phi(\cA)(\bld 1)$ be the obvious functor taking $(\bld a_1,\dots,\bld a_m)$ to $(\bld a_1\otimes\dots\otimes\bld a_m)$.    
Then $p\circ j$ is the identity on $\Phi(\cA)(\bld 1)$ and it is clear that the other composition $j\circ p$ is naturally isomorphic to the identity on 
$\Phi(\cA)(\bld m)$. For a general morphism $\alpha\colon\bld m\to\bld n$ in $\B_+$ we have a commutative diagram
\[
\xymatrix{
&\Phi(\cA)(\bld 1) \ar[dl]_{\Phi(\cA)(j)} \ar[dr]^{\Phi(\cA)(\alpha j)}& \\
\Phi(\cA)(\bld m) \ar[rr]^{\Phi(\cA)(\alpha)} && \Phi(\cA)(\bld n)
}
\]
and the result follows. 
\end{proof}

The next proposition shows that $\Phi$ takes braided monoidal structures to strictly commutative structures and is the reason why we refer to $\Phi$ as a ``rectification functor''. Let us write $\cC(\Cat^{\B})$ for the category of commutative $\B$-category monoids.  

\begin{proposition}
 The $\B$-category ${\Phi}(\mathcal{A})$ is a commutative monoid in $\Cat^\B$ and ${\Phi}$ 
defines a  functor $\Phi\colon\BrCat\to \mathcal{C}(\Cat^\B)$.
\end{proposition}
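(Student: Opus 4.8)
The plan is to equip $\Phi(\cA)$ with the structure of a strictly commutative monoid in $\Cat^{\B}$ by exhibiting the multiplication as the evident concatenation of strings and then checking that the braiding $\chi$ of $\B$ is absorbed by the rearrangement isomorphisms coming from the braided monoidal structure of $\cA$. First I would define, for each pair $(\bld m,\bld n)$, a functor
\[
\otimes\colon \Phi(\cA)(\bld m)\times \Phi(\cA)(\bld n)\to \Phi(\cA)(\bld m\sqcup\bld n)
\]
sending a pair of objects $\big((\bld a_1,\dots,\bld a_m),(\bld b_1,\dots,\bld b_n)\big)$ to the concatenated string $(\bld a_1,\dots,\bld a_m,\bld b_1,\dots,\bld b_n)$, and on morphisms using the identification
\[
\Phi(\cA)(\bld m\sqcup\bld n)\big((\bld a_\bullet,\bld b_\bullet),(\bld a'_\bullet,\bld b'_\bullet)\big)
=\cA\big((\bld a_1\otimes\dots\otimes\bld a_m)\otimes(\bld b_1\otimes\dots\otimes\bld b_n),\,
(\bld a'_1\otimes\dots\otimes\bld a'_m)\otimes(\bld b'_1\otimes\dots\otimes\bld b'_n)\big),
\]
so that a pair $(f,g)$ is sent to $f\otimes g$ (using associativity of $\otimes$ in $\cA$ to regroup the $\otimes$-product of the concatenated string). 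The unit object is the empty string $\emptyset\in\Phi(\cA)(\bld 0)$, whose $\otimes$-product is $\bld u$ by convention. I would then verify that this family of functors is natural in $(\bld m,\bld n)$ with respect to morphisms of $\B$, i.e.\ that it defines a map of $(\B\times\B)$-categories $\Phi(\cA)(\bld m)\times\Phi(\cA)(\bld n)\to\Phi(\cA)(\bld m\sqcup\bld n)$; by the universal property of $\boxtimes$ this produces $\otimes\colon\Phi(\cA)\boxtimes\Phi(\cA)\to\Phi(\cA)$. Naturality on objects is immediate from the description of $\Phi(\cA)(\alpha)$ on objects, which merely permutes and pads strings; naturality on morphisms reduces, via Lemma~\ref{lemma generating morphisms for B}, to the generators $\partial^i_k$ and $\zeta^i_k$, where it follows by inspection of the diagram defining $\Phi(\cA)(\alpha)$ on morphisms together with the naturality and coherence of the braiding of $\cA$.

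Next I would check the monoid axioms: associativity and unitality of $\otimes$. Associativity amounts to the equality of the two functors $\Phi(\cA)\boxtimes\Phi(\cA)\boxtimes\Phi(\cA)\to\Phi(\cA)$ obtained by bracketing; on objects both give the triple concatenation, and on morphisms both give $f\otimes g\otimes h$ once one uses the associativity isomorphism of $\cA$ to identify the relevant $\otimes$-products. Strictly speaking $\cA$ is a \emph{strict} monoidal category by hypothesis, so $(\bld a_1\otimes\dots)\otimes(\bld b_1\otimes\dots)=\bld a_1\otimes\dots\otimes\bld b_1\otimes\dots$ on the nose, which makes this step a genuine equality rather than a coherence diagram. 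Unitality is equally direct since concatenating with the empty string does nothing and its $\otimes$-product is the strict unit $\bld u$. Thus $\Phi(\cA)$ is a monoid in $\Cat^{\B}$.

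The main point, and the step I expect to require the most care, is commutativity: I must show that the diagram \eqref{eq:commutativity-condition} (in its categorical form) commutes, i.e.\ that
\[
\Phi(\cA)(\chi_{\bld m,\bld n})\circ\otimes = \otimes\circ\mathrm{twist}
\]
as functors $\Phi(\cA)(\bld m)\times\Phi(\cA)(\bld n)\to\Phi(\cA)(\bld n\sqcup\bld m)$. On objects, $\otimes\circ\mathrm{twist}$ sends $\big((\bld a_\bullet),(\bld b_\bullet)\big)$ to $(\bld b_1,\dots,\bld b_n,\bld a_1,\dots,\bld a_m)$, while $\Phi(\cA)(\chi_{\bld m,\bld n})$ applied to the concatenation $(\bld a_1,\dots,\bld a_m,\bld b_1,\dots,\bld b_n)$ reindexes by the underlying permutation $\bar\chi_{\bld m,\bld n}$, which is exactly the block transposition, again giving $(\bld b_1,\dots,\bld b_n,\bld a_1,\dots,\bld a_m)$; so the two agree on objects. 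On morphisms one must use the recipe defining $\Phi(\cA)(\chi_{\bld m,\bld n})$: decompose $\chi_{\bld m,\bld n}=\Upsilon(\nu)\circ\xi$ with $\nu$ the order-preserving injection underlying the block transposition and $\xi\in\cB_{m+n}$ the residual braid, and recall that $\xi_*$ is the canonical action of $\xi$ on the $(m\!+\!n)$-fold $\otimes$-product of the concatenated string. The key identity is that the action of this particular braid $\xi$ on $\bld a_1\otimes\dots\otimes\bld a_m\otimes\bld b_1\otimes\dots\otimes\bld b_n$ is precisely the braiding isomorphism of $\cA$ that transposes the block $\bld a_1\otimes\dots\otimes\bld a_m$ past the block $\bld b_1\otimes\dots\otimes\bld b_n$ — this follows from the hexagon axioms in $\cA$, which express the block braiding as the appropriate composite of the elementary braidings $\zeta^i$. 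Granting this, chasing the morphism $f\otimes g$ around the defining square for $\Phi(\cA)(\chi_{\bld m,\bld n})$ shows it equals $g\otimes f$ after the block-braiding conjugation, which is exactly what $\otimes\circ\mathrm{twist}$ produces. Hence \eqref{eq:commutativity-condition} commutes and $\Phi(\cA)$ is a commutative monoid. Functoriality of $\Phi$ as a map $\BrCat\to\cC(\Cat^{\B})$ is then immediate: a braided strict monoidal functor $\cA\to\cA'$ preserves $\otimes$, units, and the braiding on the nose, hence induces a morphism $\Phi(\cA)\to\Phi(\cA')$ of commutative $\B$-category monoids, and this assignment respects composition and identities because it does so levelwise in $\cA$.
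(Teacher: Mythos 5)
Your proof is correct and follows essentially the same approach as the paper: define $\otimes$ by concatenation on objects and $f\otimes g$ on morphisms, check naturality on the generators from Lemma~\ref{lemma generating morphisms for B}, and verify commutativity by observing that on objects the criterion \eqref{eq:commutativity-condition} is immediate while on morphisms it reduces to naturality of the braiding in $\cA$ (which you spell out in more detail by conjugating $f\otimes g$ by the block braiding). One small imprecision: since $\chi_{\bld m,\bld n}$ is an isomorphism in $\B$, the decomposition of Lemma~\ref{lemma unique decomposition} gives $\nu=\id_{\bld m\sqcup\bld n}$ and $\xi=\chi_{\bld m,\bld n}$ itself, not a block-transposition $\nu$ with a residual $\xi$ as you phrase it; the rest of your argument already uses this implicitly and is unaffected.
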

\begin{proof}
 We define  functors $\otimes\colon \Phi(\mathcal{A})(\bld{m})\times\Phi(\mathcal{A})(\bld{n})\ra \Phi(\mathcal{A})(\bld{m}\sqcup\bld{n})$ by 
\[
(\bld a_1,\dots,\bld a_m)\otimes(\bld b_1,\dots,\bld b_n)=
(\bld a_1,\dots,\bld a_m,\bld b_1,\dots,\bld b_n)
\]
on objects and by applying the monoidal structure $f\otimes g$ of $\cA$ on morphisms. These functors are natural in $(\bld m,\bld n)$ as one verifies by checking for the generating morphisms in Lemma~\ref{lemma generating morphisms for B}. By the universal property of the $\boxtimes$-product we therefore get an associative product on $\Phi(\cA)$. It is clear that the object $\emptyset$ in $\Phi(\cA)(\bld 0)$ specifies a unit for this multiplication. The categorical analogue of the criteria for commutativity expressed by the commutativity of  \eqref{eq:commutativity-condition} clearly holds on objects and on morphisms it follows from the naturality of the braiding on $\cA$.
\end{proof}

\begin{remark}\label{rem:Phi-non-strict}
The definition of $\Phi(\cA)$ can be extended to braided monoidal small categories $\cA$ that are not necessarily strict monoidal. Indeed, the objects of $\Phi(\cA)(\bld n)$ are again  $n$-tuples $(\bld a_1,\dots,\bld a_n)$ of objects in $\cA$ and a morphism
from  $(\bld a_1,\dots,\bld a_n)$ to $(\bld b_1,\dots,\bld b_n)$ is defined to be a morphism
\[
(\cdots((\bld a_1\otimes\bld a_2)\otimes \bld a_3)\otimes\dots\otimes\bld a_{n-1})\otimes\bld a_n\to
(\cdots((\bld b_1\otimes\bld b_2)\otimes \bld b_3)\otimes\dots\otimes\bld b_{n-1})\otimes\bld b_n
\]
in $\cA$. Proceeding as in the strict monoidal case, the coherence theory for braided monoidal categories ensures that $\Phi(\cA)$ canonically has the structure of a commutative $\B$-category monoid. This is functorial with respect to braided strong monoidal functors that strictly preserve the unit objects.
\end{remark}

We shall view $\cC(\Cat^{\cB})$ as the full subcategory of $\BrCatB$ given by the braided $\B$-category monoids with identity braiding $\otimes=\otimes\circ \mathfrak b$. 

\begin{proposition}\label{prop:Phi-Bint-equivalence}
The composite functor
\[
\BrCat\xr{\Phi} \cC(\Cat^{\B}) \xr{} \BrCatB\xr{\B\!\int\!}\BrCat
\]
is related to the identity functor on $\BrCat$ by a natural weak equivalence.
\end{proposition}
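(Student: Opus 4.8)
The plan is to produce the required natural weak equivalence as a single natural transformation $P\colon \B\!\int\!\Phi(-)\Rightarrow\id_{\BrCat}$. For a braided strict monoidal small category $\cA$, define $P_\cA\colon \B\!\int\!\Phi(\cA)\to\cA$ by sending an object $(\bld n,(\bld a_1,\dots,\bld a_n))$ to the iterated product $\bld a_1\otimes\dots\otimes\bld a_n$ (the empty product being $\bld u$), and a morphism $(\alpha,s)\colon(\bld m,\bld a)\to(\bld n,\bld b)$ to the composite $s\circ\alpha_*$, where $\alpha_*\colon \bld a_1\otimes\dots\otimes\bld a_m\to \bld a_{\bar\alpha^{-1}(1)}\otimes\dots\otimes\bld a_{\bar\alpha^{-1}(n)}$ is the canonical isomorphism built from the braiding of $\cA$ that is already used to define $\Phi(\cA)$ on morphisms; if $\alpha=\Upsilon(\mu)\circ\zeta$ is the decomposition of Lemma~\ref{lemma unique decomposition}, then $\alpha_*$ depends only on the braid $\zeta$, since tensoring in the unit objects inserted by $\mu$ is the identity. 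Note that $s\circ\alpha_*$ indeed has source $\bld a_1\otimes\dots\otimes\bld a_m$ and target $\bld b_1\otimes\dots\otimes\bld b_n$, so $P_\cA$ has the right type. Functoriality of $P_\cA$ amounts to the cocycle identity $(\beta\circ\alpha)_*=\beta_*\circ\alpha_*$ together with the compatibility of $\alpha_*$ with the action of $\Phi(\cA)$ on morphisms; both follow from coherence for braided monoidal categories, and can be verified on the generating morphisms of Lemma~\ref{lemma generating morphisms for B} exactly as in the verification that $\Phi(\cA)$ is well defined.

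Next I would check that $P_\cA$ is a morphism in $\BrCat$. Strict monoidality on objects is immediate, since $(\bld m,\bld a)\otimes(\bld n,\bld b)$ is the concatenated tuple over $\bld{m+n}$; on morphisms it uses that the braid part of $\alpha\sqcup\beta$ is $\zeta\sqcup\xi$ and that $(\zeta\sqcup\xi)_*=\zeta_*\otimes\xi_*$. For the braiding, recall that we regard $\Phi(\cA)$ as a commutative $\B$-category monoid with identity braiding, so the braiding of $\B\!\int\!\Phi(\cA)$ furnished by Proposition~\ref{prop:Grothendieck-promotion} at $(\bld m,\bld a),(\bld n,\bld b)$ is $(\chi_{\bld m,\bld n},\id)$; applying $P_\cA$ turns this into $(\chi_{\bld m,\bld n})_*$, which by construction is precisely the braiding isomorphism of $\cA$ between $\bld a_1\otimes\dots\otimes\bld a_m$ and $\bld b_1\otimes\dots\otimes\bld b_n$. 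Naturality of $P$ in $\cA$ is clear, since $P_\cA$ is built entirely from the tensor product and braiding of $\cA$, which are strictly preserved by morphisms in $\BrCat$.

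It remains to show that every $P_\cA$ is a weak equivalence. Let $j_\cA\colon \cA=\Phi(\cA)(\bld 1)\hookrightarrow \B\!\int\!\Phi(\cA)$ be the inclusion of the fibre over $\bld 1$, that is $\bld a\mapsto(\bld 1,(\bld a))$ and $f\mapsto(\id_{\bld 1},f)$; then $P_\cA\circ j_\cA=\id_\cA$, so it suffices to prove that $\nerve j_\cA$ is a weak equivalence. By Thomason's theorem \eqref{eq:Thomason-equivalence} we have $\nerve(\B\!\int\!\Phi(\cA))\simeq\hocolim_{\bld n\in\B}\nerve\Phi(\cA)(\bld n)$, compatibly with the canonical inclusions of the fibres, so that $\nerve j_\cA$ corresponds to the inclusion of the summand indexed by $\bld 1$. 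The inclusion $\B_+\hookrightarrow\B$ of the full subcategory on the nonzero objects is homotopy final: its comma category over $\bld 0$ is $\B_+$ itself, while over $\bld n$ with $n\geq 1$ it has the initial object $(\bld n,\id_{\bld n})$; and $\B_+$ is contractible, because the endofunctor $-\sqcup\bld 1$ receives natural transformations from both $\id_{\B_+}$ and the constant functor at $\bld 1$. Hence $\hocolim_{\bld n\in\B}\nerve\Phi(\cA)(\bld n)\simeq\hocolim_{\bld n\in\B_+}\nerve\Phi(\cA)(\bld n)$. By Lemma~\ref{lem:Phi-positive-equivalences} every structure map of the restriction of $\nerve\Phi(\cA)$ to $\B_+$ is a weak equivalence, so by the homotopy colimit version of Quillen's Theorem~B the projection of this homotopy colimit to $\nerve\B_+$ has the summand inclusions $\nerve\Phi(\cA)(\bld n)\to\hocolim$ as weak equivalences; since $\nerve\B_+$ is contractible it follows that $\nerve j_\cA$, and therefore $\nerve P_\cA$, is a weak equivalence.

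The main obstacle is the bookkeeping behind the first two paragraphs: showing that $P_\cA$ is a well-defined functor that strictly preserves both the monoidal product and the braiding. This is elementary but delicate, because the identification \eqref{eq:B-identification} and the resulting composition rule $(\nu,\xi)\circ(\mu,\zeta)=(\nu\circ\xi_*(\mu),\mu^*(\xi)\circ\zeta)$ force one to track how the canonical isomorphisms $\alpha_*$ interact with braid multiplication as well as with the $\cM$-part of a braided injection; all of this is governed by braided coherence but must be handled with care. The weak equivalence part, by contrast, is purely conceptual and rests only on Lemma~\ref{lem:Phi-positive-equivalences} together with the contractibility and homotopy finality of $\B_+$.
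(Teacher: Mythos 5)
Your proposal is correct and takes essentially the same approach as the paper's proof: you define $P_\cA$ by tensoring the tuple and using the canonical braid isomorphism on morphisms, observe that $P_\cA\circ j_\cA = \id_\cA$ (the paper phrases this as Thomason's $\eta$ composed with $P$ giving the canonical identification), and then show the fibre inclusion over $\bld 1$ is a weak equivalence by restricting to $\B_+$, invoking Lemma~\ref{lem:Phi-positive-equivalences}, and using the constancy of the diagram together with contractibility of $\nerve\B_+$ -- exactly the paper's strategy, with your ``homotopy colimit version of Quillen's Theorem~B'' being the paper's citation of \cite[Lemma~IV.5.7]{GJ}.
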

\begin{proof}
For a braided strict monoidal category $\cA$ we define a functor $P\colon \B\!\int\!\Phi(\cA)\to \cA$ such that $P$ takes an object $(\bld m,(\bld a_1,\dots,\bld a_m))$ to $\bld a_1\otimes\dots\otimes\bld a_m$. A morphism $(\alpha,f)$ from $(\bld m,(\bld a_1,\dots,\bld a_m))$ to $(\bld n,(\bld b_1,\dots,\bld b_n))$ is given by a morphism $\alpha\colon \bld m\to\bld n$ in $\B$ together with a morphism $f$ from
$\bld a_{\bar\alpha^{-1}(1)}\otimes\dots\otimes\bld a_{\bar\alpha^{-1}(n)}$ to $\bld b_1\otimes \dots\otimes\bld b_n$. Using Lemma~\ref{lemma unique decomposition} we get a factorization $\alpha=\Upsilon(\nu)\circ \xi$ with $\nu\in \cM(\bld m,\bld n)$ and $\xi\in \cB_m$, and let $P(\alpha,f)$ be the composition 
\[
\xymatrix{ 
\bld a_1\otimes\dots\otimes \bld a_m  \ar[r]^{P(\alpha,f)} \ar[d]_{\xi_*}& \bld b_1\otimes\dots\otimes \bld b_n \\
\bld a_{\bar\xi^{-1}(1)}\otimes\dots\otimes\bld a_{\bar\xi^{-1}(m)} \ar@{=}[r]&
\bld a_{\bar\alpha^{-1}(1)}\otimes\dots\otimes\bld a_{\bar\alpha^{-1}(n)}.\ar[u]^f
}
\]
It is straight forward to check that $P$ is a braided strict monoidal functor. Furthermore, it follows from the definition of Thomason's equivalence \eqref{eq:Thomason-equivalence} that the composition 
\[
\nerve(\Phi(\cA)(\bld 1))\to \hocolim_{\bld n\in \B}\nerve(\Phi(\cA)(\bld n))\xr{\eta} \textstyle\nerve(\B\!\int\!\Phi(\cA))\xr{P} \nerve(\cA)
\] 
is the canonical identification. Hence it suffices to prove that the first map, induced by the inclusion $\{\bld 1\}\to\B$, is a weak equivalence. To this end we first restrict $\nerve(\Phi(\cA))$ to $\B_+$ such that all the structure maps are weak equivalences by Lemma~\ref{lem:Phi-positive-equivalences}. Then it follows from \cite[Lemma~IV.5.7]{GJ} that the diagram 
\[
\xymatrix{
\nerve(\Phi(\cA)(\bld 1)) \ar[r] \ar[d]& \nerve(\Phi(\cA))_{h\B_+}\ar[d]\\
\{\bld 1\} \ar[r] &\nerve(\B_+)
}
\]  
is homotopy cartesian, and since $\nerve(\B_+)$ is contractible this in turn implies that $\nerve(\Phi(\cA)(\bld 1)) \to\nerve(\Phi(\cA))_{h\B_+}$ is a weak equivalence. Secondly, it is easy to see that the inclusion of $\B_+$ in $\B$ is homotopy cofinal such that the induced map 
$\nerve(\Phi(\cA))_{h\B_+}\to \nerve(\Phi(\cA))_{h\B}$ is a weak equivalence by \cite[Theorem~19.6.13]{hirsch}.
\end{proof}

Combining the result obtained in this section we get the following theorem.

\begin{theorem}\label{thm:Braided-B-Cat-rectification}
Every braided $\B$-category monoid is related to a strictly commutative $\B$-category monoid by a chain of natural $\B$-equivalences in $\BrCatB$.
\end{theorem}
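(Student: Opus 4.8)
The plan is to produce the required chain by combining two applications of the bar resolution from Section~\ref{subsec:B-categories-Grothendieck} with Propositions~\ref{prop:Grothendieck-promotion} and~\ref{prop:Phi-Bint-equivalence}. Fix a braided $\B$-category monoid $A$. By Lemma~\ref{lem:B-down-bullet-braided} the bar resolution $\overline A$ is again a braided $\B$-category monoid, and Lemmas~\ref{lem:ev-equivalence} and~\ref{lem:pi-equivalence} provide $\B$-equivalences $\ev\colon \overline A\to A$ and $\pi\colon \overline A\to \Delta(\B\!\int\! A)$ that are morphisms in $\BrCatB$ and natural in $A$; this is precisely the zig-zag already used in the proof of Proposition~\ref{prop:Bint-Delta--braided-equivalence}. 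Since $\B\!\int\! A$ is a braided strict monoidal category by Proposition~\ref{prop:Grothendieck-promotion}, this relates $A$, naturally and through $\B$-equivalences in $\BrCatB$, to the constant braided $\B$-category monoid $\Delta(\B\!\int\! A)$.

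Next I would run the same construction on the commutative $\B$-category monoid $\Phi(\B\!\int\! A)$, which makes sense because $\B\!\int\! A$ lies in $\BrCat$. Its bar resolution yields natural $\B$-equivalences $\ev\colon \overline{\Phi(\B\!\int\! A)}\to \Phi(\B\!\int\! A)$ and $\pi\colon \overline{\Phi(\B\!\int\! A)}\to \Delta(\B\!\int\!\Phi(\B\!\int\! A))$ in $\BrCatB$. It then remains to connect the two constant monoids $\Delta(\B\!\int\! A)$ and $\Delta(\B\!\int\!\Phi(\B\!\int\! A))$, and for this I invoke Proposition~\ref{prop:Phi-Bint-equivalence}, which supplies a natural weak equivalence $P\colon \B\!\int\!\Phi(\B\!\int\! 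A)\to \B\!\int\! A$ in $\BrCat$. Applying the constant embedding $\Delta$ gives a morphism $\Delta(P)$ in $\BrCatB$; it is a $\B$-equivalence because $\B$ has an initial object, so that $\B\!\int\!\Delta(-)$ is naturally isomorphic to $\B\times(-)$ and hence $\B\!\int\!\Delta(P)\cong \id_{\B}\times P$ is a weak equivalence of categories (compare the proof of Proposition~\ref{prop:Bint-Delta-equivalence}).

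Splicing the two halves together produces the zig-zag of $\B$-equivalences in $\BrCatB$, natural in $A$,
\[
\textstyle A\xleftarrow{\ev}\overline A\xrightarrow{\pi}\Delta(\B\!\int\! A)\xleftarrow{\Delta P}\Delta(\B\!\int\!\Phi(\B\!\int\! A))\xleftarrow{\pi}\overline{\Phi(\B\!\int\! A)}\xrightarrow{\ev}\Phi(\B\!\int\! A),
\]
whose right-hand term $\Phi(\B\!\int\! A)$ is a strictly commutative $\B$-category monoid, since $\Phi$ takes values in $\cC(\Cat^{\B})$. This is exactly what the statement asks for.

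All the verifications are bookkeeping once the earlier results are granted, and the one point I expect to require a little care — the main (modest) obstacle — is checking that every arrow in the zig-zag preserves the braiding, i.e.\ is a genuine morphism in $\BrCatB$ rather than merely of underlying $\B$-categories. For the four bar-resolution maps this is precisely the content of Lemma~\ref{lem:B-down-bullet-braided} together with the braiding-compatibility of $\ev$ and $\pi$ noted in the proof of Proposition~\ref{prop:Bint-Delta--braided-equivalence}; for $\Delta(P)$ it is automatic, since $\Delta$ carries braided strict monoidal functors to morphisms of constant braided $\B$-category monoids. Keeping track of the naturality of all six maps in $A\in\BrCatB$ simultaneously is then routine.
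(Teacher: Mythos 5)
Your proposal is correct and is essentially the paper's own proof: the paper's chain $A\simeq \Delta(\B\!\int\! A)\simeq \Delta(\B\!\int\!\Phi(\B\!\int\! A))\simeq \Phi(\B\!\int\! A)$ is exactly your zig-zag, with the outer equivalences unpacked into the bar-resolution maps $\ev$ and $\pi$ (as in the proof of Proposition~\ref{prop:Bint-Delta--braided-equivalence}) and the middle one given by applying $\Delta$ to the weak equivalence of Proposition~\ref{prop:Phi-Bint-equivalence}. Your additional checks (that $\Delta(P)$ is a $\B$-equivalence and that all arrows live in $\BrCatB$) are the same points the paper relies on.
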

\begin{proof}
Given a braided $\B$-category monoid $A$, we have the following chain of $\B$-equivalences
\[
\textstyle A\simeq \Delta(\B\!\int\! A)\simeq \Delta\big(\B\!\int\!\Phi(\B\!\int\!A)\big) \simeq \Phi(\B\!\int\!A).
\]
The first and last equivalences are the chains of $\B$-equivalences $\Delta(\B\!\int\!(-))\simeq(-)$ from the proof of Proposition~\ref{prop:Bint-Delta--braided-equivalence} and the 
$\B$-equivalence in the middle is obtained by applying $\Delta$ to the weak equivalence $\B\!\int\!\Phi(-)\simeq(-)$ in 
Proposition~\ref{prop:Phi-Bint-equivalence}.
\end{proof}

\section{$E_2$ spaces and braided commutativity}\label{sec:E2-spaces}
Building on the categorical foundations in the last section, we proceed to show that every $E_2$ space can be represented by a strictly commutative $\B$-space monoid up to $\B$-equivalence.

\subsection{Operadic interpretation of braided monoidal structures}\label{subsec:operads}
In order to relate our results from the previous section to multiplicative structures on spaces, it is convenient to work with an operadic interpretation of braided monoidal structures. By a $\Cat$-operad we understand an operad internal to the category $\Cat$. Thus, a $\Cat$-operad $\sM$ is given by a sequence of small categories $\sM(k)$ for $k\geq 0$ together with functors 
\[
\gamma \colon \sM(k)\times \sM(j_1)\times\dots\times \sM(j_k)\to \sM(j_1+\dots j_k),
\]
a unit object $\bld 1\in \sM(1)$, and a right $\Sigma_k$-action on $\sM(k)$. These data are required to satisfy the usual axioms for associativity, unity, and equivariance as listed in \cite[Definition~1.1]{may}. We shall always assume that a $\Cat$-operad $\sM$ is reduced in the sense that $\sM(0)$ is the terminal category with one object and one morphism. A $\Cat$-operad as above gives rise to a monad $\mathbb M$ on $\Cat$ by letting 
\[
\mathbb M(X)=\coprod_{n\geq 0}\sM(k)\times_{\Sigma_k}X^k
\]    
for a small category $X$. Here $X^0$ denotes the terminal category. By definition, an $\sM$-algebra in $\Cat$ is an algebra for this monad and we write $\sM$-$\Cat$ for the category of $\sM$-algebras. An algebra structure $\theta\colon \mathbb M(X)\to X$ is determined by a family of functors 
$\theta_k\colon \sM(k)\times X^k\to X$ satisfying the axioms listed in \cite[Lemma~1.4]{may}.

Following \cite[Section~ 8]{FSV} we introduce a $\Cat$-operad $\Br$ such that $\Br$-algebras are braided strict monoidal small categories. The objects of $\Br(k)$ are the elements $a\in \Sigma_k$ and given objects $a$ and $b$, a morphism $\alpha\colon a\to b$ is an element $\alpha\in \cB_k$ such that 
$\bar\alpha a=b$. Composition in $\Br(k)$ is inherited from $\cB_k$ and the right action of an element $g\in \Sigma_k$ is defined on objects and morphisms by taking $(\alpha\colon a\to b)$ to $(\alpha\colon ag\to bg)$. The structure map 
\[
\gamma\colon \Br(k)\times \Br(j_1)\times\dots\times\Br(j_k)\to \Br(j_1+\dots+j_k)
\]
is defined on objects by 
\[
\gamma(a,b_1,\dots,b_k)=a(j_1,\dots,j_k)\circ b_1\sqcup\dots\sqcup b_k
\]
where $a(j_1,\dots,j_k)$ denotes the block permutation of $\bld j_1\sqcup\dots\sqcup \bld j_k$ specified by $a$. The action on morphisms is analogous except for the obvious permutation of the indices. 
Let $\sA$ be the discrete $\Cat$-operad given by the objects of $\Br$. It is well-known and easy to check that $\sA$-algebras are the same thing as monoids in $\Cat$, that is, strict monoidal small categories. Hence a $\Br$-algebra $X$ has an underlying strict monoidal category with unit object determined by the structure map $\theta_0\colon \Br(0)\times X^0\to X$ and monoidal structure $\otimes=\theta_2(1_2,-,-)$ determined by restricting the structure map $\theta_2\colon\Br(2)\times X^2\to X$ to the unit object $1_2\in \Br(2)$. With $t$ the non-unit object of $\Br(2)$ and $\zeta$ the generator of 
$\cB_2$, the morphism $\zeta\colon 1_2\to t$ determines a natural transformation 
\[
\theta_2(\zeta,\id_{\bld x_1},\id_{\bld x_2})\colon \bld x_1\otimes\bld x_2\to \bld x_2\otimes \bld x_1
\]  
which gives a braiding of $X$. Conversely, for a braided strict monoidal category $X$ we define a $\Br$-algebra structure by the functors
$
\theta_k\colon \Br(k)\times X^k\to X
$
taking a tuple of morphisms $\alpha\colon a\to b$ in $\Br(k)$ and $f_i\colon \bld x_i\to\bld y_i$ in $X$ for $i=1,\dots,k$, to the composition in the commutative diagram
\[
\xymatrix{
\bld x_{a^{-1}(1)}\otimes\dots\otimes \bld x_{a^{-1}(k)} \ar[r]^{\alpha_*}  
\ar[d]_{f_{a^{-1}(1)}\otimes \dots\otimes f_{a^{-1}(k)}} &
\bld x_{b^{-1}(1)}\otimes\dots\otimes \bld x_{b^{-1}(k)}
\ar[d]^{f_{b^{-1}(1)}\otimes \dots\otimes f_{b^{-1}(k)}} \\
\bld y_{a^{-1}(1)}\otimes\dots\otimes \bld y_{a^{-1}(k)} \ar[r]^{\alpha_*} &
 \bld y_{b^{-1}(1)}\otimes\dots\otimes \bld y_{b^{-1}(k)}.
}
\]  
Here $\alpha_*$ denotes the canonical action of $\alpha$ defined by the braided monoidal structure. Summarizing, we have the following consistency result that justifies our use of the notation $\Br$-$\Cat$ in the previous section.

\begin{lemma}\label{lem:Cat-Br-algebras}
The category  $\BrCat$ of $\Br$-algebras is isomorphic to the category of braided strict monoidal categories. \qed
\end{lemma}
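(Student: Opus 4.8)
The plan is to establish a mutually inverse pair of functors between $\BrCat$ (in the operadic sense, i.e.\ $\Br$-algebras) and the category of braided strict monoidal small categories, and to verify that these functors are strictly inverse to one another on the nose, not merely up to isomorphism. First I would make precise the two constructions already sketched before the statement: starting from a $\Br$-algebra $X$ with structure functors $\theta_k\colon\Br(k)\times X^k\to X$, one extracts a braided strict monoidal structure by setting the unit to be the object picked out by $\theta_0$, the tensor to be $\otimes=\theta_2(1_2,-,-)$, the associativity and unit constraints to be identities (using the operad axioms for $\gamma$ restricted to the unit objects $1_k\in\Br(k)$, which force $\theta_k(1_k,-,\dots,-)$ to be the $k$-fold iterate of $\otimes$ with the standard parenthesization), and the braiding to be the natural transformation $\theta_2(\zeta,\id,\id)$. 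Conversely, from a braided strict monoidal category $X$ one builds $\theta_k$ by the commutative-square formula in the excerpt. I would then check that these two assignments are inverse to each other.

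The key steps, in order: (1) Show that if $X$ carries a $\Br$-algebra structure, the derived tensor $\otimes$ is strictly associative and strictly unital — this is exactly the content of the associativity and unity axioms for the operad applied to the chosen unit objects $1_k$, together with reducedness $\Br(0)=\ast$; one must track how $\gamma(1_2,1_2,1_1)$ and $\gamma(1_2,1_1,1_2)$ both equal $1_3$ in $\Br(3)$, which gives $(\bld x\otimes\bld y)\otimes\bld z=\bld x\otimes(\bld y\otimes\bld z)$ on the nose. (2) Verify the hexagon axioms for the braiding $\theta_2(\zeta,\id,\id)$ by unwinding the operad structure maps applied to the braid-group elements $\gamma(\zeta;1_1,1_2)$ and $\gamma(\zeta;1_2,1_1)$ in $\cB_3$ and comparing with products of $\zeta$'s — this reproduces the two defining relations of the braid groupoid $\Br(3)$ and hence the hexagons. (3) Going the other way, check that the $\theta_k$ reconstructed from a braided strict monoidal structure do satisfy the $\Br$-algebra axioms in \cite[Lemma~1.4]{may}: associativity of $\gamma$ corresponds to coherence (naturality and functoriality of $\alpha_*$ under composition in $\cB_k$ and under the block-permutation formula for $\gamma$), equivariance corresponds to the right $\Sigma_k$-action reindexing the tuples, and unity is immediate. (4) Finally, observe that starting from a $\Br$-algebra, extracting the braided monoidal data, and rebuilding $\theta_k$ recovers the original functors — here one uses that every object $a\in\Br(k)$ can be written as $a=\gamma(1_k;\dots)$ composed with the $\Sigma_k$-action, and every morphism factors through the generators, so $\theta_k$ is determined by its values on $(1_2,-,-)$ and $(\zeta,-,-)$ — and conversely that the round trip on the braided monoidal side is the identity since all the constraint isomorphisms produced are identities. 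One checks morphisms similarly, and functoriality of both assignments is routine.

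The main obstacle I anticipate is step (3)/(4): verifying that the $\theta_k$ built from $\alpha_*$ genuinely assemble into a functor out of $\Br(k)\times X^k$ that is associative with respect to $\gamma$, because this is precisely a repackaging of the full coherence theorem for braided monoidal categories. One needs to know that the action $\alpha_*$ of an element $\alpha\in\cB_k$ on a $k$-fold tensor product — defined by composing elementary braidings according to any word representing $\alpha$ — is well-defined independently of the word, which is the braided coherence theorem of Joyal–Street (cited in the paper as \cite{JS}). Given that, the operad axioms become bookkeeping: the block-permutation formula $\gamma(a,b_1,\dots,b_k)=a\langle j_1,\dots,j_k\rangle\circ(b_1\sqcup\dots\sqcup b_k)$ matches the way nested braidings compose, and the $\Sigma_k$-equivariance matches the reindexing $\bld x_{a^{-1}(i)}$ in the definition of $\theta_k$. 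I would state this dependence on coherence explicitly and otherwise present the verification as a direct check, since no genuinely new idea is needed beyond what \cite{FSV} already contains; indeed the cleanest exposition simply cites \cite[Section~8]{FSV} for the construction of $\Br$ and notes that the identification with braided strict monoidal categories is immediate from the definitions.
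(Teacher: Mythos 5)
Your proposal is correct and follows essentially the same route as the paper, which treats the lemma as a summary of the two constructions given in the preceding paragraphs (extracting $\otimes=\theta_2(1_2,-,-)$ and the braiding $\theta_2(\zeta,\id,\id)$ from a $\Br$-algebra, and rebuilding $\theta_k$ from the canonical braid-group action $\alpha_*$) and marks it \qed without further argument. Your explicit flagging of the dependence on braided coherence for the well-definedness of $\alpha_*$ is a reasonable elaboration of what the paper leaves implicit in the phrase ``canonical action.''
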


It is natural to ask for an analogous operadic characterization of braided $\B$-category monoids. However, since the symmetric groups do not act on the iterated $\boxtimes$-products in $\Cat^{\B}$, we instead have to work with braided operads as introduced by Fiedorowicz \cite{fied}. By definition, a braided $\Cat$-operad $\sM$ is a sequence of small categories $\sM(k)$ for $k\geq 0$ together with structure maps and a unit just as for a $\Cat$-operad. The difference from an (unbraided) $\Cat$-operad is that in the braided case we require a right $\cB_k$-action on $\sM(k)$ for all $k$ such that the braided analogue of the equivariance axiom for a $\Cat$-operad holds. A braided $\Cat$-operad $\sM$ defines a monad on $\Cat^{\B}$ by letting 
\[
\mathbb M(X)=\coprod_{k\geq 0}\sM(k)\times_{\cB_k}X^{\boxtimes k}
\] 
for a $\B$-category $X$. By definition, an $\sM$-algebra in $\Cat^{\B}$ is an algebra for this monad and we write $\sM$-$\Cat^{\B}$ for the category of 
$\sM$-algebras. It follows from the universal property of the $\boxtimes$-product that an $\sM$-algebra structure on a $\B$-category $X$ can be described in terms of functors
\begin{equation}\label{eq:level-Br-algebra}
\theta_k\colon\sM(k)\times X(\bld n_1)\times\dots\times X(\bld n_k)\to X(\bld n_1\sqcup\dots\sqcup\bld n_k)
\end{equation}
such that the usual associativity and unity axioms hold as well as the equivariance axiom stating that the diagram
\[
\xymatrix{
\sM(k)\times X(\bld n_1)\times\dots\times X(\bld n_k) \ar[r]^-{\theta_k\circ(\sigma\times\id)} \ar[d]_{\id\times\bar\sigma}& 
X(\bld n_1\sqcup\dots\sqcup\bld n_k)\ar[d]^{X(\sigma(n_1,\dots,n_k))} \\ 
\sM(k)\times X(\bld n_{\bar \sigma^{-1}(1)})\times \dots\times X(\bld n_{\bar\sigma^{-1}(k)}) \ar[r]^-{\theta_k} & 
X(\bld n_{\bar \sigma^{-1}(1)}\sqcup\dots \sqcup\bld n_{\bar\sigma^{-1}(k)})
}
\]
is commutative for all $\sigma\in \cB_k$. We also use the notation $\Br$ for the braided $\Cat$-operad for which the category $\Br(k)$ has objects the 
elements $a\in \cB_k$ and a morphism $\alpha\colon a\to b$ is an element $\alpha\in \cB_k$ such that $\alpha a=b$. The structure maps making this a braided $\Cat$-operad are defined as for the analogous unbraided operad. Let $\sA$ be the discrete braided $\Cat$-operad given by the objects in $\Br$. It is easy to see that an $\sA$-algebra in $\Cat^{\B}$ is the same thing as a $\B$-category monoid and hence that a $\Br$-algebra is a $\B$-category monoid with extra structure. Indeed, suppose that $X$ is a $\Br$-algebra in $\Cat^{\B}$ and write $\otimes\colon X\boxtimes X\to X$ for the monoid structure defined by restricting $\theta_2\colon \Br(2)\times X^{\boxtimes 2}\to X$ to the unit object $1_2\in \Br(2)$. With $\zeta$ the standard generator of $\cB_2$, the morphism $\zeta\colon 1_2\to \zeta$ determines a natural isomorphism $\Theta=\theta(\zeta,-,-)$ as in the diagram \eqref{eq:Theta-braiding} and $\Theta$ satisfies the axioms for a braiding of $X$. Arguing as in the unbraided setting we get the following analogue of Lemma~\ref{lem:Cat-Br-algebras}.

\begin{lemma}\label{lem:Cat-B-Br-algebras}
The category $\BrCatB$ is isomorphic to the category of braided $\B$-category monoids.\qed
\end{lemma}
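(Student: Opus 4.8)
The plan is to verify that the two descriptions of $\BrCatB$ coincide: that of $\Br$-algebras in $\Cat^{\B}$ for the braided $\Cat$-operad $\Br$, and that of braided $\B$-category monoids in the sense of Definition~\ref{def:B-cat-braiding}. The argument runs exactly parallel to the proof of Lemma~\ref{lem:Cat-Br-algebras}, with $\Sigma_k$ replaced throughout by $\cB_k$ and with the canonical symmetry action on iterated products replaced by the braid action supplied by the braiding $\Theta$. First I would identify the underlying monoid: restricting a $\Br$-algebra structure along the inclusion $\sA\hookrightarrow\Br$ of the discrete braided $\Cat$-operad on objects produces a $\B$-category monoid, since the monad associated with $\sA$ is $X\mapsto\coprod_{k\ge 0}\cB_k\times_{\cB_k}X^{\boxtimes k}\cong\coprod_{k\ge 0}X^{\boxtimes k}$, the free monoid monad on $\Cat^{\B}$; the unit $\bld u\in X(\bld 0)$ is determined by $\theta_0$ and the multiplication $\otimes$ is obtained by restricting $\theta_2\colon\Br(2)\times X^{\boxtimes 2}\to X$ to the unit object $1_2\in\Br(2)$.

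Next, from a $\Br$-algebra $X$ I would extract the braiding and check the axioms of Definition~\ref{def:B-cat-braiding}. The standard generator $\zeta\in\cB_2$ is a morphism $\zeta\colon1_2\to\zeta$ in $\Br(2)$, and $\Theta=\theta_2(\zeta,-,-)$ is then a natural transformation $\otimes\Rightarrow\otimes\circ\br$ as in \eqref{eq:Theta-braiding}; that it has the form $\Theta_{\bld m,\bld n}\colon\bld a\otimes\bld b\to X(\chi_{\bld m,\bld n}^{-1})(\bld b\otimes\bld a)$ required before Definition~\ref{def:B-cat-braiding}, and that it is compatible with the morphisms $\alpha\sqcup\beta$, both follow from the naturality and equivariance of $\theta_2$. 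The two hexagon-type diagrams of Definition~\ref{def:B-cat-braiding} are obtained by expanding each side with the operad associativity and equivariance axioms for $\Br$; they reduce to the defining braid relation $\zeta_3^1\zeta_3^2\zeta_3^1=\zeta_3^2\zeta_3^1\zeta_3^2$ in $\cB_3$, exactly as the hexagon axioms for a braided strict monoidal category arise in the unbraided case.

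Conversely, given a braided $\B$-category monoid $(A,\otimes,\bld u,\Theta)$ I would build the structure maps $\theta_k\colon\Br(k)\times A(\bld n_1)\times\cdots\times A(\bld n_k)\to A(\bld n_1\sqcup\cdots\sqcup\bld n_k)$ of \eqref{eq:level-Br-algebra} by iterating $\otimes$ and $\Theta$: a braid $a\in\cB_k$, composed with the block braid $A\big(a(\bld n_1,\dots,\bld n_k)\big)$, determines $\theta_k(a,-)$ on objects via a suitable composite of instances of $\Theta$, and a morphism $\alpha\colon a\to b$ in $\Br(k)$ acts by the corresponding composite $\alpha_\ast$. The coherence diagrams of Definition~\ref{def:B-cat-braiding}, equivalently the braid relations, guarantee that this is independent of the chosen factorization into generators, so $\theta_k$ is well defined, and the operad associativity, unity and equivariance axioms follow by the same bookkeeping as in Lemma~\ref{lem:Cat-Br-algebras}. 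Finally one checks that these two passages are mutually inverse and that a map of $\B$-categories commutes with all the $\theta_k$ precisely when it is a strict map of $\B$-category monoids compatible with the braidings, i.e.\ a morphism in $\BrCatB$.

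The step I expect to be the main obstacle is the well-definedness of the $\theta_k$ for $k\ge 3$: one needs that the braid group $\cB_k$ acts coherently on $k$-fold $\boxtimes$-products through $\Theta$, which amounts to the coherence theorem for braided monoidal categories transported to the braided monoidal $2$-category $\Cat^{\B}$ in the sense of \cite[Section~5]{JS}, the two hexagon axioms of Definition~\ref{def:B-cat-braiding} supplying precisely the relations that make this work. Everything else is a routine adaptation of the unbraided argument behind Lemma~\ref{lem:Cat-Br-algebras}.
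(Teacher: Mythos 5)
Your proposal is correct and follows exactly the argument the paper sketches in the paragraph preceding the lemma: restrict along $\sA\hookrightarrow\Br$ to get the underlying $\B$-category monoid, read off $\Theta=\theta_2(\zeta,-,-)$ from the morphism $\zeta\colon 1_2\to\zeta$ in $\Br(2)$, and in the converse direction build the $\theta_k$ from iterated $\otimes$ and $\Theta$ using coherence of the braiding, exactly as the paper does for $\Br$-$\Cat$ in the discussion around Lemma~\ref{lem:Cat-Br-algebras}. Your remark on the coherence issue for $k\ge 3$ is the point the paper defers to its remark on $\Cat^{\B}$ being a braided monoidal $2$-category in the sense of \cite[Section~5]{JS}.
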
  

\subsection{Rectification of $E_2$ algebras}\label{subsec:E2-rectification}
Applying the nerve functor $\nerve$ to the unbraided $\Cat$-operad $\Br$ we get an operad $\nerve\Br$ in simplicial sets with $k$th space $\nerve\Br(k)$. This is an $E_2$ operad in the sense that its geometric realization is equivalent to the little 2-cubes operad, cf.\ \cite[Proposition~8.13]{FSV}. Since the nerve functor preserves products it is clear that it induces a functor $\nerve\colon \BrCat\to \text{$\nerve\Br$-$\cS$}$. This was first observed by Fiedorowicz~\cite{fied}, and is the braided version of the analogous construction for permutative categories considered by May~\cite{may2}. Similarly, the braided version of the $\Cat$-operad $\Br$ gives rise to the braided operad 
$\nerve\Br$ in simplicial sets. By the level-wise characterization of $\Br$-algebras in \eqref{eq:level-Br-algebra} it is equally clear that the level-wise nerve induces a functor $\nerve\colon \BrCatB\to\text{$\nerve\Br$-$\cS^{\B}$}$.

Now we want to say that the homotopy colimit functor induces a functor from $\nerve\Br$-$\cS^{\B}$ to $\nerve\Br$-$\cS$, but to explain this properly requires some preparation. Recall that the pure braid group $\cP_k$ is the kernel of the projection $\varPi\colon\cB_k\to \Sigma_k$. Following \cite{FSV}, a braided operad $\sM$ can be ``debraided'' to an (unbraided) operad $\sM/\cP_k$ with $k$th term the orbit space $\sM(k)/\cP_k$. The structure maps are inherited from the structure maps of $\sM$ and $\Sigma_k$ acts from the right via the isomorphism $\Sigma_k\cong \cB_k/\cP_k$. For instance, the debraiding of the braided $\Cat$-operad $\Br$ is the corresponding unbraided $\Cat$-operad $\Br$ and similarly for the braided operad $\nerve\Br$. In the following lemma we consider the product of the latter with an arbitrary braided operad $\sM$ and form the debraided operad $(\nerve\Br\times \sM)/\cP$.

\begin{lemma}\label{lem:hocolim-promoted}
Let $\sM$ be a braided operad in simplicial sets. Then the homotopy colimit functor can be promoted to a functor 
\[
(-)_{h\B}\colon \text{$\sM$-$\cS^{\B}$}\to \text{$(\nerve\Br\times \sM)/\cP$-$\cS$}.
\]
\end{lemma}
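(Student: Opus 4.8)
I will produce, for each $\sM$-algebra $X$ in $\cS^{\B}$, an action of the operad $(\nerve\Br\times\sM)/\cP$ on the space $X_{h\B}$, and check that this is functorial in $X$. The starting point is the structure maps of an $\sM$-algebra in $\cS^{\B}$, which by the level-wise description analogous to \eqref{eq:level-Br-algebra} are maps
\[
\theta_k\colon \sM(k)\times X(\bld n_1)\times\dots\times X(\bld n_k)\to X(\bld n_1\sqcup\dots\sqcup\bld n_k).
\]
Combining these with the monoidal structure maps $\nu$ on $(-)_{h\B}$ from Section~\ref{sec:homotopy-B-spaces} (the iterated version of $\nu_{X,Y}$, using that $(-)_{h\B}$ is lax monoidal) and with the fact that $\nerve\B(k)=\B(\bld 0,-)$-style homotopy colimits of representables are built from the nerves of the comma categories, I want to assemble a map
\[
\nerve\Br(k)\times\sM(k)\times (X_{h\B})^k\to X_{h\B}.
\]
Concretely, a point of the left side over a $k$-simplex involves chains $\bld n_0^{(i)}\leftarrow\dots\leftarrow\bld n_j^{(i)}$ in $\B$ for $i=1,\dots,k$ together with a $k$-simplex of $\nerve\Br(k)$, i.e.\ a chain of braids, and a $k$-simplex of $\sM(k)$; the braid part is used to produce a morphism $\bld n^{(1)}\sqcup\dots\sqcup\bld n^{(k)}\to \bld n^{(\bar\sigma^{-1}(1))}\sqcup\dots$ in $\B$ (the block braid associated to the chosen braid), which records exactly the failure of $\sqcup$ to be symmetric, and $\theta_k$ together with this morphism feeds the result back into the homotopy colimit.

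\textbf{Key steps, in order.} First I would make precise the lax monoidal structure on $(-)_{h\B}\colon \cS^{\B}\to\cS$ refining Lemma~\ref{lemma X and Y flat implies weak equivalnece between times and boxtimes}: the natural transformations $\nu_{X_1,\dots,X_k}\colon X_{1,h\B}\times\dots\times X_{k,h\B}\to (X_1\boxtimes\dots\boxtimes X_k)_{h\B}$ are associative and unital, which follows from the corresponding property of $\sqcup$ on $\B$ and standard facts about homotopy colimits over products of categories. Second, I would unwind the definition of an $\sM$-algebra structure on a $\B$-category/$\B$-space to the level-wise maps $\theta_k$ and the equivariance diagram for $\sigma\in\cB_k$ displayed just before Lemma~\ref{lem:Cat-B-Br-algebras}; this is where the braid group action on $\sM(k)$ and on the indexing chains interacts. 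Third, I would write down the composite action map as above, the crucial point being that a $k$-simplex of $\nerve\Br(k)$ provides precisely the braids $\chi$ needed to identify $\bld n_1\sqcup\dots\sqcup\bld n_k$ with its permuted rearrangements inside $\B$, so that the $\cP_k$-coinvariants appear because pure braids act trivially on the underlying objects and hence their contribution is already absorbed by the homotopy colimit. Fourth, I would verify the operad algebra axioms — associativity, unit, and $\Sigma_k$-equivariance for $(\nerve\Br\times\sM)/\cP$ — by reducing each to the corresponding axiom for $\sM$-algebras in $\cS^{\B}$ together with the coherence of $\nu$; the equivariance reduces to the equivariance diagram for $\theta_k$ after passing to $\cP_k$-orbits. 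Finally, naturality in $X$ is immediate since every map in sight is built from $\theta_k$, $\nu$, and structure maps of $\B$.

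\textbf{The main obstacle.} The delicate point is bookkeeping the braids: one must check that the block-braid construction $\sigma\mapsto\sigma(n_1,\dots,n_k)$ sending a simplex of $\nerve\Br(k)$ to a morphism in $\B$ is compatible, on the nose, with both the operadic composition $\gamma$ (so that associativity holds) and with passage to $\cP_k$-orbits (so that the action descends to $(\nerve\Br\times\sM)/\cP$ rather than merely to $\nerve\Br\times\sM$). This is exactly the content of the equivariance axiom for $\sM$-algebras in $\cS^{\B}$, combined with the fact — already used in Section~\ref{sec:braided-injections} — that block braids are natural with respect to the order-preserving morphisms $\partial^i_n$ in $\B$. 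Once this compatibility is verified, all the operad axioms follow formally, so I expect the bulk of the work to be in setting up notation carefully enough that the braid identities are transparent; the homotopy-theoretic input (contractibility of $\nerve\B$, behaviour of homotopy colimits) is all standard and has already been recorded above.
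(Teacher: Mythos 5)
Your proposal is correct and follows essentially the same route as the paper: the paper merely packages the construction via the degreewise Grothendieck construction $\B\!\int\!X$ (whose nerve is $X_{h\B}$), defining the action there on objects by $(a,\mathsf m,(\bld m_i,x_i))\mapsto(\bld m_{\bar a^{-1}(1)}\sqcup\dots\sqcup\bld m_{\bar a^{-1}(k)},X(a(m_1,\dots,m_k))\theta_k(\mathsf m,x_1,\dots,x_k))$, which is exactly your combination of $\theta_k$, the lax monoidal structure $\nu$, and the block braids. One small correction of emphasis: the descent to $\cP_k$-orbits is not because the homotopy colimit ``absorbs'' the block pure braid $\sigma(n_1,\dots,n_k)$ (a genuinely nontrivial automorphism in $\B$), but because --- exactly as you say in your final paragraph --- the equivariance axiom for $\theta_k$ gives $X(\sigma(n_1,\dots,n_k))\circ\theta_k(\mathsf m\sigma,x_1,\dots,x_k)=\theta_k(\mathsf m,x_{\bar\sigma^{-1}(1)},\dots,x_{\bar\sigma^{-1}(k)})$ on the nose, so one first obtains a braided operad action on $X_{h\B}$ and then notes that $\cB_k$ acts on $(X_{h\B})^k$ only through $\Sigma_k$, whence pure braids act trivially.
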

\begin{proof}
Let $X$ be a $\B$-space with $\sM$-action defined by natural maps
\[
\theta_k\colon \sM(k)\times X(\bld n_1)\times\dots\times X(\bld n_k)\to X(\bld n_1\sqcup\dots\sqcup\bld n_k).
\] 
To $X$ we associate the simplicial category (that is, simplicial object in $\Cat$) $\B\!\int\!X$ obtained by applying the Grothendieck construction in each simplicial degree of $X$ thought of as a $\B$-diagram of simplicial discrete categories. It is clear from the definition that the nerve of $\B\!\int\!X$ can be identified with $X_{h\B}$. Let us further view $\Br(k)$ as a constant simplicial category and $\sM(k)$ as a simplicial discrete category. Then we define maps of simplicial categories
\[
\textstyle\theta_k\colon \Br(k)\times \sM(k)\times (\B\!\int\!X)^k\to \B\!\int\!X
\]
such that a tuple of objects $a\in\Br(k)$, $\mathsf m\in \sM(k)$, and $(\bld m_i,x_i)\in \B\!\int\!X$ for $i=1,\dots,k$, is mapped to the object
\[
\big(\bld m_{\bar a^{-1}(1)}\sqcup\dots\sqcup \bld m_{\bar a^{-1}(k)},X(a(m_1,\dots,m_k))\theta_k(\mathsf m,x_1,\dots,x_k)\big).
\]
A tuple of morphisms $\alpha\colon a\to b$ in $\Br(k)$ and $\beta_i\colon (\bld m_i,x_i)\to (\bld n_i,y_i)$ in $\B\!\int\!X$ for $i=1,\dots,k$, is mapped to the morphism specified by
\[
\alpha(m_{\bar\alpha^{-1}(1)},\dots,m_{\bar\alpha^{-1}(k)})\circ \beta_{\bar\alpha^{-1}(1)}\sqcup\dots\sqcup\beta_{\bar\alpha^{-1}(k)}.
\]
Evaluating the nerves of these simplicial categories we get a map of bisimplicial sets and by restricting to the simplicial diagonal a map of simplicial sets 
\[
\textstyle(\nerve\Br(k)\times \sM(k))\times \nerve(\B\!\int\!X)^k\to \nerve(\B\!\int\!X).
\]
It is not difficult to check that these maps satisfy the conditions for a braided operad action and hence descends to an action of the debraided operad 
$(\nerve\Br\times \sM)/\cP$. Clearly this is functorial in $X$.
\end{proof}

When $\sM$ is the braided operad $\nerve \Br$ we can compose with the diagonal map of (unbraided) operads 
$ \nerve\Br/\cP\to(\nerve\Br\times \nerve\Br)/\cP$ to get the next lemma.

\begin{lemma}\label{lem:hocolim-Br-promoted}
The homotopy colimit functor can be promoted to a functor
\[
(-)_{h\B}\colon \text{$\nerve\Br$-$\cS^{\B}$}\to \text{$\nerve\Br$-$\cS$}.
\eqno\qed
\]
\end{lemma}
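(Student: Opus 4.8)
The plan is to obtain the promotion by specializing Lemma~\ref{lem:hocolim-promoted} and then restricting the resulting operad action along a map of debraided operads. Concretely, I would take the auxiliary braided operad $\sM$ in Lemma~\ref{lem:hocolim-promoted} to be the braided operad $\nerve\Br$ itself. That lemma then immediately yields a functor
\[
(-)_{h\B}\colon \text{$\nerve\Br$-$\cS^{\B}$}\to \text{$(\nerve\Br\times\nerve\Br)/\cP$-$\cS$},
\]
so the only remaining task is to exhibit a map of (unbraided) operads $\nerve\Br\to(\nerve\Br\times\nerve\Br)/\cP$ and use the induced restriction functor on algebras, which runs in the required direction (restriction of scalars sends $(\nerve\Br\times\nerve\Br)/\cP$-algebras to $\nerve\Br$-algebras).

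The required map is the diagonal. First I would observe that the componentwise diagonals $\nerve\Br(k)\to\nerve\Br(k)\times\nerve\Br(k)$ assemble into a map of braided $\Cat$-operads $\nerve\Br\to\nerve\Br\times\nerve\Br$: compatibility with the operadic composition and with the unit, together with $\cB_k$-equivariance, is automatic because the structure maps and the $\cB_k$-actions on the product operad are defined componentwise. Debraiding, i.e.\ passing from a braided operad $\sM$ to the operad with $k$th term $\sM(k)/\cP_k$, is functorial, so the diagonal descends to a map of operads $\nerve\Br/\cP\to(\nerve\Br\times\nerve\Br)/\cP$. As recalled before Lemma~\ref{lem:hocolim-promoted}, the debraiding of the braided operad $\nerve\Br$ is canonically the unbraided operad $\nerve\Br$; on the level of categorical operads this is the identification of $\Br(k)/\cP_k$ with the unbraided $\Br(k)$, and applying $\nerve$ levelwise preserves it since $\cP_k$ acts freely on the objects of $\Br(k)$, so that $\nerve$ commutes with the relevant quotient. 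Under this identification we obtain a genuine map of operads $\nerve\Br\to(\nerve\Br\times\nerve\Br)/\cP$.

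Finally I would compose the restriction functor along this operad map with the functor coming from Lemma~\ref{lem:hocolim-promoted}, obtaining the asserted
\[
(-)_{h\B}\colon \text{$\nerve\Br$-$\cS^{\B}$}\to \text{$\nerve\Br$-$\cS$};
\]
on underlying spaces it is still the Bousfield--Kan homotopy colimit, since that is already true for the functor of Lemma~\ref{lem:hocolim-promoted} and restriction of scalars does not alter underlying objects. I do not foresee a genuine obstacle here: the substantive work, namely constructing the braided operad action on $X_{h\B}$, is carried out in Lemma~\ref{lem:hocolim-promoted}, and what remains is the routine check that the diagonal is a map of braided operads together with the bookkeeping identification $\nerve\Br/\cP\cong\nerve\Br$.
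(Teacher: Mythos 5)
Your proposal is correct and matches the paper's own argument: the paper obtains the lemma by taking $\sM=\nerve\Br$ in Lemma~\ref{lem:hocolim-promoted} and restricting along the diagonal map of operads $\nerve\Br\cong\nerve\Br/\cP\to(\nerve\Br\times\nerve\Br)/\cP$, which is exactly what you do. Your write-up simply fills in the (genuinely routine) checks that the diagonal is a map of braided operads, that debraiding is functorial, and that $\nerve$ commutes with the $\cP_k$-quotient, all of which the paper leaves implicit.
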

The natural maps introduced so far are compatible in the expected way.
\begin{proposition}\label{prop:Thomason-diagram}
The diagram
\[
\xymatrix{
\BrCatB \ar[r]^{\nerve} \ar[d]_{\B\!\int} & \text{$\nerve\Br$-$\cS^{\B}$} \ar[d]^{(-)_{h\B}}\\
\BrCat \ar[r]^{\nerve} & \text{$\nerve\Br$-$\cS$}
}
\]
commutes up to natural weak equivalence. 
\end{proposition}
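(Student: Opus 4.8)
The plan is to identify the comparison natural transformation with (a refinement of) Thomason's weak equivalence $\eta$ of \eqref{eq:Thomason-equivalence} and to verify that it is compatible with the $\nerve\Br$-algebra structures on the two sides.

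First I would make the two $\nerve\Br$-actions explicit. On the bottom--right composite, $\B\!\int\! A$ is a braided strict monoidal category by Proposition~\ref{prop:Grothendieck-promotion}, hence a $\Br$-algebra in $\Cat$ by Lemma~\ref{lem:Cat-Br-algebras}, and $\nerve(\B\!\int\! A)$ inherits the $\nerve\Br$-action obtained by applying the nerve to the structure functors $\Br(k)\times(\B\!\int\! A)^k\to\B\!\int\! A$; these are assembled out of the operad action $\theta$ of $A$, the concatenation of objects of $\B$, and the block braidings $\chi$, just as the braiding $(\chi,\Theta)$ of $\B\!\int\! A$ in Proposition~\ref{prop:Grothendieck-promotion} is. On the top--left composite, $\nerve A$ is a $\nerve\Br$-algebra in $\cS^{\B}$ (levelwise nerve of the braided $\B$-category monoid $A$, via Lemma~\ref{lem:Cat-B-Br-algebras} and the description \eqref{eq:level-Br-algebra}), and $(\nerve A)_{h\B}$ carries the action of Lemma~\ref{lem:hocolim-Br-promoted}, namely the $(\nerve\Br\times\nerve\Br)/\cP$-action produced by Lemma~\ref{lem:hocolim-promoted} with $\sM=\nerve\Br$, restricted along the operad diagonal. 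Recall from the proof of Lemma~\ref{lem:hocolim-promoted} that this action lives on the diagonal nerve of the simplicial category $\B\!\int\!(\nerve A)$, which is $(\nerve A)_{h\B}$.

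The comparison is then set up as follows. The recipe of Lemma~\ref{lem:hocolim-promoted} applies verbatim to the braided $\B$-category monoid $A$ itself, with the braided $\Cat$-operad $\Br$ in place of a braided operad in simplicial sets, and produces a $(\Br\times\Br)/\cP$-action on the honest category $\B\!\int\! A$ in $\Cat$; applying the nerve and using that $\cP_k$ acts freely gives a $(\nerve\Br\times\nerve\Br)/\cP$-action on $\nerve(\B\!\int\! A)$. I would then establish two points. First, the restriction of this categorical action along the operad diagonal is exactly the $\Br$-algebra structure on $\B\!\int\! A$ of Proposition~\ref{prop:Grothendieck-promotion}: both are built from $\theta$, concatenation and $\chi$, and the only possible discrepancy --- whether the transposition of two factors is recorded through the braiding $\Theta$ of $A$ or through the action of the generator of $\cB_2$ via $\theta$ --- vanishes because $\Theta$ is by definition that action (Lemma~\ref{lem:Cat-B-Br-algebras}); the passage to the debraided operad is precisely what makes the two block-braiding bookkeepings agree, since it kills the pure braid groups $\cP_k$ on which they might a priori differ. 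Second, Thomason's comparison $\eta$ of \eqref{eq:Thomason-equivalence} is a map of $(\nerve\Br\times\nerve\Br)/\cP$-algebras from $(\nerve A)_{h\B}$, with the Lemma~\ref{lem:hocolim-promoted} action, to $\nerve(\B\!\int\! A)$, with the nerve of the categorical action just described; this follows from the naturality of $\eta$ in the $\B$-category together with its compatibility with the multi-ary inputs $\sqcup\colon\B\times\B\to\B$ that enter the definition of the $\theta_k$. Granting these, restriction along the operad diagonal turns $\eta$ into a natural weak equivalence of $\nerve\Br$-algebras, which is the assertion of the proposition. If $\eta$ is not already strictly multiplicative, one runs the same argument through the bar-type construction that computes $\eta$, obtaining instead a zig-zag of natural $\nerve\Br$-algebra weak equivalences.

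The step I expect to be the main obstacle is the matching just described: checking, compatibly with the operad structure maps $\gamma$ and the relevant equivariance, that the block-braiding-twisted concatenation coming out of Lemma~\ref{lem:hocolim-promoted} agrees with the monoidal product and braiding of $\B\!\int\! A$, and that Thomason's comparison (or the bar construction underlying it) respects all of this. In short it is the pure-braid-group and block-permutation bookkeeping; the remainder is formal naturality of the Grothendieck and homotopy colimit constructions.
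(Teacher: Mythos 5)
Your proposal is correct and follows essentially the same route as the paper: identify the comparison map with Thomason's equivalence $\eta$ from \eqref{eq:Thomason-equivalence}, make the $\nerve\Br$-actions on $\nerve(\B\!\int\!A)$ and $(\nerve A)_{h\B}$ explicit via Proposition~\ref{prop:Grothendieck-promotion}, Lemma~\ref{lem:Cat-Br-algebras}, Lemma~\ref{lem:Cat-B-Br-algebras} and Lemma~\ref{lem:hocolim-Br-promoted}, and check compatibility using Thomason's explicit formula. Your intermediate factorization through the debraided product operad $(\nerve\Br\times\nerve\Br)/\cP$ is a reasonable way to organize the same bookkeeping that the paper likewise defers as ``straightforward although somewhat tedious,'' and the hedge about a possible zig-zag is unnecessary since $\eta$ is strictly compatible with the actions.
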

\begin{proof}
Given a braided $\B$-category $X$, we claim that  Thomason's equivalence $\eta$ in \eqref{eq:Thomason-equivalence} is in fact a morphism in $\text{$\nerve\Br$-$\cS$}$.
In order to verify the claim we first use Proposition~\ref{prop:Grothendieck-promotion} and 
Lemma~\ref{lem:Cat-Br-algebras} to get an explicit description of the $\nerve\Br$-algebra structure on $\nerve(\B\!\int\!X)$. Secondly, we use Lemmas~\ref{lem:Cat-B-Br-algebras} and 
\ref{lem:hocolim-Br-promoted} to get an explicit description of the $\nerve\Br$-algebra structure on 
$(\nerve X)_{h\B}$. It is then straight forward (although somewhat tedious) to check that Thomason's explicit description of $\eta$ in \cite[Lemma~1.2.1]{T} is compatible with the algebra structures.
\end{proof}

We proceed to show that the functor $(-)_{h\B}$ in Lemma~\ref{lem:hocolim-Br-promoted} induces an equivalence after suitable localizations of the domain and target. Let us write $w$ for the class of morphisms in \text{$\nerve\Br$-$\cS$} whose underlying maps of spaces are weak equivalences and $w_{\B}$ for the class of morphisms in \text{$\nerve\Br$-$\cS^{\B}$} whose underlying maps of $\B$-spaces are $\B$-equivalences. The following is the $\B$-space version of Proposition~\ref{prop:Bint-Delta--braided-equivalence}. As usual $\Delta$ denotes the constant functor embedding. 

\begin{proposition}\label{prop:hocolim-B-equivalence}
The functors $(-)_{h\B}$ and $\Delta$ induce an equivalence of the localized categories
\[
(-)_{h\B}\colon \text{$\nerve\Br$-$\cS^{\B}$}[w_{\B}^{-1}]\simeq \text{$\nerve\Br$-$\cS$}[w^{-1}]:\!\Delta.
\]
\end{proposition}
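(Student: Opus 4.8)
The plan is to follow the same two-step pattern used for Proposition~\ref{prop:Bint-Delta--braided-equivalence}: exhibit a chain of natural $\B$-equivalences relating an arbitrary object to one pulled back along $(-)_{h\B}$, and separately identify the composite $\Delta(-)_{h\B}$ in the other direction, then invoke the abstract localization criterion of Appendix~\ref{app:localization} (Proposition~\ref{prop:localization-equivalence}). First I would introduce the $\B$-space analogue of the bar resolution: for a $\B$-space $X$ set $\overline X(\bld n)=\hocolim_{(\B\down\bld n)}X\circ \pi_n$, or equivalently apply the simplicial-category bar resolution of Section~\ref{subsec:E2-rectification} levelwise. The $E_2$ structure is inherited exactly as in Lemma~\ref{lem:B-down-bullet-braided}: an $\nerve\Br$-action on $X$ induces one on $\overline X$ via the structure maps of $(\B\down\bullet)$ and the block-sum maps $(\B\down\bld m)\times(\B\down\bld n)\to (\B\down\bld m\sqcup\bld n)$, so $\overline{(-)}$ promotes to an endofunctor on $\nerve\Br$-$\cS^{\B}$.

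Next I would establish the $\B$-space versions of Lemmas~\ref{lem:ev-equivalence} and~\ref{lem:pi-equivalence}: the evaluation map $\ev\colon \overline X\to X$ is a levelwise weak equivalence (each $\hocolim_{(\B\down\bld n)}X\circ\pi_n\to X(\bld n)$ is an equivalence because $(\B\down\bld n)$ has a terminal object $1_n$, so the homotopy colimit is computed by the value at the terminal object), and the projection $\pi\colon \overline X\to \Delta(X_{h\B})$ is a $\B$-equivalence (the induced map $(\overline X)_{h\B}\to (\Delta(X_{h\B}))_{h\B}\simeq \nerve\B\times X_{h\B}\to X_{h\B}$ fits into a square with $(\ev)_{h\B}$ and the projection away from $\nerve\B$, commuting up to the same natural transformation $(\gamma,\id)$ as in Lemma~\ref{lem:pi-equivalence}; since $(\ev)_{h\B}$ is an equivalence by the levelwise statement and cofinality/Bousfield--Kan, and $\nerve\B$ is contractible, the conclusion follows). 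Both $\ev$ and $\pi$ are visibly maps of $\nerve\Br$-algebras by construction, so this gives a chain of natural $\B$-equivalences in $\nerve\Br$-$\cS^{\B}$ relating $X$ to $\Delta(X_{h\B})$.

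For the other composite I would observe that, by Lemma~\ref{lem:hocolim-Br-promoted} and the debraiding construction, for a constant $\B$-space $\Delta(Y)$ with $Y$ an $\nerve\Br$-space the homotopy colimit $\Delta(Y)_{h\B}$ is $\nerve\B_{+\text{-type}}$-fattened: concretely $(\Delta Y)_{h\B}=\nerve(\B\!\int\!\Delta Y)$ where $\B\!\int\!\Delta Y$ is the simplicial category $\B\times Y$ (a simplicial discrete category in each degree), so $(\Delta Y)_{h\B}\cong \nerve\B\times Y$, and the projection $\nerve\B\times Y\to Y$ is a weak equivalence since $\B$ has an initial object; one checks from the explicit formula in Lemma~\ref{lem:hocolim-promoted} that this projection is a map of $\nerve\Br$-algebras. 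Together with the first part, the hypotheses of Proposition~\ref{prop:localization-equivalence} are met and the equivalence of localized categories follows; one should also note that the localization of $\nerve\Br$-$\cS$ exists, e.g.\ because $\nerve\Br$-$\cS$ carries a model structure (or via the cofibrant-generation argument underlying Proposition~\ref{prop B-model structure} applied to algebras), which is the analogue of the remark opening the proof of Proposition~\ref{prop:Bint-Delta--braided-equivalence}.

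The main obstacle I anticipate is verifying that the structure maps $\ev$, $\pi$, and the projection $\nerve\B\times Y\to Y$ genuinely respect the $\nerve\Br$-operad actions; this is conceptually routine but notationally heavy because the action on a homotopy colimit of a braided diagram is defined only after passing through the debraided operad $(\nerve\Br\times\nerve\Br)/\cP$ and the diagonal $\nerve\Br/\cP\to (\nerve\Br\times\nerve\Br)/\cP$, so one must unwind the block-permutation bookkeeping from Lemma~\ref{lem:hocolim-promoted} carefully. A secondary technical point is confirming that $(\ev)_{h\B}\colon (\overline X)_{h\B}\to X_{h\B}$ is a weak equivalence; this needs either that $\ev$ is a levelwise equivalence between objects that are flat enough, or a direct Bousfield--Kan spectral sequence / cofinality argument, analogous to the use of \cite[Lemma~IV.5.7]{GJ} and \cite[Theorem~19.6.13]{hirsch} in the proof of Proposition~\ref{prop:Phi-Bint-equivalence}.
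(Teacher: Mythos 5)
Your proposal matches the paper's argument almost verbatim: the paper likewise introduces the $\B$-space bar resolution $\overline X(\bld n)=\hocolim_{(\B\down\bld n)}X\circ\pi_{\bld n}$, promotes it to an endofunctor on $\nerve\Br$-$\cS^{\B}$ by arguing as in Lemma~\ref{lem:hocolim-promoted}, produces the chain $X\xleftarrow{\ev}\overline X\xrightarrow{\pi}\Delta(X_{h\B})$ of $\B$-equivalences in $\nerve\Br$-$\cS^{\B}$ mirroring Lemmas~\ref{lem:ev-equivalence} and \ref{lem:pi-equivalence}, identifies $\Delta(Y)_{h\B}$ with $\nerve\B\times Y$, and invokes Proposition~\ref{prop:localization-equivalence} after noting that $\text{$\nerve\Br$-$\cS$}[w^{-1}]$ exists as the homotopy category of a model structure. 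Your added discussion of why $\ev$ is a levelwise equivalence (terminal object in $(\B\down\bld n)$) and of the compatibility checks for the operad action is correct detail the paper leaves implicit.
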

 For the proof of the proposition we need to invoke the \emph{bar resolution} for $\B$-spaces. Given a $\B$-space $X$, this is the $\B$-space $\overline X$ defined by 
\[
\overline X(\bld n)=\hocolim_{(\B\!\downarrow \bld n)}X\circ\pi_{\bld n}
\] 
with notation as for the categorical bar resolution considered in Section~\ref{subsec:B-categories-Grothendieck}. (See e.g.\ \cite{Hollender-V_modules} for the interpretation of this as an actual bar construction.) Arguing as in the proof of Lemma~\ref{lem:hocolim-promoted} one sees that this construction can be promoted to an endofunctor on $\text{$\nerve\Br$-$\cS^{\B}$}$. 

\begin{proof}[Proof of Proposition~\ref{prop:hocolim-B-equivalence}]
First recall that the localization $\text{$\nerve\Br$-$\cS$}[w^{-1}]$ exists since it can be realized as the homotopy category of a suitable model structure. As for the categorical analogue in Proposition~\ref{prop:Bint-Delta--braided-equivalence} there are natural  $\B$-equivalences $\ev\colon \overline A\to A$ and $\pi\colon \overline A\to \Delta(A_{h\B})$ in $\nerve\Br$-$\cS^{\B}$. For a $\Br$-algebra $Y$ in $\cS$, the other composition 
$\Delta(Y)_{h\B}$ can be identified with the product algebra $\nerve\B\times Y$ such that the projection defines a weak equivalence of $\Br$-algebras $\Delta(Y)_{h\B}\xr{\sim} Y$. The statement therefore follows from Proposition~\ref{prop:localization-equivalence}.
\end{proof}

With these preparations we can finally prove that $\nerve\Br$-algebras in $\cS^{\B}$ can be rectified to strictly commutative $\B$-space monoids. Our proof of this result differs from the proof of the analogous categorical statement in Theorem~\ref{thm:Braided-B-Cat-rectification} since we do not have a space-level version of the rectification functor $\Phi$. Instead we shall make use of the functor $F\colon \text{$\nerve\Br$-$\cS$}\to \text{$\Br$-$\Cat$}$ introduced by 
Fiedorowicz-Stelzer-Vogt \cite{FSV} and then compose the latter with $\Phi$. The relevant facts about the functor $F$ are discussed in the context of localization in Example~\ref{ex:FSV-example}.  

\begin{theorem}\label{thm:Br-SB-rectification}
Every $\nerve\Br$-algebra in $\cS^{\B}$ is related to a strictly commutative $\B$-space monoid by a chain of natural $\B$-equivalences in 
$\nerve\Br$-$\cS^{\B}$.
\end{theorem}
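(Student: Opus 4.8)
The plan is to reduce the theorem to the categorical rectification already available through the functor $\Phi$, bridging the two by the homotopy colimit and the Fiedorowicz--Stelzer--Vogt functor $F$. Let $A$ be an $\nerve\Br$-algebra in $\cS^{\B}$. First I would use the bar resolution $\overline A$, which carries an $\nerve\Br$-algebra structure (as noted before Proposition~\ref{prop:hocolim-B-equivalence}) and for which the maps $\ev\colon\overline A\to A$ and $\pi\colon\overline A\to\Delta(A_{h\B})$ are $\B$-equivalences in $\nerve\Br$-$\cS^{\B}$ (as in the proof of that proposition); this yields a chain of natural $\B$-equivalences relating $A$ to $\Delta(A_{h\B})$. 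Now $A_{h\B}$ is an $\nerve\Br$-algebra in $\cS$, i.e.\ an $E_2$ space, so I would apply $F\colon\text{$\nerve\Br$-$\cS$}\to\BrCat$ and invoke the properties of $F$ recalled in Example~\ref{ex:FSV-example}, which provide a natural chain of weak equivalences in $\nerve\Br$-$\cS$ between $\nerve F(A_{h\B})$ and $A_{h\B}$. Since the constant embedding $\Delta$ is braided strong monoidal and $(-)_{h\B}\circ\Delta$ is naturally weakly equivalent to the identity (with $\Delta(Y)_{h\B}\cong\nerve\B\times Y$), $\Delta$ carries weak equivalences of $\nerve\Br$-algebras in $\cS$ to $\B$-equivalences of $\nerve\Br$-algebras in $\cS^{\B}$; applying it to the chain above relates $\Delta(A_{h\B})$ to $\Delta(\nerve F(A_{h\B}))$ in $\nerve\Br$-$\cS^{\B}$.

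Writing $\cA=F(A_{h\B})$ for the resulting braided strict monoidal category, the $\B$-category $\Phi(\cA)$ is a commutative $\B$-category monoid, hence applying the level-wise nerve gives a strictly commutative $\B$-space monoid $\nerve\Phi(\cA)$, which I regard as an $\nerve\Br$-algebra in $\cS^{\B}$ via the functor $\nerve\colon\BrCatB\to\text{$\nerve\Br$-$\cS^{\B}$}$ restricted to $\cC(\Cat^{\B})$. To connect this back to $\Delta(\nerve F(A_{h\B}))$ I would combine three equivalences: the bar-resolution chain of Proposition~\ref{prop:hocolim-B-equivalence} relating $\nerve\Phi(\cA)$ to $\Delta\big((\nerve\Phi(\cA))_{h\B}\big)$ in $\nerve\Br$-$\cS^{\B}$; Proposition~\ref{prop:Thomason-diagram} with $X=\Phi(\cA)$, giving a natural weak equivalence $(\nerve\Phi(\cA))_{h\B}\simeq\nerve(\B\!\int\!\Phi(\cA))$ in $\nerve\Br$-$\cS$; and Proposition~\ref{prop:Phi-Bint-equivalence} which, after applying $\nerve$, gives a natural weak equivalence $\nerve(\B\!\int\!\Phi(\cA))\simeq\nerve(\cA)=\nerve F(A_{h\B})$ in $\nerve\Br$-$\cS$. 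Applying $\Delta$ to the last two and concatenating everything produces
\[
A\ \simeq\ \Delta(A_{h\B})\ \simeq\ \Delta(\nerve F(A_{h\B}))\ \simeq\ \nerve\Phi(F(A_{h\B})),
\]
a chain of natural $\B$-equivalences in $\nerve\Br$-$\cS^{\B}$ whose target $\nerve\Phi(F(A_{h\B}))$ is a strictly commutative $\B$-space monoid, which is what we want.

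I expect the homotopy-theoretic content to be entirely contained in results already proved, so the main obstacle is structural bookkeeping: verifying at each stage that the comparison maps are morphisms, or natural zig-zags, of $\nerve\Br$-algebras and not merely of underlying $\B$-spaces. The two points needing genuine care are the compatibility of Thomason's equivalence with the $\nerve\Br$-algebra structures — which is precisely the content of Proposition~\ref{prop:Thomason-diagram}, used here with $X=\Phi(\cA)$ — and the claim that $\Delta$ carries the natural weak equivalences coming from $F$ and from Proposition~\ref{prop:Phi-Bint-equivalence} to natural $\B$-equivalences of $\nerve\Br$-algebras, which follows from $\Delta$ being braided strong monoidal together with $(-)_{h\B}\circ\Delta\simeq\id$. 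Beyond this I do not anticipate any essential difficulty.
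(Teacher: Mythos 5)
Your proposal is correct and follows essentially the same route as the paper: both reduce to the chain $A\simeq\Delta(A_{h\B})\simeq\nerve\Phi(F(A_{h\B}))$ using the bar resolution (Proposition~\ref{prop:hocolim-B-equivalence}), the Fiedorowicz--Stelzer--Vogt functor $F$, Proposition~\ref{prop:Phi-Bint-equivalence}, and Proposition~\ref{prop:Thomason-diagram}. The only difference is bookkeeping: the paper composes all the space-level weak equivalences first and applies $\Delta$ once, whereas you interleave the applications of $\Delta$, which amounts to the same argument.
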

\begin{proof}
Let $A$ be an $\nerve\Br$-algebra in $\cS^{\B}$. Then $A_{h\B}$ is an $\nerve\Br$-algebra in $\cS$ and applying the functor $F$ we get a $\Br$-algebra $F(A_{h\B})$ in $\Cat$. We claim that $A$ is related to the commutative $\B$-space monoid $\nerve\Phi(F(A_{h\B}))$ by a chain of $\B$-equivalences in $\nerve\Br$-$\cS^{\B}$. To this end we first proceed as in the proof of Proposition~\ref{prop:hocolim-B-equivalence} to get a chain of $\B$-equivalences $A\simeq \Delta(A_{h\B})$. Then we compose the chains of weak equivalences 
\[
\textstyle A_{h\B}\simeq \nerve F(A_{h\B})\simeq \nerve(\B\!\int\!\Phi(F(A_{h\B})))\simeq \nerve\Phi(F(A_{h\B}))_{h\B}
\]
defined respectively in \cite[C.2]{FSV}, Proposition~\ref{prop:Phi-Bint-equivalence}, and Proposition~\ref{prop:Thomason-diagram}. This in turn gives a chain of $\B$-equivalences 
\[
A\simeq\Delta(A_{h\B})\simeq  \Delta(\nerve\Phi(F(A_{h\B}))_{h\B})\simeq \nerve\Phi(F(A_{h\B})),
\]
again by Proposition~\ref{prop:hocolim-B-equivalence}.
\end{proof}

\begin{example}\label{ex:X-bullet-example-2}
In general an (unbraided) operad $\mathsf M$ in $\cS$ gives rise to a functor $\mathsf M\colon \cI^{\op}\to \cS$ as explained in \cite{CMT}. Given a based space $X$ we have the $\cI$-space $X^{\bullet}$ from Example~\ref{ex:X-bullet-example-1} and may form the coend $\mathsf M\otimes_{\cI}X^{\bullet}$ (whose geometric realization is denoted $\mathsf M|X|$ by May~\cite{may}). In the same way a braided operad $\mathsf M$ gives rise to a functor $\mathsf M\colon \B^{\op}\to\cS$ and using the same notation for the pullback of $X^{\bullet}$ to a $\B$-space we may form the coend $\mathsf M\otimes_{\B}X^{\bullet}$ considered by Fiedorowicz~\cite{fied}. Writing $\mathsf M/\mathcal P$ for the debraided operad, the fact that the pure braid groups $\mathcal P_n$ act trivially on $X^n$ implies that there is a natural isomorphism $\mathsf M\otimes_{\B}X^{\bullet}\cong \mathsf M/\mathcal P_n\otimes_{\cI}X^{\bullet}$. Now specialize to the braided operad $\nerve\Br$ and recall that the homotopy colimit $X^{\bullet}_{h\B}$ can be identified with the coend $\nerve(\bullet\!\downarrow\!\B)\otimes_{\B}X^{\bullet}$. Proceeding as in \cite[Section~4.2]{SHom} we define a map of $\B^{\op}$-spaces $\nerve(\bullet\!\downarrow\!\B)\to \nerve\Br$ such that the induced map of coends
\[
\nerve(\bullet\!\downarrow\!\B)\otimes_{\B}X^{\bullet} \to \nerve\Br\otimes_{\B}X^{\bullet}
\] 
is an equivalence. The above remarks together with the fact that the geometric realization of the debraiding $\nerve\Br/\mathcal P$ is equivalent to the little 2-cubes operad $\mathcal C_2$ imply that there are equivalences
\[
|\nerve\Br\otimes_{\B}X^{\bullet}|\cong |\nerve\Br/\mathcal P\otimes_{\cI}X^{\bullet}|
\simeq \mathcal C_2\otimes_{\cI}|X|^{\bullet}.
\]
For connected $X$ it therefore follows from \cite[Theorem~2.7]{may}
that the geometric realization of $X^{\bullet}_{h\B}$ is homotopy equivalent to $\Omega^2\Sigma^2(|X|)$. We may interpret this as saying that the commutative $\B$-space monoid $X^{\bullet}$ represents the 2-fold loop space $\Omega^2\Sigma^2(|X|)$.
\end{example}

\section{Classifying spaces for braided monoidal categories}\label{sec:classifying}
We consider a monoidal category $(\cA, \otimes, I)$ and therein a monoid $A$, a right $A$-module $M$, and a left $A$-module $N$.
Suppressing a choice of parentheses from the notation, the two-sided bar construction $B^{^{_\otimes}}_\bullet (M,A,N)$ is the simplicial object defined by
\begin{equation*}
 [k] \mapsto M\otimes A^{\otimes k}\otimes N
\end{equation*}
with structure maps as for the usual bar construction for spaces, see for instance \cite[Chapter~9]{may}. If the unit $I$ for the monoidal structure is 
both a right and left $A$-module we can define the bar construction on $A$ as 
$B^{^{_\otimes}}_\bullet(A)=B^{^{_\otimes}}_\bullet (I,A,I)$. This works in 
particular when $I$ is a terminal object in $\cA$. 

In order to say something about the multiplicative properties 
of $B^{^{_\otimes}}_\bullet(A)$ we investigate how monoids behave with respect to the monoidal product. 
If $\cA$ is a braided monoidal category with braiding $b$ the monoidal product $A\otimes B$ of two monoids 
$A$ and $B$ is again a monoid. Suppressing parentheses, the multiplication $\mu_{A\otimes B}$ is the morphism 
$$A\otimes B\otimes A\otimes B \xrightarrow{\id_A\otimes b_{B,A}\otimes \id_B} A\otimes A\otimes B\otimes B
\xrightarrow{\mu_A\otimes\mu_B} A\otimes B$$
where $\mu_A$ and $\mu_B$ are the multiplications of the monoids $A$ and $B$ respectively. Unlike in a symmetric 
monoidal category, the monoidal product of two commutative monoids in $\cA$ is not necessarily a commutative monoid. 
But it is straightforward to check that if $A$ is a commutative monoid, then the multiplication $\mu_A\co A\otimes A\ra A$ 
is a monoid morphism. 
Suppose given a monoid $A$ in $\cA$ such that the unit $I$ is a right and left $A$-module. Then the above implies that for each
$k$, $B^{^{_\otimes}}_k(A)$ is a monoid. If  in addition $A$ is commutative, the family of multiplication maps assemble into 
a morphism $B^{^{_\otimes}}_\bullet(A)\otimes B^{^{_\otimes}}_\bullet(A)\ra B^{^{_\otimes}}_\bullet(A)$ of simplicial objects, 
where the monoidal product is taken degreewise. The bar construction on a commutative 
monoid $A$ is a simplicial monoid in $\cA$ with this multiplication.

Now we specialize to the braided monoidal category $\cS^\B$ of $\B$-spaces. Here we can realize a 
simplicial object $Z_\bullet$ by taking the diagonal $|Z_\bullet|$ of the two simplicial 
directions to obtain a $\B$-space. We define the bar construction on a 
$\B$-space monoid $A$ as $\BBox(A)= |\BBox_\bullet (A)|$. 
From now on we will refer to the simplicial version 
as the simplicial bar construction. The above discussion about the multiplicative 
properties of the simplicial bar 
construction implies the following result.

\begin{lemma}
 The bar construction $\BBox(A)$ on a commutative $\B$-space monoid $A$ is a
 (not necessarily commutative) monoid in $\cS^\B$.
  \qed
\end{lemma}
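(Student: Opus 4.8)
The plan is to deduce the lemma from the general discussion preceding it together with the observation that the diagonal realization functor is monoidal for the $\boxtimes$-product.

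First I would record that the monoidal unit $U^{\B}$ of $\cS^{\B}$ is the terminal $\B$-space, so it carries a unique structure of left and right $A$-module for any $\B$-space monoid $A$. In particular the simplicial two-sided bar construction $\BBox_\bullet(A)=\BBox_\bullet(U^{\B},A,U^{\B})$ is defined, with $\BBox_k(A)\cong A^{\boxtimes k}$ for each $k$. Since $A$ is commutative, the preceding discussion — valid in any braided monoidal category — shows that each $\BBox_k(A)$ is a monoid in $\cS^{\B}$ and that the degreewise multiplications assemble into a morphism $\BBox_\bullet(A)\boxtimes \BBox_\bullet(A)\to\BBox_\bullet(A)$ of simplicial $\B$-spaces (the $\boxtimes$-product of simplicial objects being taken degreewise), making $\BBox_\bullet(A)$ a simplicial monoid in $\cS^{\B}$.

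Next I would show that the diagonal realization functor $|{-}|$ from simplicial objects in $\cS^{\B}$ to $\cS^{\B}$ carries simplicial monoids to monoids. Viewing a simplicial $\B$-space as a $\B$-diagram of bisimplicial sets, $|{-}|$ is obtained by applying the diagonal functor $\mathrm{diag}$ on bisimplicial sets in each level $\bld n$. Two standard facts then suffice: $\mathrm{diag}$ preserves finite products (it is restriction along the diagonal of the simplex category, and products of bisimplicial sets are formed bidegreewise), and $\mathrm{diag}$ preserves all colimits (being a left adjoint). Since colimits in $\cS^{\B}$ are computed levelwise and the $\boxtimes$-product is the levelwise colimit $(X\boxtimes Y)(\bld n)=\colim_{\bld n_1\sqcup\bld n_2\to\bld n}X(\bld n_1)\times Y(\bld n_2)$ of products in $\cS$, commuting $\mathrm{diag}$ past this colimit and past the product yields natural isomorphisms $|Z_\bullet|\boxtimes |W_\bullet|\to|Z_\bullet\boxtimes W_\bullet|$, and $|{-}|$ sends the constant simplicial object at $U^{\B}$ to $U^{\B}$. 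Hence $|{-}|$ is strong monoidal for $\boxtimes$, and therefore sends the monoid $\BBox_\bullet(A)$ to a monoid $\BBox(A)=|\BBox_\bullet(A)|$ in $\cS^{\B}$, with unit and multiplication obtained by applying $|{-}|$ to those of $\BBox_\bullet(A)$.

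The routine parts are the identification $\BBox_k(A)\cong A^{\boxtimes k}$ and the verification of the monoid axioms for $\BBox(A)$, which follow by applying $\mathrm{diag}$ to the corresponding commuting diagrams of bisimplicial sets. The one point deserving care — the main obstacle — is the monoidality of $|{-}|$ for $\boxtimes$: one must interchange the diagonal with the colimit defining the convolution product and with the product, which is legitimate precisely because $\mathrm{diag}$ preserves both and $\boxtimes$ is a levelwise colimit of products. No use of the commutativity of $A$ is needed here beyond what is already invoked to make $\BBox_\bullet(A)$ a simplicial monoid.
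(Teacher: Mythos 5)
Your proposal is correct and takes essentially the same route as the paper: the preceding discussion makes $\BBox_\bullet(A)$ a simplicial monoid, and the diagonal realization preserves monoids because it is strong monoidal for $\boxtimes$ (diag preserves finite products and all colimits, and $\boxtimes$ is a levelwise colimit of products). The paper leaves the monoidality of $|{-}|$ implicit; you have usefully made it explicit.
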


Recall that the natural transformation $\nu_{A,B}\co A\hB \times B\hB \ra (A\boxtimes B)\hB$ from Lemma~\ref{lemma X and Y flat implies weak equivalnece between times and boxtimes}
gives the homotopy colimit functor $(-)\hB\co \SB \ra\cS$ the structure of a 
lax monoidal functor. As usual when we have a lax monoidal functor, 
it follows that if $A$ is a $\B$-space monoid, then $A\hB$ inherits the structure of a monoid in 
$\cS$. If $M$ is a right $A$-module, then $M\hB$ inherits the structure of a right 
$A\hB$-module in $\cS$ and similarly for a left $A$-module $N$. We can then apply the two-sided 
simplicial bar construction in $\cS$ to $A\hB$, $M\hB$ and $N\hB$ and obtain 
$B_\bullet(M\hB, A\hB, N\hB)$. The natural transformation  $\nu$ gives rise 
to maps $B_k(M\hB, A\hB, N\hB)\rightarrow \BBox_k(M, A, N)\hB$ that commute with the 
simplicial structure maps. Hence we obtain a morphism 
$$B(M\hB, A\hB, N\hB)\rightarrow \BBox(M, A, N)\hB$$ 
in $\cS$. By specializing to the case where $M$ and $N$ is the unit $U^{\B}$ we can relate 
$\BBox(A)\hB$ to $B(A\hB)$ via $B(U^{\B}\hB, A\hB, U^{\B}\hB)$. 
The homotopy colimit of $U^{\B}$ is homeomorphic to $\nerve\B$ which is a contractible simplicial set. 
Hence the map 
$B(\nerve\B, A\hB, \nerve\B) \ra B(A\hB)$ induced by the
projection $\nerve\B\ra \ast$ is a weak equivalence.

\begin{proposition}\label{proposition bar construction and homotopy colimit}
 If $A$  is a $\B$-space monoid with underlying flat $\B$-space the above defined maps
$$\BBox(A)\hB \xleftarrow{\simeq} B(\nerve\B, A\hB, \nerve\B) \xrightarrow{\simeq} B(A\hB)$$
are weak equivalences.
\end{proposition}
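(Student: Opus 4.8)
The plan is to analyze the two maps separately. The right-hand map $B(\nerve\B,A\hB,\nerve\B)\to B(A\hB)$ is induced levelwise by the projection $\nerve\B\to *$ applied to the two ``outer'' factors, so in simplicial degree $k$ it is the map
\[
\nerve\B\times (A\hB)^{\times k}\times \nerve\B \ra (A\hB)^{\times k}.
\]
Since $\nerve\B$ is contractible (the category $\B$ has an initial object), each of these degreewise maps is a weak equivalence, and moreover a weak equivalence between ``good'' simplicial spaces in the appropriate sense because all the spaces involved are simplicial sets. Realizing via the diagonal then preserves this levelwise equivalence, so the right-hand map is a weak equivalence. This step is essentially formal and was already indicated in the text preceding the statement.

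The substance is the left-hand map $B(\nerve\B,A\hB,\nerve\B)\to \BBox(A)\hB$, which unravels to $B(U^{\B}\hB,A\hB,U^{\B}\hB)\to B(U^{\B},A,U^{\B})\hB$ after identifying $U^{\B}\hB$ with $\nerve\B$. This map is assembled degreewise from the natural transformation $\nu$: in simplicial degree $k$ it is a composite of instances of
\[
\nu\colon X\hB\times Y\hB \ra (X\boxtimes Y)\hB
\]
applied to iterated $\boxtimes$-factors built from $U^{\B}$ and copies of $A$. Here is where the flatness hypothesis on $A$ enters: by Lemma~\ref{lemma X and Y flat implies weak equivalnece between times and boxtimes}, $\nu_{X,Y}$ is a weak equivalence whenever $X$ and $Y$ are both flat. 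The unit $U^{\B}$ is flat (it is terminal and, being $\B(\bld 0,-)$, is a free $\B$-space so certainly cofibrant, hence flat), and one should check that $\boxtimes$-products of flat $\B$-spaces are flat — this follows from the pushout-product axiom (Lemma~\ref{lemma pushout-product axiom}), since $X\boxtimes Y$ is then cofibrant in the flat $\B$-model structure when $X$ and $Y$ are. Consequently every $\boxtimes$-factor appearing, namely $U^{\B}$, $A$, and the iterated $\boxtimes$-products $U^{\B}\boxtimes A^{\boxtimes j}\boxtimes U^{\B}$, is flat, so each application of $\nu$ is a weak equivalence, and hence the map $B(\nerve\B,A\hB,\nerve\B)\to \BBox(A)\hB$ is a levelwise weak equivalence of simplicial spaces.

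To conclude one must pass from a levelwise weak equivalence of simplicial objects in $\cS$ to a weak equivalence after taking the diagonal. Since all spaces in sight are simplicial sets, there are no pointset-topological ``properness'' issues, and realization of simplicial objects in $\cS$ (via the diagonal of the bisimplicial set) preserves levelwise weak equivalences; this is a standard fact. The main obstacle — and the only place any care is required — is verifying that the relevant $\boxtimes$-factors are flat so that Lemma~\ref{lemma X and Y flat implies weak equivalnece between times and boxtimes} applies, and identifying the degreewise map of $B(\nerve\B,A\hB,\nerve\B)\to\BBox(A)\hB$ precisely with a composite of $\nu$'s compatible with the simplicial structure maps, which is a matter of unwinding the definition of the lax monoidal structure on $(-)\hB$. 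Granting these, the proof is complete: we have two weak equivalences
\[
\BBox(A)\hB \xl{\simeq} B(\nerve\B,A\hB,\nerve\B)\xr{\simeq} B(A\hB),
\]
as claimed.
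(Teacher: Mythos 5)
Your proof is correct and follows essentially the same route as the paper: the right-hand map is a levelwise weak equivalence via contractibility of $\nerve\B$, and the left-hand map reduces (using that $U^{\B}$ is the unit) to $(A\hB)^{\times k}\to (A^{\boxtimes k})\hB$ being a weak equivalence in each simplicial degree by iterating Lemma~\ref{lemma X and Y flat implies weak equivalnece between times and boxtimes}, after which one realizes a degreewise weak equivalence of bisimplicial sets. The only difference is that you make explicit (via the pushout-product axiom) that $\boxtimes$-powers of flat $\B$-spaces remain flat, a point the paper's proof uses implicitly when citing the lemma inductively.
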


\begin{proof}
The argument for the right hand map being a weak equivalence is given before the proposition. 
The map  $(A\hB)^{\times k}\ra (A^{\boxtimes k})\hB$ is a weak equivalence for 
each $k\geq0$ since $A$ is flat, 
see Lemma~\ref{lemma X and Y flat implies weak equivalnece between times and boxtimes}.
It follows that the left hand map is the diagonal  of a map of bisimplicial sets which is a
weak equivalence at each simplicial degree of the bar construction. Therefore it is itself 
a weak equivalence. 
\end{proof}

Our goal is to use the bar construction in $\B$-spaces to give a double delooping of the group completion of $A\hB$ 
for a commutative $\B$-space monoid $A$ with underlying flat $\B$-space. In order to apply the previous proposition twice we will show that the bar construction on something flat is also flat.

\begin{lemma}\label{lemma bar construction flat}
 If $A$  is a $\B$-space monoid with underlying flat $\B$-space, then the underlying 
 $\B$-space of the bar construction 
 $\BBox(A)$ on $A$ is also flat. 
\end{lemma}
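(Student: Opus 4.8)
The plan is to verify directly the two conditions of Proposition~\ref{prop flat B-space} for the $\B$-space $\BBox(A)=|\BBox_\bullet(A)|$: that every morphism $\bld m\to\bld n$ in $\B$ induces a monomorphism of simplicial sets $\BBox(A)(\bld m)\to\BBox(A)(\bld n)$, and that for each diagram of the shape \eqref{diagram condition for flatness} the images of $\BBox(A)(\bld{l\sqcup m})$ and $\BBox(A)(\bld{m\sqcup n})$ in $\BBox(A)(\bld{l\sqcup m\sqcup n})$ intersect in the image of $\BBox(A)(\bld m)$. The key observation is that $\BBox(A)(\bld p)$ is the diagonal of the bisimplicial set $[k]\mapsto\BBox_k(A)(\bld p)$, so that a $q$-simplex of $\BBox(A)(\bld p)$ is exactly a $q$-simplex of the simplicial set $\BBox_q(A)(\bld p)$. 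Since being a monomorphism and being the intersection of two sub-simplicial-sets are both tested one simplicial degree at a time, each of the two conditions above, read off in degree $q$, reduces to the corresponding statement for the single $\B$-space $\BBox_q(A)$, taken in its own simplicial degree $q$.

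The first step is therefore to show that $\BBox_q(A)$ is a flat $\B$-space for every $q\geq 0$. Here $\BBox_q(A)=U^{\B}\boxtimes A^{\boxtimes q}\boxtimes U^{\B}$ is an iterated $\boxtimes$-product of the monoidal unit $U^{\B}$ and copies of $A$. The unit $U^{\B}$, being the constant $\B$-space at a point, is flat straight from Proposition~\ref{prop flat B-space}, and $A$ is flat by hypothesis, so it suffices to know that $X\boxtimes Y$ is flat whenever $X$ and $Y$ are. For this I would note that the pushout-product of the flat $\B$-cofibrations $\emptyset\to X$ and $\emptyset\to Y$ (with $\emptyset$ the initial $\B$-space) is the map $\emptyset\to X\boxtimes Y$, because $\boxtimes$ preserves colimits in each variable so that $\emptyset\boxtimes Y=X\boxtimes\emptyset=\emptyset$; since the flat $\B$-model structure is a monoidal model structure by Lemma~\ref{lemma pushout-product axiom}, this pushout-product is again a flat $\B$-cofibration, whence $X\boxtimes Y$ is flat.

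Granting that each $\BBox_q(A)$ is flat, Proposition~\ref{prop flat B-space} tells us that its structure maps are monomorphisms and that, in the diagram \eqref{diagram condition for flatness} built from $\BBox_q(A)$, the intersection of the two images is the image of $\BBox_q(A)(\bld m)$, as an identity of sub-simplicial-sets. Restricting each of these assertions to simplicial degree $q$ and letting $q$ run over all nonnegative integers yields exactly the two conditions of Proposition~\ref{prop flat B-space} for $\BBox(A)$; here one uses that passing to the diagonal of a bisimplicial set commutes with forming images of maps and with intersecting sub-objects. The step I expect to need the most care is precisely this degreewise bookkeeping --- being sure that nothing is lost when the ``bar'' simplicial direction and the internal simplicial direction are replaced by their common diagonal, so that the flatness conditions for $\BBox(A)$ genuinely follow from the simpler corresponding conditions for the pieces $\BBox_q(A)$. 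A more structural alternative I would keep in reserve is to show that the unit map $U^{\B}\to A$ is itself a flat $\B$-cofibration --- using that the latching spaces $L_{\bld n}(U^{\B})$ are points for $\bld n\in\B_+$ and empty for $\bld n=\bld 0$ --- so that the latching maps of the simplicial object $\BBox_\bullet(A)$ are iterated pushout-products of $U^{\B}\to A$, hence flat $\B$-cofibrations by Lemma~\ref{lemma pushout-product axiom}, making $\BBox_\bullet(A)$ Reedy cofibrant and its realization $\BBox(A)=|\BBox_\bullet(A)|$ flat.
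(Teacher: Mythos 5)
Your proposal is correct and follows essentially the same route as the paper: show each $\BBox_q(A)$ is flat (the paper likewise cites Lemma~\ref{lemma pushout-product axiom} for this, though you spell out the details about $U^{\B}$ and the pushout-product with $\emptyset$), then observe that the flatness criterion of Proposition~\ref{prop flat B-space} can be verified one simplicial degree at a time on the diagonal. Your Reedy-cofibrancy alternative is a clean structural variant that the paper does not pursue, but the argument you actually carry out matches the paper's.
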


\begin{proof}
When $A$ is flat, it follows from Lemma~\ref{lemma pushout-product axiom} that $\BBox_k(A)$ is flat for each $k\geq0$. 
The criterion for flatness given in Proposition~\ref{prop flat B-space} 
can be checked in each simplicial degree. Thus, $\BBox(A)$ is the diagonal of 
a bisimplicial object which is flat at each simplicial degree of the bar construction 
and is therefore flat.
\end{proof}

We use the well known fact that the group completion of a homotopy commutative simplicial monoid $M$ may be modelled by the canonical map $M\rightarrow \Omega (B (M)^{\mathrm{fib}})$, where the fibrant replacement 
$B(M)^{\mathrm{fib}}$ is the singular simplicial set of the geometric realization of $B(M)$. 
By a double delooping of a simplicial set $K$ we mean a based simplicial set $L$ such that $\Omega^2(L^{\mathrm{fib}})\simeq K$.

\begin{proposition}
If $A$ is a commutative $\B$-space monoid with underlying flat $\B$-space, 
then $\BBox(\BBox(A))\hB$ is a double delooping of the group completion of $A\hB$.
\end{proposition}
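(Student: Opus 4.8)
The plan is to apply Proposition~\ref{proposition bar construction and homotopy colimit} twice, once to $A$ and once to $\BBox(A)$, and then to recognize $\BBox(\BBox(A))\hB$ as a double classifying space via the group completion theorem. The first thing to check is that $\BBox(A)$ is again a $\B$-space monoid with underlying flat $\B$-space: since $A$ is commutative, $\BBox(A)$ is a monoid in $\cS^{\B}$, and its underlying $\B$-space is flat by Lemma~\ref{lemma bar construction flat} because that of $A$ is. Hence Proposition~\ref{proposition bar construction and homotopy colimit} applies both to $A$ and to $\BBox(A)$, yielding natural chains of weak equivalences
\[
\BBox(A)\hB\ \simeq\ B(A\hB)\qquad\text{and}\qquad \BBox(\BBox(A))\hB\ \simeq\ B(\BBox(A)\hB).
\]
Since $(-)\hB$ is lax monoidal via $\nu$, the space $\BBox(A)\hB$ is a genuine simplicial monoid, and it is connected because $B(A\hB)$ has a single $0$-simplex; in particular $\BBox(A)\hB$ is group-like.

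Next I would invoke the group completion theorem for the group-like simplicial monoid $\BBox(A)\hB$, which gives a weak equivalence $\BBox(A)\hB\xrightarrow{\ \sim\ }\Omega\bigl(B(\BBox(A)\hB)^{\mathrm{fib}}\bigr)$. Combined with the second displayed equivalence this reads $\Omega\bigl((\BBox(\BBox(A))\hB)^{\mathrm{fib}}\bigr)\simeq \BBox(A)\hB\simeq B(A\hB)$, and looping once more — using that $\Omega$ of a Kan complex is a Kan complex and that $\Omega$ preserves weak equivalences between Kan complexes — yields $\Omega^{2}\bigl((\BBox(\BBox(A))\hB)^{\mathrm{fib}}\bigr)\simeq \Omega\bigl(B(A\hB)^{\mathrm{fib}}\bigr)$. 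Since $A$ is commutative, its homotopy colimit $A\hB$ is homotopy commutative (being the homotopy colimit of a commutative monoid; cf.\ Section~\ref{sec:E2-spaces}), so by the fact recalled above the right-hand side is a model for the group completion of $A\hB$. This is precisely the assertion that $\BBox(\BBox(A))\hB$ is a double delooping of the group completion of $A\hB$.

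I expect the step requiring the most care to be the appeal to the group completion theorem: one must know that the simplicial monoid $\BBox(A)\hB$ — assembled from the lax monoidal structure on $(-)\hB$ applied to the degreewise monoid $\BBox(A)$ — is well enough behaved for the classifying-space recognition to apply, which in the simplicial setting is automatic since bar constructions of simplicial monoids are good, together with the bookkeeping that $A\hB$ genuinely is homotopy commutative. The latter holds because the braiding $\chi$ of $\B$ is natural, so that the natural transformation $\chi\colon\sqcup\Rightarrow\sqcup\circ(\mathrm{swap})$ makes $\nu$ into a lax braided monoidal structure, carrying the strict commutativity of $A$ to homotopy commutativity of $A\hB$; this is also consistent with the $E_2$ picture developed in Section~\ref{sec:E2-spaces}.
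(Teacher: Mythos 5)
Your proposal is correct and follows essentially the same argument as the paper: apply Proposition~\ref{proposition bar construction and homotopy colimit} to both $A$ and $\BBox(A)$ (using Lemma~\ref{lemma bar construction flat} for the latter), use that $\BBox(A)\hB$ is connected and hence group-like to identify it with $\Omega\bigl(B(\BBox(A)\hB)^{\mathrm{fib}}\bigr)$, and loop once more. The only material you add beyond the paper's proof is the explicit check that $A\hB$ is homotopy commutative, which the paper leaves implicit when invoking the group completion model for homotopy commutative simplicial monoids.
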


\begin{proof}
Letting $A$ equal $\BBox(A)$ in Proposition~\ref{proposition bar construction and homotopy colimit} 
and using 
 Lemma~\ref{lemma bar construction flat} we get 
 \begin{equation*}
\BBox(\BBox(A))\hB \simeq  B( \BBox(A)\hB).
\end{equation*}
Evaluating $\Omega((-)^{\mathrm{fib}})$ on this we get equivalences
\begin{equation*}
\Omega(\BBox(\BBox(A))\hB^{\mathrm{fib}}) \simeq \Omega( B( \BBox(A)\hB)^{\mathrm{fib}})
 \simeq  \BBox (A)\hB^{\mathrm{fib}} \simeq B(A\hB)^{\mathrm{fib}}
\end{equation*}
where the map in the middle is an equivalence  since $\BBox (A)\hB$ is connected and hence group-like. Looping once more we see that $\BBox(\BBox(A))\hB$ is indeed a double delooping of the group completion of $B(A\hB)$.
\end{proof}

Recall from Remark~\ref{rem:Phi-non-strict} that we can construct a commutative $\B$-space monoid $\nerve\Phi(\cA)$ for any  braided (not necessarily strict) monoidal small category. 
Next, we show that  $\nerve\Phi(\cA)$ has underlying flat $\B$-space so we can apply the above result to the double bar construction on $\nerve\Phi(\cA)$.

\begin{lemma}
Let $\cA$ is a  braided  monoidal small category. The commutative $\B$-space monoid $\nerve\Phi(\cA)$ has underlying flat $\B$-space. 
\end{lemma}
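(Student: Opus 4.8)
The plan is to verify the two conditions of the flatness criterion in Proposition~\ref{prop flat B-space} for the $\B$-space underlying $\nerve\Phi(\cA)$, working throughout with the extension of $\Phi$ to not-necessarily-strict $\cA$ described in Remark~\ref{rem:Phi-non-strict}. The single observation that drives everything is this: for an order preserving injection $\mu\co\bld m\to\bld n$, the functor $\Phi(\cA)(\Upsilon(\mu))\co\Phi(\cA)(\bld m)\to\Phi(\cA)(\bld n)$ is fully faithful and identifies $\Phi(\cA)(\bld m)$ with the full subcategory of $\Phi(\cA)(\bld n)$ spanned by those tuples $(\bld c_1,\dots,\bld c_n)$ with $\bld c_j=\bld u$ for all $j$ outside the image of $\mu$. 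On objects $\Phi(\cA)(\Upsilon(\mu))$ inserts the unit $\bld u$ at the positions outside the image of $\mu$, which is injective with precisely that image; on each hom-set it acts bijectively, being literally the identity on the corresponding hom-set of $\cA$ in the strict case (since inserting strict units into an order preserved tuple does not change the iterated product) and conjugation by the canonical coherence isomorphism comparing the iterated product of a tuple with that of the same tuple with units inserted in the general case.

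For the first condition of Proposition~\ref{prop flat B-space}, I would factor an arbitrary morphism $\alpha\co\bld m\to\bld n$ of $\B$ as $\alpha=\Upsilon(\mu)\circ\xi$ with $\mu$ order preserving injective and $\xi\in\cB_m$, using Lemma~\ref{lemma unique decomposition}. The braid $\xi$ is invertible in $\B$, so $\Phi(\cA)(\xi)$ is an isomorphism of categories, and $\Phi(\cA)(\Upsilon(\mu))$ is injective on objects and on morphisms by the observation above. Hence $\Phi(\cA)(\alpha)$ is injective on objects and morphisms, so its nerve is levelwise injective, hence a cofibration of simplicial sets.

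For the second condition, I would apply the same observation to the four order preserving inclusions in the square that places $\bld m$ in the middle block of $\bld{l\sqcup m\sqcup n}$: the image of $\Phi(\cA)(\bld{l\sqcup m})$ in $\Phi(\cA)(\bld{l\sqcup m\sqcup n})$ is the full subcategory $\cC_1$ of tuples whose last $n$ entries are $\bld u$, that of $\Phi(\cA)(\bld{m\sqcup n})$ is the full subcategory $\cC_2$ of tuples whose first $l$ entries are $\bld u$, and that of $\Phi(\cA)(\bld m)$ is $\cC_0=\cC_1\cap\cC_2$, the full subcategory of tuples of the form $(\bld u,\dots,\bld u,\bld a_1,\dots,\bld a_m,\bld u,\dots,\bld u)$. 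Since the nerve of a full subcategory $\cC'\subseteq\cC$ is the sub-simplicial set of $\nerve\cC$ spanned by the simplices all of whose vertices lie in $\cC'$, the images of $\nerve\Phi(\cA)(\bld{l\sqcup m})$ and $\nerve\Phi(\cA)(\bld{m\sqcup n})$ in $\nerve\Phi(\cA)(\bld{l\sqcup m\sqcup n})$ are $\nerve\cC_1$ and $\nerve\cC_2$, and their intersection is $\nerve(\cC_1\cap\cC_2)=\nerve\cC_0$, which is the image of $\nerve\Phi(\cA)(\bld m)$. This is exactly the required equality of images, so $\nerve\Phi(\cA)$ is flat.

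The only subtlety I anticipate is the bookkeeping for non-strict $\cA$: one must check that the coherence isomorphisms entering the definition of $\Phi(\cA)$ on morphisms only permute morphisms bijectively within fixed hom-sets of $\cA$, and therefore do not disturb the combinatorial identification of each of the relevant images with the full subcategory on an explicit set of tuples. Once this is pinned down, everything is formal given Proposition~\ref{prop flat B-space} and the elementary description of the nerve of a full subcategory.
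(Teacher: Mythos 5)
Your proof is correct and follows essentially the same approach as the paper: both verify the two conditions of Proposition~\ref{prop flat B-space} by observing injectivity of the induced functors for the first condition, and using the (strict-case) fact that inserting units is the identity on hom-sets for the second. The one place you are more careful than the paper is in making explicit that the images are \emph{full} subcategories, which is precisely what justifies the passage from the category-level intersection statement to the corresponding statement for nerves — the paper asserts "the same then holds for the $\B$-space $\nerve\Phi(\cA)$" without spelling out that fullness is what makes intersection commute with taking nerves.
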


\begin{proof}
 Here we prove the result for a braided strict monoidal small  category, the non-strict case is left to the reader. We use the criterion given in Proposition~\ref{prop flat B-space}.  For each braided injection $\bld m\ra \bld n$ the induced functor $\Phi(\cA)(\bld m) \ra \Phi(\cA)(\bld n)$ is injective on both objects and morphisms. Thus the nerve of that map is a cofibration of simplicial sets. 
The functor $\Phi(\cA)(\bld m)\ra \Phi(\cA)(\bld m\sqcup \bld n)$ induced by the inclusion of $\bld m$ in $\bld m\sqcup \bld n$ takes an object $(\bld a_1,\ldots, \bld a_m)$ to 
$(\bld a_1,\ldots, \bld a_m, U^{\B}, \ldots U^{\B})$. Since we have a strict monoidal structure it takes a morphism $f$ to the morphism $f\boxtimes \id_{U^{\B}\boxtimes \cdots \boxtimes U^{\B}}= f$. 
If we consider a diagram similar to \eqref{diagram condition for flatness} for the $\B$-category $\Phi(\cA)$ it is clear that the intersection of the images of $\Phi(\cA)(\bld{l\sqcup m})$ and $\Phi(\cA)(\bld{m\sqcup n})$ 
in $ \Phi(\cA)(\bld{l\sqcup m\sqcup n})$ equals the image of $\Phi(\cA)(\bld m)$. The same then holds for the $\B$-space $\nerve\Phi(\cA)$.
\end{proof}

\begin{corollary}
If $\cA$ is a  braided  monoidal small category, then 
$\BBox(\BBox(\mathrm{N}\Phi(\mathcal{A})))\hB$ is a double 
delooping of the group completion of $\nerve\cA$.
\end{corollary}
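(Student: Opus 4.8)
The plan is to apply the double-delooping proposition of this section to $A=\nerve\Phi(\cA)$ and then recognise the group completion that appears. By Remark~\ref{rem:Phi-non-strict} the $\B$-space monoid $\nerve\Phi(\cA)$ is commutative, and by the lemma just proved its underlying $\B$-space is flat, so the hypotheses of that proposition are met: $\BBox(\BBox(\nerve\Phi(\cA)))\hB$ is a double delooping of the group completion of $\nerve\Phi(\cA)\hB$. It therefore remains only to show that the group completion of $\nerve\Phi(\cA)\hB$ agrees, up to weak equivalence, with the group completion of $\nerve\cA$; and since $M\mapsto\Omega(B(M)^{\mathrm{fib}})$ sends a weak equivalence of homotopy commutative simplicial monoids to a weak equivalence, it is enough to produce a weak equivalence $\nerve\Phi(\cA)\hB\simeq\nerve\cA$ of homotopy commutative monoids.

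First I would treat the strict case. Since $\Phi(\cA)$ is a commutative, hence braided, $\B$-category monoid, it is an object of $\BrCatB$, and Proposition~\ref{prop:Thomason-diagram} identifies $\nerve(\B\!\int\!\Phi(\cA))$ with $\nerve\Phi(\cA)\hB$ as $\nerve\Br$-algebras; in particular this is a weak equivalence of homotopy commutative simplicial monoids. The proof of Proposition~\ref{prop:Phi-Bint-equivalence} moreover exhibits a braided strict monoidal functor $P\colon\B\!\int\!\Phi(\cA)\to\cA$ which is a weak equivalence, so $\nerve P$ is a weak equivalence of $\nerve\Br$-algebras $\nerve(\B\!\int\!\Phi(\cA))\xr{\simeq}\nerve\cA$. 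Composing gives the desired $\nerve\Phi(\cA)\hB\simeq\nerve\cA$. For general (non-strict) $\cA$ I would reduce to this case: by coherence for braided monoidal categories one may choose a braided strict monoidal small category $\cA'$ together with a braided strong monoidal equivalence $\cA'\to\cA$ strictly preserving units (cf.\ \cite{JS}); this induces a weak equivalence $\nerve\cA'\simeq\nerve\cA$ of homotopy commutative monoids, and, via the functoriality of $\Phi$ recorded in Remark~\ref{rem:Phi-non-strict}, a level-wise equivalence $\Phi(\cA')\to\Phi(\cA)$ of commutative $\B$-category monoids, hence a weak equivalence $\nerve\Phi(\cA')\hB\simeq\nerve\Phi(\cA)\hB$. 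Chaining these with the strict case applied to $\cA'$ yields $\nerve\Phi(\cA)\hB\simeq\nerve\cA$. Alternatively, one checks directly that the proof of Proposition~\ref{prop:Phi-Bint-equivalence} goes through for non-strict $\cA$, its only extra input being the evident non-strict analogue of Lemma~\ref{lem:Phi-positive-equivalences}.

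The main obstacle is bookkeeping rather than anything conceptual: one must make sure that each comparison above is a map of homotopy commutative simplicial monoids (equivalently, of $H$-spaces), not merely of underlying simplicial sets, so that the phrase ``group completion'' makes sense and is preserved throughout. The two facts that make this work are that $\nu$ from Lemma~\ref{lemma X and Y flat implies weak equivalnece between times and boxtimes} promotes $\nerve\Phi(\cA)\hB$ to a homotopy commutative monoid because $\nerve\Phi(\cA)$ is a commutative monoid in the braided monoidal category $\cS^{\B}$, and that $\nerve\cA$ carries its standard homotopy commutative monoid structure coming from $\otimes$ together with the braiding of $\cA$. Granting these, the corollary follows immediately, since a double delooping of $K$ is a fortiori a double delooping of any $K'\simeq K$.
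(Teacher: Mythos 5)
Your proof is correct and follows the same route as the paper: apply the preceding proposition to $A=\nerve\Phi(\cA)$ (using the flatness lemma and commutativity from Remark~\ref{rem:Phi-non-strict}), then identify $\nerve\Phi(\cA)\hB$ with $\nerve\cA$ via Propositions~\ref{prop:Thomason-diagram} and~\ref{prop:Phi-Bint-equivalence}. The one place where you go beyond what the paper records is worth noting: Proposition~\ref{prop:Phi-Bint-equivalence} (and the functor $P\colon\B\!\int\!\Phi(\cA)\to\cA$ constructed in its proof) is stated only for braided \emph{strict} monoidal $\cA$, while the corollary is asserted for general braided monoidal $\cA$. The paper's two-sentence proof elides this; you fill it in, either by strictifying $\cA$ and transporting along the induced level-wise equivalence $\Phi(\cA')\to\Phi(\cA)$, or by observing that the construction of $P$ and Lemma~\ref{lem:Phi-positive-equivalences} carry over to the non-strict setting given the definition of $\Phi$ in Remark~\ref{rem:Phi-non-strict}. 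Your explicit remark that all comparisons should be maps of homotopy commutative monoids (so that ``group completion'' is invariant under them) is also a point the paper leaves implicit.
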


\begin{proof}
The underlying $\B$-space of $\nerve\Phi(\cA)$ is flat, so we can apply the proposition and get that $\BBox(\BBox(\mathrm{N}\Phi(\mathcal{A})))\hB$ is a double 
delooping of the group completion of $\nerve\Phi(\cA)\hB$. But by combining Propositions~\ref{prop:Thomason-diagram} and \ref{prop:Phi-Bint-equivalence}, the latter is weakly equivalent to $\nerve\cA$. 
\end{proof}

\section{$\mathcal{I}$-categories and $E_{\infty}$ spaces}\label{sec:I-section}
In this section we focus on diagrams indexed by the category $\cI$ and we record the constructions and results analogous to those worked out for diagrams indexed by the category $\B$ in the previous sections.
The proofs are completely analogous to those in the braided case (if not simpler) and will be omitted throughout. We then relate this material to the category of symmetric spectra. 

Let $\Cat^{\cI}$ denote the category of $\cI$-categories with the symmetric monoidal convolution product inherited from $\cI$. The Grothendieck construction defines a functor $\cI\!\int\colon \Cat^{\cI}\to \Cat$ and a map of $\cI$-categories $X\to Y$ is said to be an 
\emph{$\cI$-equivalence} if the induced functor $\cI\!\int\! X\to\cI\!\int\! Y$ is a weak equivalence. We write $\Sym$ for the symmetric monoidal analogue of the $\Cat$-operad $\Br$. Thus, the category $\Sym(k)$ has as its objects the elements $a$ in $\Sigma_k$ and a morphism 
$\alpha\colon a\to b$ is an element $\alpha\in \Sigma_k$ such that $\alpha a=b$. It is proved in \cite{may2} that a $\Sym$-algebra in $\Cat$ is the same thing as a permutative (i.e., symmetric strict monoidal) category and that the nerve $\nerve\Sym$ can be identified with the Barratt-Eccles operad. The latter is an $E_{\infty}$ operad in the sense that $\nerve\Sym(k)$ is $\Sigma_k$-free and contractible for all $k$. As in Proposition~\ref{prop:Bint-Delta--braided-equivalence} one checks that there is an equivalence of localized categories 
\begin{equation*}
\textstyle\cI\!\int \colon \text{$\Sym$-$\Cat^{\cI}$}[w_{\cI}^{-1}] \simeq \text{$\Sym$-$\Cat$}[w^{-1}] :\!\Delta.
\end{equation*}
The rectification functor $\Phi$ from Section~\ref{subsec:rectification} also has a symmetric monoidal version, now in the form of a functor $\Phi\colon \text{$\Sym$-$\Cat^{\cI}$}\to \cC(\Cat^{\cI})$ where the codomain is the category of commutative $\cI$-category monoids. The composite functor 
\begin{equation}\label{eq:I-rectification-equivalence}
\text{$\Sym$-$\Cat$}\xr{\Phi} \cC(\Cat^{\cI}) \xr{} \text{$\Sym$-$\Cat^{\cI}$}\xr{\cI\!\int} \text{$\Sym$-$\Cat$}
\end{equation} 
is weakly equivalent to the identity functor and arguing as in the proof of Theorem~\ref{thm:Braided-B-Cat-rectification} we get the following result.
\begin{theorem}
Every $\Sym$-algebra in $\Cat^{\cI}$ is related to a strictly commutative $\cI$-category monoid by a chain of $\cI$-equivalences in 
\text{$\Sym$-$\Cat^{\cI}$}.\qed
\end{theorem}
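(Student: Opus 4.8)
The plan is to transcribe the proof of Theorem~\ref{thm:Braided-B-Cat-rectification} essentially word for word, replacing $\B$ by $\cI$ and $\Br$ by $\Sym$ throughout. Given a $\Sym$-algebra $A$ in $\Cat^{\cI}$, the category $\cI\!\int\!A$ is a permutative category (the $\cI$-analogue of Proposition~\ref{prop:Grothendieck-promotion}), so the target of the chain will be the strictly commutative $\cI$-category monoid $\Phi(\cI\!\int\!A)$, which by construction lies in $\cC(\Cat^{\cI})$ and hence is an object of $\text{$\Sym$-$\Cat^{\cI}$}$: a commutative $\cI$-category monoid carries a canonical $\Sym$-action because, unlike in the braided setting, the symmetric groups act on the iterated $\boxtimes$-products in $\Cat^{\cI}$. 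The chain will read
\[
\textstyle A\simeq \Delta(\cI\!\int\! A)\simeq \Delta\big(\cI\!\int\!\Phi(\cI\!\int\!A)\big)\simeq \Phi(\cI\!\int\!A),
\]
with all arrows understood in $\text{$\Sym$-$\Cat^{\cI}$}$.

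First I would produce the two outer equivalences. The proof of the $\cI$-analogue of Proposition~\ref{prop:Bint-Delta--braided-equivalence} supplies, for every $\Sym$-algebra $B$ in $\Cat^{\cI}$, a bar resolution $\overline B$ that is again a $\Sym$-algebra (the $\cI$-analogue of Lemma~\ref{lem:B-down-bullet-braided}), together with natural $\cI$-equivalences $\ev\colon \overline B\to B$ and $\pi\colon \overline B\to \Delta(\cI\!\int\!B)$ of $\Sym$-algebras (the $\cI$-analogues of Lemmas~\ref{lem:ev-equivalence} and~\ref{lem:pi-equivalence}). Taking $B=A$ gives the first equivalence $A\simeq \Delta(\cI\!\int\!A)$, and taking $B=\Phi(\cI\!\int\!A)$ gives the last one $\Delta\big(\cI\!\int\!\Phi(\cI\!\int\!A)\big)\simeq \Phi(\cI\!\int\!A)$.

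For the middle equivalence I would apply the constant embedding $\Delta$ to the natural weak equivalence $\cI\!\int\!\Phi(-)\xrightarrow{\simeq}(-)$ of permutative categories witnessing that the composite~\eqref{eq:I-rectification-equivalence} is weakly equivalent to the identity on $\text{$\Sym$-$\Cat$}$ (the $\cI$-analogue of Proposition~\ref{prop:Phi-Bint-equivalence}), evaluated at $\cI\!\int\!A$. Since a level-wise weak equivalence between constant $\cI$-categories is automatically an $\cI$-equivalence, $\Delta$ carries this to an $\cI$-equivalence $\Delta\big(\cI\!\int\!\Phi(\cI\!\int\!A)\big)\xrightarrow{\simeq}\Delta(\cI\!\int\!A)$ in $\text{$\Sym$-$\Cat^{\cI}$}$. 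Concatenating the three chains, and recalling that $\Phi(\cI\!\int\!A)$ is strictly commutative, then proves the theorem.

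The only point requiring care — and the main, rather mild, obstacle — is the operadic bookkeeping: one must check that the bar resolution functor, the maps $\ev$ and $\pi$, the comparison functor $P\colon \cI\!\int\!\Phi(\cA)\to\cA$ underlying the $\cI$-version of Proposition~\ref{prop:Phi-Bint-equivalence}, and Thomason's equivalence $\eta$ are all compatible with the $\Sym$-actions, i.e.\ with the level-wise structure maps $\theta_k$. Each of these verifications is the literal symmetric-monoidal transcription of one already performed in the braided case (in Lemma~\ref{lem:B-down-bullet-braided} and in the proofs of Propositions~\ref{prop:Bint-Delta--braided-equivalence}, \ref{prop:Phi-Bint-equivalence} and~\ref{prop:Thomason-diagram}), and is if anything simpler here since one works with honest group actions rather than braid-group actions; it may therefore be omitted, as the opening remarks of Section~\ref{sec:I-section} already stipulate.
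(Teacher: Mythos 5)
Your proof is correct and takes essentially the same approach as the paper: the paper explicitly omits the argument, noting that the proofs in Section~\ref{sec:I-section} are ``completely analogous to those in the braided case,'' and your transcription of the chain $A\simeq \Delta(\cI\!\int\! A)\simeq \Delta\big(\cI\!\int\!\Phi(\cI\!\int\!A)\big)\simeq \Phi(\cI\!\int\!A)$ from the proof of Theorem~\ref{thm:Braided-B-Cat-rectification}, together with the $\cI$-analogues of Lemma~\ref{lem:B-down-bullet-braided} and Propositions~\ref{prop:Bint-Delta--braided-equivalence} and~\ref{prop:Phi-Bint-equivalence}, is exactly what is intended. Your remark that the operadic bookkeeping is simpler here because $\Sym$ is an honest $\Cat$-operad (symmetric groups acting on iterated $\boxtimes$-products) rather than a braided operad is also accurate and consistent with the paper's framing.
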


In particular, every symmetric monoidal category is weakly equivalent to one of the form $\cI\!\int\! A$ for $A$ a strictly commutative $\cI$-category monoid. Now let $\cS^{\cI}$ be the category of $\cI$-spaces equipped with the symmetric monoidal convolution product inherited from $\cI$. A map of $\cI$-spaces $X\to Y$ is an \emph{$\cI$-equivalence} if the induced map of homotopy colimits $X_{h\cI}\to Y_{h\cI}$ is a weak equivalence and the $\cI$-space version of Proposition~\ref{prop:hocolim-B-equivalence} gives an equivalence of the localized categories
\begin{equation}\label{NSym-I-equivalence}
(-)_{h\cI}\colon\text{$\nerve\Sym$-$\cS^{\cI}$}[w_{\cI}^{-1}]\simeq \text{$\nerve\Sym$-$\cS$}[w^{-1}] :\!\Delta.
\end{equation}  
Furthermore, one checks that the $\cI$-category version of Thomason's equivalence \eqref{eq:Thomason-equivalence} gives a natural weak equivalence relating the two compositions in the diagram
\begin{equation}\label{eq:I-Thomason-commutativity}
\xymatrix{
\text{$\Sym$-$\Cat^{\cI}$} \ar[r]^{\nerve} \ar[d]_{\cI\!\int} &\text{$\nerve\Sym$-$\cS^{\cI}$} \ar[d]^{(-)_{h\cI}}\\
\text{$\Sym$-$\Cat$} \ar[r]^{\nerve}  &\text{$\nerve\Sym$-$\cS$}.
}
\end{equation}
Arguing as in the proof of Theorem~\ref{thm:Br-SB-rectification} one can use this to show that every $\nerve\Sym$-algebra in $\cS^{\cI}$ is $\cI$-equivalent to one that is strictly commutative. However, a stronger form of this statement has been proved in \cite{Sagave-Schlichtkrull}: There is a model structure on 
$\text{$\nerve\Sym$-$\cS^{\cI}$}$ such that the equivalence \eqref{NSym-I-equivalence} can be derived from a Quillen equivalence, and a further model structure on 
$\cC(\cS^{\cI})$ (the category of commutative $\cI$-space monoids) making the latter Quillen equivalent to $\text{$\nerve\Sym$-$\cS^{\cI}$}$.

\subsection{Symmetric spectra and $E_{\infty}$ spaces} Let $\Sp^{\Sigma}$ be the category of symmetric spectra as defined in \cite{HSS}. The smash product of symmetric spectra makes this a symmetric monoidal category with monoidal unit the sphere spectrum. Given an (unbased) space $X$ we write $\Sigma^{\infty}(X_+)$ for the suspension spectrum with $n$th space $X_+\wedge S^n$ where $X_+$ denotes the union of $X$ with a disjoint base point. If $X$ is an $E_{\infty}$ space (i.e., an algebra for an $E_{\infty}$ operad in $\cS$), then $\Sigma^{\infty}(X_+)$ is an $E_{\infty}$ symmetric ring spectrum for the same operad. It is proved in \cite{Elmendorf-M_infinite-loop} that in general an $E_{\infty}$ symmetric ring spectrum is stably equivalent to a strictly commutative symmetric ring spectrum. However, the proof of this fact is not very constructive and it is of interest to find more memorable 
commutative models of the $E_{\infty}$ ring spectra in common use. Here we shall do this for $E_{\infty}$ symmetric ring spectra of the form $\Sigma^{\infty}(\nerve \cA_+)$ for a permutative category $\cA$. The relevant operad is the Barratt-Eccles operad $\nerve\Sym$ as explained above. In order to make use of the rectification functor $\Phi$ we recall from \cite[Section~3]{Sagave-Schlichtkrull} that the suspension spectrum functor extends to a strong symmetric monoidal functor $\mathbb S^{\cI}\colon \cS^{\cI}\to \Sp^{\Sigma}$ taking an $\cI$-space $X$ to the symmetric spectrum $\mathbb S^{\cI}[X]$ with $n$th space $X(\bld n)_+\wedge S^n$. Given a permutative category $\cA$ we may apply this functor to the commutative $\cI$-space monoid $\nerve\Phi(\cA)$ to get the commutative symmetric ring spectrum $\mathbb S^{\cI}[\nerve\Phi(\cA)]$.

\begin{proposition}\label{prop:S[NA]-model}
Given a permutative category $\cA$, the commutative symmetric ring spectrum 
$\mathbb S^{\cI}[\nerve\Phi(\cA)]$ is related to 
$\Sigma^{\infty}(\nerve \cA_+)$  by a chain of natural stable equivalences of $E_{\infty}$ symmetric ring spectra.
\end{proposition}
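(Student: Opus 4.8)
The plan is to pass through homotopy colimits, first comparing $\mathbb S^{\cI}[\nerve\Phi(\cA)]$ with the suspension spectrum of $\nerve\Phi(\cA)_{h\cI}$ and then identifying this homotopy colimit with $\nerve\cA$ as an $E_{\infty}$ space. Two preliminary remarks set this up. First, $\nerve\Phi(\cA)$ is a commutative $\cI$-space monoid, by the symmetric monoidal version of the rectification functor recalled in this section. Second, its underlying $\cI$-space is flat: this is the $\cI$-analogue of the flatness statement proved for $\B$-spaces in Section~\ref{sec:classifying}, and the proof is identical since the functors $\Phi(\cA)(\bld m)\to\Phi(\cA)(\bld n)$ are injective on objects and morphisms and the intersection condition of Proposition~\ref{prop flat B-space} holds verbatim for the $\cI$-category $\Phi(\cA)$.

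The first ingredient is a multiplicative comparison on the level of spectra, to the effect that for a flat $\cI$-space $X$ there is a natural chain of stable equivalences $\mathbb S^{\cI}[X]\simeq\Sigma^{\infty}((X_{h\cI})_+)$, and that this chain is one of $\nerve\Sym$-symmetric ring spectra when $X$ is a commutative $\cI$-space monoid, where the strictly commutative spectra in sight are regarded as $\nerve\Sym$-algebras via the augmentation of the Barratt-Eccles operad onto the commutativity operad. This rests on the analysis of $\mathbb S^{\cI}[-]\co\cS^{\cI}\to\Sp^{\Sigma}$ in \cite[Section~3]{Sagave-Schlichtkrull}: this functor is strong symmetric monoidal, satisfies $\Sigma^{\infty}_+ = \mathbb S^{\cI}\circ\Delta$, is left Quillen with respect to the flat $\cI$-model structure, and interacts well with (commutative) monoids. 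One then interpolates between $X$ and $\Delta(X_{h\cI})$ by a flat resolution such as the bar resolution $\overline X$ (which promotes to an endofunctor of $\nerve\Sym$-$\cS^{\cI}$ and has flat underlying $\cI$-space when $X$ does), using the $\cI$-equivalences $\ev\co\overline X\to X$ and $\pi\co\overline X\to\Delta(X_{h\cI})$; since $\mathbb S^{\cI}[-]$ is left Quillen for the flat structure it sends these $\cI$-equivalences of flat $\cI$-spaces to stable equivalences, and being strong symmetric monoidal it sends $\nerve\Sym$-algebras to $\nerve\Sym$-symmetric ring spectra. Applying this to $X=\nerve\Phi(\cA)$ gives the first link
\[
\mathbb S^{\cI}[\nerve\Phi(\cA)]\ \simeq\ \Sigma^{\infty}\big((\nerve\Phi(\cA)_{h\cI})_+\big).
\]

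The second ingredient is the identification $\nerve\Phi(\cA)_{h\cI}\simeq\nerve\cA$. By the $\cI$-version of Thomason's equivalence, encoded in the commuting square \eqref{eq:I-Thomason-commutativity}, there is a natural weak equivalence $\nerve\Phi(\cA)_{h\cI}\simeq\nerve(\cI\!\int\Phi(\cA))$ of $\nerve\Sym$-algebras in $\cS$; and by the $\cI$-analogue of Proposition~\ref{prop:Phi-Bint-equivalence} underlying the equivalence \eqref{eq:I-rectification-equivalence} there is a natural weak equivalence $\cI\!\int\Phi(\cA)\xrightarrow{\ \sim\ }\cA$ of $\Sym$-categories, hence $\nerve(\cI\!\int\Phi(\cA))\xrightarrow{\ \sim\ }\nerve\cA$ of $\nerve\Sym$-algebras. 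Since $\Sigma^{\infty}_+$ is strong symmetric monoidal it carries $\nerve\Sym$-algebras in $\cS$ to $\nerve\Sym$-symmetric ring spectra and, every simplicial set being cofibrant, preserves weak equivalences; applying it to the above chain gives a chain of stable equivalences of $\nerve\Sym$-symmetric ring spectra
\[
\Sigma^{\infty}\big((\nerve\Phi(\cA)_{h\cI})_+\big)\ \simeq\ \Sigma^{\infty}\big(\nerve(\cI\!\int\Phi(\cA))_+\big)\ \simeq\ \Sigma^{\infty}(\nerve\cA_+).
\]
Splicing the two chains and recalling that $\mathbb S^{\cI}[\nerve\Phi(\cA)]$ is strictly commutative (regarded as a $\nerve\Sym$-algebra via the same augmentation) proves the proposition, all of the maps being natural in $\cA$. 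The step I expect to be the main obstacle is the multiplicativity in the first ingredient --- realizing the zig-zag $\mathbb S^{\cI}[X]\simeq\Sigma^{\infty}((X_{h\cI})_+)$ through $\nerve\Sym$-algebras rather than merely on underlying spectra; this is exactly where the flatness of $\nerve\Phi(\cA)$ and the compatibility of $\mathbb S^{\cI}[-]$ with (commutative) monoids established in \cite{Sagave-Schlichtkrull} are essential, the remaining naturality and compatibility verifications being routine.
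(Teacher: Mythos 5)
Your proposal is correct and follows essentially the same route as the paper's proof: both use the bar resolution to relate $\nerve\Phi(\cA)$ to $\Delta(\nerve\Phi(\cA)_{h\cI})$ by $\cI$-equivalences in $\nerve\Sym$-$\cS^{\cI}$, combine Thomason's equivalence \eqref{eq:I-Thomason-commutativity} with the rectification equivalence \eqref{eq:I-rectification-equivalence} to identify $\nerve\Phi(\cA)_{h\cI}$ with $\nerve\cA$ as $\nerve\Sym$-algebras, and then apply the strong symmetric monoidal functor $\mathbb S^{\cI}[-]$. The only noteworthy difference is cosmetic: you route the final step through flatness of $\nerve\Phi(\cA)$, $\overline{\nerve\Phi(\cA)}$, and $\Delta(\nerve\Phi(\cA)_{h\cI})$ together with the left Quillen property of $\mathbb S^{\cI}[-]$ for the flat model structure, whereas the paper simply invokes the stronger (and available, from \cite{Sagave-Schlichtkrull}) fact that $\mathbb S^{\cI}[-]$ carries \emph{all} $\cI$-equivalences to stable equivalences; your version therefore spends extra effort on flatness that the paper's argument does not need, but the conclusion and all intermediate objects coincide.
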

\begin{proof}
Composing the natural weak equivalence relating the composite functor \eqref{eq:I-rectification-equivalence} to the identity functor with Thomason's weak equivalence relating the two compositions in \eqref{eq:I-Thomason-commutativity}, we get a weak equivalence of $\nerve\Sym$-algebras 
\[
\textstyle \nerve\cA \xl{\simeq} \nerve(\cI\!\int\!\Phi(\cA)) \xl{\simeq} (\nerve\Phi(\cA))_{h\cI}.
\] 
Furthermore, using the bar resolution $\overline{(-)}$ as in Section~\ref{subsec:E2-rectification} we get a chain of $\cI$-equivalences
\[
\Delta(\nerve\Phi(\cA)_{h\cI}) \xl{\simeq} \overline{\nerve\Phi(\cA)}\xr{\simeq} \nerve\Phi(\cA)
\]
in $\text{$\nerve\Sym$-$\cS^{\cI}$}$. This gives the result since the functor $\mathbb S^{\cI}[-]$ 
takes $\cI$-equivalences to stable equivalences.
\end{proof}

We also note that the symmetric spectrum $\mathbb S^{\cI}[\nerve\Phi(\cA)]$ has several of the pleasant properties discussed in \cite[Section~5]{HSS}: The fact that it is $S$-cofibrant (this is what some authors call flat) ensures that it is homotopically well-behaved with respect to the smash product, and the fact that it is semistable ensures that its spectrum homotopy groups can be identified with the stable homotopy groups of $\nerve \cA$.

\begin{example}
The underlying infinite loop space $Q(S^0)$ of the sphere spectrum plays a fundamental role in stable homotopy theory. In order to realize the $E_{\infty}$ ring spectrum $\Sigma^{\infty}(Q(S^0)_+)$ as a commutative symmetric ring spectrum, we use that $Q(S^0)$ is weakly equivalent to the classifying space of Quillen's localization construction $\Sigma^{-1}\Sigma$, where as usual $\Sigma$ denotes the category of finite sets and bijections. (We refer to \cite{Grayson-higher} for a general discussion of Quillen's localization construction and to \cite{Sagave-Schlichtkrull} for an explicit description of $\Sigma^{-1}\Sigma$.) The category $\Sigma^{-1}\Sigma$ inherits a permutative structure from $\Sigma$ and it follows from Proposition~\ref{prop:S[NA]-model} that the commutative symmetric ring spectrum $\mathbb S^{\cI}[\nerve\Phi(\Sigma^{-1}\Sigma)]$ is a model of $\Sigma^{\infty}(Q(S^0)_+)$.  
\end{example}

\appendix
\section{Localization of categories}\label{app:localization}

We make some elementary remarks on localization of categories. Let $\cC$ be a (not necessarily small) category and let $\cV$ be a class of morphisms in $\cC$. Recall that a \emph{localization of $\cC$ with respect to $\cV$} is a category $\cC'$ together with a functor $L\colon C\to \cC'$ that maps the morphisms in $\cV$ to isomorphisms in $\cC'$ and is initial with this property: Given a category $\cD$ and a functor $F\colon \cC\to \cD$ that maps the morphisms in $\cV$ to isomorphisms in $\cD$, there exists a unique functor $F'\colon \cC'\to \cD$ such that $F=F'\circ L$. Clearly a localization of $\cC$ with respect to $\cV$ is uniquely determined up to isomorphism if it exists. We sometimes use the notation $\cC\to \cC[\cV^{-1}]$ for such a localization. It will be convenient to assume that $\cC$ and $\cC[\cV^{-1}]$ always have the same objects. 

Let again $\cC$ be a category equipped with a class of morphisms $\cV$ and consider a category $\cA$ together with a pair of functors $F,G\colon \cA\to \cC$. In this situation we say that $F$ and $G$ are \emph{related by a chain of natural transformations in $\cV$}, written $F\simeq_{\cV}G$, if there exists a finite sequence of functors $H_1,\dots, H_n$ from $\cA$ to $\cC$ with $H_1=F$ and $H_n=G$, and for each $1\leq i<n$ either a natural transformation $H_i\to H_{i+1}$ with values in $\cV$ or a natural transformation $H_{i+1}\to H_i$ with values in $\cV$. In the next proposition we write $I_{\cC}$ for the identity functor on $\cC$.

\begin{proposition}\label{prop:localization-equivalence}
Let $\cC$ and $\cD$ be categories related by the functors $F\colon \cC\to \cD$ and $G\colon \cD\to \cC$.
Suppose that $\cV$ is a class of morphisms in $\cC$ and that $\cW$ is a class of morphisms in $\cD$ such that $F(\cV)\subseteq \cW$, $G(\cW)\subseteq \cV$, $G\circ F\simeq_{\cV} I_{\cC}$, and $F\circ G\simeq_{\cW} I_{\cD}$. Then the localization of $\cC$ with respect to $\cV$ exists if and only if the localization of $\cD$ with respect to $\cW$ exists, and in this case $F$ and $G$ induce an equivalence of categories $F\colon \cC[\cV^{-1}]\rightleftarrows \cD[\cW^{-1}] :\!G$.
\end{proposition}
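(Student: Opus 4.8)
The plan is to reduce everything to the explicit Gabriel--Zisman construction of a localization, which exists for any category and any class of morphisms once one allows large hom-classes. Recall that for a category $\cC$ and a class $\cV$ this produces a category $\cC'$ with the same objects as $\cC$ --- morphisms are equivalence classes of zig-zags with backwards arrows in $\cV$, modulo the evident relations --- together with an identity-on-objects functor $L_\cC\colon\cC\to\cC'$ sending $\cV$ to isomorphisms and having the strict universal property of a localization; the only possible defect is that $\cC'$ need not be locally small, and $\cC[\cV^{-1}]$ exists (as a genuine category) exactly when $\cC'$ is, in which case $\cC'=\cC[\cV^{-1}]$. So I would first observe that it suffices to produce functors $\bar F\colon\cC'\rightleftarrows\cD'\colon\bar G$ forming an equivalence of (possibly large) categories: an equivalence is fully faithful, so $\cC'(c,c')\cong\cD'(\bar Fc,\bar Fc')$, whence $\cC'$ is locally small iff $\cD'$ is, and when both are, $\bar F,\bar G$ restrict to the asserted equivalence $\cC[\cV^{-1}]\simeq\cD[\cW^{-1}]$.

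Two standard facts carry the argument. The first is a \emph{transport lemma}: if $\Phi\colon\cC\to\cE$ sends $\cV$ to isomorphisms and $H\simeq_{\cV}H'$ for functors $H,H'\colon\cA\to\cC$, then $\Phi\circ H\cong\Phi\circ H'$ naturally --- running along the defining chain, each natural transformation with values in $\cV$ becomes, after applying $\Phi$, one with values in isomorphisms, i.e.\ a natural isomorphism, and one composes these, inverting where the chain runs backwards. The second is a \emph{generation lemma}: every morphism of $\cC'$ is a finite composite of morphisms $L_\cC(f)$ (for $f$ a morphism of $\cC$) and $L_\cC(v)^{-1}$ (for $v\in\cV$); this is immediate from the zig-zag description (alternatively, the wide subcategory of $\cC'$ generated by these morphisms still has the universal property, hence is all of $\cC'$).

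Next I would build $\bar F$ and $\bar G$. Since $F(\cV)\subseteq\cW$, the composite $L_\cD\circ F\colon\cC\to\cD'$ sends $\cV$ to isomorphisms, so the universal property of $L_\cC$ gives a unique $\bar F\colon\cC'\to\cD'$ with $\bar F\circ L_\cC=L_\cD\circ F$; symmetrically $G(\cW)\subseteq\cV$ yields a unique $\bar G\colon\cD'\to\cC'$ with $\bar G\circ L_\cD=L_\cC\circ G$. Then
\[
\bar G\circ\bar F\circ L_\cC=\bar G\circ L_\cD\circ F=L_\cC\circ(G\circ F),
\]
and since $G\circ F\simeq_{\cV}I_\cC$ the transport lemma (applied with $\Phi=L_\cC$) provides a natural isomorphism $\theta$ from $\bar G\circ\bar F\circ L_\cC$ to $L_\cC=I_{\cC'}\circ L_\cC$. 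Its components $\theta_c\colon\bar G\bar F(c)\to c$ are morphisms of $\cC'$, natural with respect to $L_\cC(f)$ for every morphism $f$ of $\cC$; the morphisms of $\cC'$ with respect to which $\theta$ is natural form a wide subcategory closed under the inverses of those of its members that are isomorphisms (a short diagram chase using that functors preserve inverses), so by the generation lemma this is all of $\cC'$ and $\theta\colon\bar G\circ\bar F\Rightarrow I_{\cC'}$ is a natural isomorphism. Symmetrically, $F\circ G\simeq_{\cW}I_\cD$ gives $\bar F\circ\bar G\cong I_{\cD'}$, so $\bar F$ and $\bar G$ form an equivalence; together with the first paragraph this finishes the proof.

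The hardest point is the passage from $\bar G\bar F\circ L_\cC\cong L_\cC$ (a statement about functors out of $\cC$) to $\bar G\bar F\cong I_{\cC'}$ (a statement about endofunctors of $\cC'$): this is precisely where the generation lemma enters, and where it matters that the hypotheses supply only \emph{chains} of natural transformations in $\cV$ rather than honest natural isomorphisms. The only other thing needing attention is the size bookkeeping in the reduction step, which is routine once one fixes the Gabriel--Zisman model as the universal ``large'' localization.
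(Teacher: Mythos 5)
Your argument is correct, but it is organized quite differently from the paper's. You pass through the Gabriel--Zisman (``large'') localizations of both $\cC$ and $\cD$, induce $\bar F$ and $\bar G$ by the universal property, prove $\bar G\circ\bar F\cong I_{\cC'}$ and $\bar F\circ\bar G\cong I_{\cD'}$ via your transport and generation lemmas, and only at the end transfer local smallness across the resulting fully faithful functors. The paper never invokes the free localization: it assumes only that $L\colon\cD\to\cD'$ exists and \emph{manufactures} the localization of $\cC$ by declaring $\cC'(C_1,C_2)=\cD'(LF(C_1),LF(C_2))$, then checks the universal property of $L^{\cC}=LF$ by hand --- existence of the factorization $H'$ of a given $H$ comes from factoring $H\circ G$ through $\cD'$ and conjugating by the isomorphism $HGF\cong H$ supplied by $G\circ F\simeq_{\cV}I_{\cC}$, and uniqueness comes from full faithfulness of the induced $G'\colon\cD'\to\cC'$ together with the observation that every morphism of $\cC'$ is a composite of morphisms from the image of $G'$, the image of $L^{\cC}$, and inverses of the latter. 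The two proofs share their essential mechanisms: your transport lemma is the paper's natural isomorphism $\phi$, and your generation lemma plays the role of the paper's uniqueness step. What each buys: the paper's version sidesteps all size and universe bookkeeping, since it only ever manipulates hom-sets of the assumed localization $\cD'$; yours is more symmetric and isolates cleanly the one genuinely delicate point --- promoting $\bar G\bar F\circ L_{\cC}\cong L_{\cC}$ to $\bar G\bar F\cong I_{\cC'}$ --- at the cost of having to justify that existence of an honest localization is equivalent to local smallness of the zig-zag category, a point you flag and handle adequately but which the paper's construction renders moot.
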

\begin{proof}
Suppose that a localization of $\cD$ with respect to $\cW$ exists and write $L\colon\cD\to \cD'$ for such a localization. Then we define a category $\cC'$ with the same objects as $\cC$ and morphism sets 
$\cC'(C_1,C_2)=\cD'(LF(C_1),LF(C_2))$. The composition in $\cC'$ is inherited from the composition in 
$\cD'$. We claim that the canonical functor $L^{\cC}\colon \cC\to \cC'$, which is the identity on objects and given by $LF$ on morphisms, is a localization of $\cC$ with respect to $\cV$. Thus, consider a category $\cE$ and a functor $H\colon \cC\to \cE$ that maps the morphisms in $\cV$ to isomorphisms in $\cE$. We must define a functor $H'\colon \cC'\to \cE$ such that $H=H'\circ L^{\cC}$, and it is clear that we must have $H'(C)=H(C)$ for all objects $C$ in $\cC'$. In order to define the action on morphisms, we factor the composite functor 
$K=H\circ G\colon \cD\to \cE$ over the localization of $\cD'$ to get a functor $K'\colon\cD'\to \cE$. The relation $G\circ F\simeq_{\cV}I_{\cC}$ gives a natural isomorphism $\phi_C\colon HGF(C)\to H(C)$ and we define the action of $H'$ on the morphism set $\cC'(C_1,C_2)$ to be the composition
\[
\cD'(LF(C_1),LF(C_2))\xr{K'}\cE(HGF(C_1),HGF(C_2))\xr{\phi_{C_2}\circ(-)\circ \phi_{C_1}^{-1}}
\cE(H(C_1),H(C_2)).
\] 
It is immediate from the definition that $H'$ satisfies the required conditions and it remains to show that it is uniquely determined. The composition $L^{\cC}\circ G$ factors over $\cD'$ to give the left hand square in the commutative diagram
\[
\xymatrix@-1pt{
\cD\ar[r]^G \ar[d]_L &  \cC \ar[r]^F \ar[d]_{L^{\cC}}& \cD \ar[d]_L\\
\cD' \ar[r]^{G'} & \cC' \ar[r] & \cD'.
}
\]
Notice that the relation $F\circ G\simeq_{\cW} I_{\cD}$ gives a natural isomorphism relating the composition $L\circ F\circ G$ to $L$. Let $\mathcal J$ be the category with objects $0$ and $1$, and two non-identity arrows $i\colon 0\to 1$ and $j\colon 1\to 0$. (Thus, $\mathcal J$ is a groupoid with inverse isomorphisms $i$ and $j$.) Then we may interpret the natural isomorphism in question as a functor $\mathcal D\times \mathcal J\to \mathcal D'$, or, by adjointness, a functor $\mathcal D\to (\mathcal D')^{\mathcal J}$. The latter factors over $\cD'$ to give a natural isomorphism relating the composition in the bottom row of the diagram to the identity functor on $\cD'$. It follows that $G'\colon \cD'\to \cC'$ is fully faithful and consequently that $H'$ is uniquely determined on the full subcategory of $\cC'$ generated by objects of the form $G(D)$ for $D$ in $\cD$. Furthermore, the relation $G\circ F\simeq_{\cV}I_{\cC}$ implies that any morphism in $\cC'$ can be written as a composition of morphisms in this subcategory with morphisms in the image of $L^{\cC}$ and inverses of morphisms in the image of 
$L^{\cC}$. This shows that $H'$ is uniquely determined on the whole category $\cC'$. The last statement in the proposition is an immediate consequence. 
\end{proof}

\begin{example}\label{ex:FSV-example}
The work of Fiedorowicz-Stelzer-Vogt \cite{FSV} fits into this framework. Let $\sM$ be a $\Cat$-operad that is $\Sigma$-free in the sense that $\Sigma_k$ acts freely on $\sM(k)$ for all $k$. In \cite[C.2]{FSV} the authors define a functor $F\colon\text{$\nerve \sM$-$\cS$}\to 
\text{$\sM$-$\Cat$}$ and show that if $\sM$ satisfies a certain ``factorization condition'', then there are chains of weak equivalences $\nerve\circ F\simeq_wI$ and $F\circ \nerve \simeq_wI$ with $I$ the respective identity functors. By \cite[Lemma~8.12]{FSV} this applies in particular to the (unbraided) operad $\Br$ discussed in Section~\ref{subsec:operads}. It is well-known that the localization of $\nerve\Br$-$\cS$ with respect to the weak equivalences exists, since it can be realized as the homotopy category of a suitable model category. Thus, it follows from Proposition~\ref{prop:localization-equivalence} that also the localization of $\Br$-$\Cat$ with respect to the weak equivalences exists and that these localized categories are equivalent. This is shown in \cite[Proposition~7.4]{FSV} except that the discussion of Grothendieck universes and ``localization up to equivalence'' is not really needed in order to state this result.
\end{example}

\bibliographystyle{alpha}
%\bibliography{Bibliography}

\end{document}